\numberwithin{equation}{section}
\newcommand{\widebar}[1]{\mkern 1.5mu\overline{\mkern-1.5mu#1\mkern-1.5mu}\mkern 1.5mu}
\DeclareMathOperator*{\argmax}{arg\,max}
\DeclareMathOperator*{\argmin}{arg\,min}
\newcommand{\indicate}[1]{\mathbf{1}_{\{#1\}}}
\newtheorem*{thma}{Theorem (Convergence)}
\theoremstyle{definition}
\newtheorem{assumption}{Assumption}[section]
\newtheorem{defn}{Definition}[section]
\def\Rmax{R_{\text{\textnormal{max}}}}
\def\Vmax{V_{\text{\textnormal{max}}}}
\begin{document}
\title[An ADP Algorithm for Monotone Value Functions]{An Approximate Dynamic Programming Algorithm for Monotone Value Functions}


\begin{abstract}
Many sequential decision problems can be formulated as Markov Decision Processes (MDPs) where the optimal value function (or \emph{cost--to--go} function) can be shown to satisfy a monotone structure in some or all of its dimensions. When the state space becomes large, traditional techniques, such as the backward dynamic programming algorithm (i.e., backward induction or value iteration), may no longer be effective in finding a solution within a reasonable time frame, and thus we are forced to consider other approaches, such as approximate dynamic programming (ADP). We propose a provably convergent ADP algorithm called \emph{Monotone--ADP} that exploits the monotonicity of the value functions in order to increase the rate of convergence. In this paper, we describe a general finite--horizon problem setting where the optimal value function is monotone, present a convergence proof for Monotone--ADP under various technical assumptions, and show numerical results for three application domains: \emph{optimal stopping}, \emph{energy storage/allocation}, and \emph{glycemic control for diabetes patients}. The empirical results indicate that by taking advantage of monotonicity, we can attain high quality solutions within a relatively small number of iterations, using up to two orders of magnitude less computation than is needed to compute the optimal solution exactly.
\end{abstract}

\author[Jiang and Powell]{Daniel R. Jiang$^*$ and Warren B. Powell$^*$}
\address{$^*$Department of Operations Research and Financial Engineering, Princeton University}

\maketitle

\section{Introduction}
\label{sec:intro}
Sequential decision problems are an important concept in many fields, including operations research, economics, and finance. For a small, tractable problem, the \emph{backward dynamic programming} (BDP) algorithm (also known as \emph{backward induction} or \emph{finite--horizon value iteration}) can be used to compute the optimal value function, from which we get an optimal decision making policy \citep{Puterman}. However, the state space for many real--world applications can be immense, making this algorithm very computationally intensive. Hence, we often must turn to the field of approximate dynamic programming, which seeks to solve these problems via approximation techniques. One way to obtain a better approximation is to exploit (problem--dependent) structural properties of the optimal value function, and doing so often accelerates the convergence of ADP algorithms. In this paper, we consider the case where the optimal value function is monotone with respect to a partial order. Although this paper focuses on the theory behind our ADP algorithm and not a specific application, we first point out that our technique can be broadly utilized.  Monotonicity is a very common property, as it is true in many situations that ``more is better.'' To be more precise, problems that satisfy \emph{free disposal} (to borrow a term from economics) or \emph{no holding costs} are likely to contain monotone structure. There are also less obvious ways that monotonicity can come into play, such as environmental variables that influence the stochastic evolution of a primary state variable (e.g., extreme weather can lead to increased expected travel times; high natural gas prices can lead to higher electricity spot prices). The following list is a small sample of real--world applications spanning the literature of the aforementioned disciplines (and their subfields) that satisfy the special property of monotone value functions.
\begin{description}[labelindent=2\parindent]

\item[Operations Research]\hfill
\begin{itemize}
\item The problem of optimal replacement of machine parts is well--studied in the literature (see e.g., \cite{Feldstein1974}, \cite{Pierskalla1976}, and \cite{Rust1987}) and can be formulated as a regenerative optimal stopping problem in which the value function is monotone in the current health of the part and the state of its environment. Section \ref{sec:numerical} discusses this model and provides detailed numerical results.
\item The problem of batch servicing of customers at a service station as discussed in \cite{Papadaki2002} features a value function that is monotone in the number of customers. Similarly, the related problem of multiproduct batch dispatch studied in \cite{Papadaki2003} can be shown to have a monotone value function in the multidimensional state variable that contains the number of products awaiting dispatch.
\end{itemize}
\item[Energy]\hfill
\begin{itemize}
\item In the energy storage and allocation problem, one must optimally control a storage device that interfaces with the spot market and a stochastic energy supply (such as wind or solar). The goal is to reliably satisfy a possibly stochastic demand in the most profitable way. We can show that without holding costs, the value function is monotone in the resource (see \cite{Scott2012} and \cite{Salas2013}). Once again, refer to Section \ref{sec:numerical} for numerical work in this problem class.
\item The value function from the problem of maximizing revenue using battery storage while bidding hourly in the electricity market can be shown to satisfy monotonicity in the resource, bid, and remaining battery lifetime (see \cite{Jiang2013a}).
\end{itemize}
\item[Healthcare]\hfill
\begin{itemize}
\item \cite{Hsih2010} develops a model for optimal dosing applied to glycemic control in diabetes patients. At each decision epoch, one of several treatments (e.g., sensitizers, secretagogues, alpha--glucosidase inhibitors, or peptide analogs) with varying levels of ``strength'' (i.e., ability to decrease glucose levels) but also varying side--effects, such as weight gain, needs to be administered. The value function in this problem is monotone whenever the utility function of the state of health is monotone. See Section \ref{sec:numerical} for the complete model and numerical results.
\item Statins are often used as treatment against heart disease or stroke in diabetes patients with lipid abnormalities. The optimal time for statin initiation, however, is a difficult medical problem due to the competing forces of health benefits and side effects. \cite{Kurt2011} models the problem as an MDP with a value function monotone in a risk factor known as the lipid--ratio (LR).
\end{itemize}
\item[Finance]\hfill
\begin{itemize}
\item The problem of mutual fund cash balancing, described in \cite{Nascimento2010}, is faced by fund managers who must decide on the amount of cash to hold, taking into account various market characteristics and investor demand. The value functions turn out to be monotone in the interest rate and the portfolio's rate of return.
\item The pricing problem for American options (see \cite{Luenberger1998}) uses the theory of optimal stopping and depending on the model of the price process, monotonicity can be shown in various state variables: for example, the current stock price or the volatility (see \cite{Ekstrom2004}).
\end{itemize}
\item[Economics]\hfill
\begin{itemize}
\item \cite{Kaplan2014} models the decisions of consumers after receiving fiscal stimulus payments in order to explain observed consumption behavior. The household has both liquid and illiquid assets (the state variable), in which the value functions are clearly monotone.
\item A classical model of search unemployment in economics describes a situation where at each period, a worker has a decision of accepting a wage offer or continuing to search for employment. The resulting value functions can be shown to be increasing with wage (see Section 10.7 of \cite{Stockey1989} and \cite{McCall1970}).
\end{itemize}
\end{description}

This paper makes the following contributions. We describe and prove the convergence of an algorithm, called \emph{Monotone--ADP (M--ADP)} for learning monotone value functions by preserving monotonicity after each update. We also provide empirical results for the algorithm in the context of various applications in operations research, energy, and healthcare as experimental evidence that exploiting monotonicity dramatically improves the rate of convergence. The performance of \emph{Monotone--ADP} is compared to several established algorithms: kernel--based reinforcement learning \citep{Ormoneit2002}, approximate policy iteration \citep{Bertsekas2011}, asynchronous value iteration \citep{Bertsekas2007}, and $Q$--learning \citep{Watkins1992}.

The paper is organized as follows. Section \ref{sec:litreview} gives a literature review, followed by the problem formulation and algorithm description in Section \ref{sec:mathform} and Section \ref{sec:algorithm}. Next, Section \ref{sec:assumptions} provides the assumptions necessary for convergence and Section \ref{sec:convergence} states and proves the convergence theorem, with several proofs of lemmas and propositions postponed until Appendix \ref{sec:appendix}. Section \ref{sec:numerical} describes numerical experiments over a suite of problems, with the largest one having a seven dimensional state variable and nearly 20 million states per time period. We conclude in Section \ref{sec:conclusion}.

\section{Literature Review}
\label{sec:litreview}
General monotone functions (not necessarily a value function) have been extensively studied in the academic literature. The statistical estimation of monotone functions is known as \emph{isotonic} or \emph{monotone} regression and has been studied as early as 1955; see \cite{Ayer1955} or \cite{Brunk1955}. The main idea of isotonic regression is to minimize a weighted error under the constraint of monotonicity (see \cite{Barlow} for a thorough description). The problem can be solved in a variety of ways, including the \emph{Pool Adjacent Violators Algorithm (PAVA)} described in \cite{Ayer1955}. More recently, \cite{Mammen1991} builds upon this previous research by describing an estimator that combines kernel regression and PAVA to produce a smooth regression function. Additional studies from the statistics literature include: \cite{Mukerjee1988}, \cite{Ramsay1998}, \cite{Dette2006}. Although these approaches are outside the context of dynamic programming, the fact that they were developed and well--studied highlights the pertinence of monotone functions.

From the operations research literature, monotone value functions and conditions for monotone optimal policies are broadly described in \cite{Puterman} [Section 4.7] and some general theory is derived therein. Similar discussions of the topic can be found in \cite{Ross1983}, \cite{Stockey1989}, \cite{Muller1997}, and \cite{Smith2002}. The algorithm that we describe in this paper is first used in \cite{Papadaki2002} as a heuristic to solve the \emph{stochastic batch service problem}, where the value function is monotone. However, the convergence of the algorithm is not analyzed and the state variable is scalar. Finally, in \cite{Papadaki}, the authors prove the convergence of the \emph{Discrete On--Line Monotone Estimation} (DOME) algorithm, which takes advantage of a monotonicity preserving step to iteratively estimate a discrete monotone function. DOME, though, was not designed for dynamic programming and the proof of convergence requires independent observations across iterations, which is an assumption that cannot be made for Monotone--ADP.

Another common property of value functions, especially in resource allocation problems, is convexity/concavity. Rather than using a monotonicity preserving step as Monotone--ADP does, algorithms such as the \emph{Successive Projective Approximation Routine (SPAR)} of \cite{Powell2004}, the \emph{Lagged Acquisition ADP Algorithm} of \cite{Nascimento2009a}, and the \emph{Leveling Algorithm} of \cite{Topaloglu2003} use a concavity preserving step, which is the same as maintaining monotonicity in the slopes. The proof of convergence for our algorithm, Monotone--ADP, uses ideas found in \cite{Tsitsiklis1994a} (later also used in \cite{Bertsekas1996}) and \cite{Nascimento2009a}. Convexity has also been exploited successfully in multistage linear stochastic programs (see, e.g, \cite{Birge1985}, \cite{Pereira1991a}, \cite{Asamov2015}). In our work, we take as inspiration the value of convexity demonstrated in the literature and show that monotonicity is another important structural property that can be leveraged in an ADP setting.

\section{Mathematical Formulation}
\label{sec:mathform}
We consider a generic problem with a time--horizon,  $t=0, 1, 2, \ldots, T$. Let $\mathcal S$ be the state space under consideration, where $|S| < \infty$, and let $\mathcal A$ be the set of actions or decisions available at each time step. Let $S_t \in \mathcal S$ be the random variable representing the state at time $t$ and $a_t \in \mathcal A$ be the action taken at time $t$. For a state $S_t \in \mathcal S$ and an action $a_t \in \mathcal A$, let $C_t(S_t,a_t)$ be a contribution or reward received in period $t$ and $C_T(S_T)$ be the terminal contribution. Let $A_t^\pi:\mathcal S \rightarrow \mathcal A$ be the decision function at time $t$ for a policy $\pi$ from the class $\Pi$ of all admissible policies. Our goal is to maximize the expected total contribution, giving us the following objective function:
\begin{equation*}
\sup_{\pi \in \Pi}\; \mathbf{E}\left [ \sum_{t=0}^{T-1} C_t \big(S_t,A_t^\pi(S_t) \big) + C_T(S_T)\right],
\end{equation*}
where we seek a policy to choose the actions $a_t$ sequentially based on the states $S_t$ that we visit. Let $(W_t)_{t=0}^T$ be a discrete time stochastic process that  encapsulates all of the randomness in our problem; we call it the \emph{information process}. Assume that $W_t \in \mathcal W$ for each $t$ and that there exists a state transition function $f: \mathcal S \times \mathcal A \times \mathcal W \rightarrow \mathcal S$ that describes the evolution of the system. Given a current state $S_t$, an action $a_t$, and an outcome of the information process $W_{t+1}$, the next state is given by
\begin{equation}
S_{t+1} = f(S_t,a_t,W_{t+1}).
\label{transitionf}
\end{equation}

Let $s \in \mathcal S$. The optimal policy can be expressed through a set of optimal value functions using the well--known Bellman's equation:
\begin{equation}
\begin{aligned}
&V^*_t(s) =\sup_{a \in \mathcal A} \Bigl [ C_t(s,a)+\textbf{E}\bigl[ V^*_{t+1}(S_{t+1}) \,| \, S_t=s, \, a_t=a \bigr] \Bigr ] \text{ for } t=0,1,2,\ldots,T-1,\\
&V^*_{T}(s) = C_T(s),
\end{aligned}
\label{bellmangen}
\end{equation}
with the understanding that $S_{t+1}$ transitions from $S_t$ according to (\ref{transitionf}). In many cases, the terminal contribution function $C_T(S_T)$ is zero.
Suppose that the state space $\mathcal S$ is equipped with a partial order, denoted $\preceq$, and the following monotonicity property is satisfied for every $t$:
\begin{equation}
s \preceq s' \quad \Longrightarrow \quad V_t^*(s) \le V_t^*(s').
\label{monogen}
\end{equation}
In other words, the optimal value function $V_t^*$ is \emph{order--preserving} over the state space $\mathcal S$. In the case where the state space is multidimensional (see Section \ref{sec:numerical} for examples), a common example of $\preceq$ is \emph{componentwise inequality}, which we henceforth denote using the traditional $\le$.

A second example that arises very often is the following definition of $\preceq$, which we call the \emph{generalized componentwise inequality}. Assume that each state $s$ can be decomposed into $s = (m,i)$ for some $m \in \mathcal M$ and $i \in \mathcal I$. For two states $s = (m,i)$ and $s' = (m', i')$, we have
\begin{equation}
s \preceq s' \quad \Longleftrightarrow \quad m \le m', \; i = i'.
\label{eq:componentwise}
\end{equation}
In other words, we know that whenever $i$ is held constant, then the value function is monotone in the ``primary'' variable $m$. An example of when such a model would be useful is when $m$ represents the amount of some held resource that we are both buying and selling, while $i$ represents additional \emph{state-of-the-world} information, such as prices of related goods, transport times on a shipping network, or weather information. Depending on the specific model, the relationship between the value of $i$ and the optimal value function may be quite complex and a priori unknown to us. However, it is likely to be obvious that for $i$ held constant, the value function is increasing in $m$, the amount of resource that we own --- hence, the definition (\ref{eq:componentwise}). The following proposition is given in the setting of the generalized componentwise inequality and provides a simple condition that can be used to verify monotonicity in the value function.

\begin{restatable}{prop}{monocondone} Suppose that every $s \in \mathcal S$ can be written as $s = (m,i)$ for some $m \in \mathcal M$ and $i \in \mathcal I$ and let $S_t = (M_t,I_t)$ be the state at time $t$, with $M_t \in \mathcal M$ and $I_t \in \mathcal I$. Let the partial order $\preceq$ on the state space $\mathcal S$ be described by (\ref{eq:componentwise}). Assume the following assumptions hold. 
\begin{enumerate}[label=(\roman*),labelindent=1in]
\item For every $s, s' \in \mathcal S$ with $s \preceq s'$, $a \in \mathcal A$, and $w \in \mathcal W$, the state transition function satisfies
\[
f(s,a,w) \preceq f(s',a,w).
\]
\item For each $t < T$, $s, s' \in \mathcal S$ with $s \preceq s'$, and $a \in \mathcal A$,
\begin{equation*}
C_t(s,a) \le C_t(s',a) \, \mbox{ and } \, C_T(s) \le C_T(s').
\end{equation*}
\item For each $t < T$, $M_t$ and $W_{t+1}$ are independent.
\end{enumerate}
Then, the value functions $V_t^*$ satisfy the monotonicity property of (\ref{monogen}).
\label{mono_cond_one}
\end{restatable}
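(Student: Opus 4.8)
The plan is to argue by backward induction on the time index $t$, moving from the terminal period $t = T$ down to $t = 0$. The base case is immediate: since $V_T^*(s) = C_T(s)$, assumption (ii) gives $V_T^*(s) = C_T(s) \le C_T(s') = V_T^*(s')$ whenever $s \preceq s'$. For the inductive step, I would assume that $V_{t+1}^*$ is order--preserving and aim to show the same for $V_t^*$ by comparing the two sides of Bellman's equation (\ref{bellmangen}).

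Fix two states $s \preceq s'$. Writing $s = (m,i)$ and $s' = (m',i')$, the definition (\ref{eq:componentwise}) of the generalized componentwise inequality tells us that $m \le m'$ and, crucially, $i = i'$; the informational components coincide. I would then fix an arbitrary action $a \in \mathcal A$ and compare the two quantities inside the supremum of (\ref{bellmangen}) term by term. The contribution terms satisfy $C_t(s,a) \le C_t(s',a)$ directly by assumption (ii). For the expectation terms, assumption (i) gives $f(s,a,w) \preceq f(s',a,w)$ for every outcome $w \in \mathcal W$, and the inductive hypothesis then yields the pointwise inequality
\[
V_{t+1}^*\bigl(f(s,a,w)\bigr) \le V_{t+1}^*\bigl(f(s',a,w)\bigr) \quad \text{for all } w \in \mathcal W.
\]

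The delicate point---and what I expect to be the main obstacle---is converting this pointwise inequality on the integrand into an inequality between the two conditional expectations, since they are a priori taken against different conditioning events ($S_t = s$ versus $S_t = s'$). This is exactly where assumption (iii) enters. Because $M_t$ and $W_{t+1}$ are independent, the conditional law of $W_{t+1}$ cannot depend on the primary component $m$; it can depend only on the informational component $i$. Since $s$ and $s'$ share the same $i$, the conditional distribution of $W_{t+1}$ given $S_t = s$ coincides with that given $S_t = s'$. Consequently both expectations are integrals against one common measure, and the pointwise inequality above integrates to
\[
\mathbf{E}\bigl[V_{t+1}^*(S_{t+1}) \mid S_t = s, \, a_t = a\bigr] \le \mathbf{E}\bigl[V_{t+1}^*(S_{t+1}) \mid S_t = s', \, a_t = a\bigr].
\]

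Combining the two term--by--term bounds shows that for every fixed $a$, the bracketed expression in (\ref{bellmangen}) evaluated at $s$ is no larger than the corresponding expression evaluated at $s'$. Taking the supremum over $a \in \mathcal A$ preserves the inequality: each term for $s$ is dominated by the corresponding term for $s'$, which is in turn at most $V_t^*(s')$, so $V_t^*(s') $ is an upper bound for the family indexed by $a$ at the state $s$. Hence $V_t^*(s) \le V_t^*(s')$, which closes the induction and establishes the monotonicity property (\ref{monogen}) for all $t$.
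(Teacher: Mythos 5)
Your proof is correct and follows essentially the same route as the paper's: backward induction on $t$, with assumption (iii) used---exactly as in the paper---to identify the conditional law of $W_{t+1}$ given $S_t = s$ with that given $S_t = s'$ (both reduce to conditioning on the shared informational component $i = i'$), so that the pointwise inequality obtained from (i) and the inductive hypothesis integrates against a single common measure. The paper phrases this step as replacing the conditioning event $S_t = (m,i)$ by $I_t = i$ and invoking monotonicity of conditional expectation, but the substance is identical to yours.
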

\begin{proof}
See Appendix \ref{sec:appendix}.
\end{proof}
There are other similar ways to check for monotonicity; for example, see Proposition 4.7.3 of \cite{Puterman} or Theorem 9.11 of \cite{Stockey1989} for conditions on the transition probabilities. We choose to provide the above proposition due to its relevance to our example applications in Section \ref{sec:numerical}.

The most traditional form of Bellman's equation has been given in (\ref{bellmangen}), which we refer to as the \emph{pre--decision state} version. Next, we discuss some alternative formulations from the literature that can be very useful for certain problem classes. A second formulation, called the $Q$--function (or \emph{state--action}) form Bellman's equation, is popular in the field of reinforcement learning, especially in applications of the widely used $Q$--learning algorithm (see \cite{Watkins1992}): 
\begin{equation}
\begin{aligned}
&Q^*_t(s,a) =  \textbf{E}\Bigl[C_t(s,a)+ \max_{a_{t+1} \in \mathcal A} Q_{t+1}^*(S_{t+1},a_{t+1}) \,| \, S_t = s, \, a_t = a \Bigr] \text{ for } t=0,1,2,\ldots,T-1,\\
&Q^*_{T}(s,a) = C_T(s),
\end{aligned}
\label{bellmanq}
\end{equation}
where we must now impose the additional requirement that $\mathcal A$ is a finite set. $Q^*$ is known as the \emph{state--action value function} and the ``state space'' in this case is enlarged to be $\mathcal S \times \mathcal A$.

A third formulation of Bellman's equation is in the context of \emph{post--decision states} (see \cite{Powell2011} for a detailed treatment of this important technique). Essentially, the post--decision state, which we denote $S_t^a$, represents the state after the decision has been made, but before the random information $W_{t+1}$ has arrived (the state--action pair is also a post--decision state). For example, in the simple problem of purchasing additional inventory $x_t$ to the current stock $R_t$ in order to satisfy a next--period stochastic demand, the post--decision state can be written as $R_t+x_t$, while the pre--decision state is $R_t$. It must be the case that $S_t^a$ contains the same information as the state--action pair $(S_t,a_t)$, meaning that regardless of whether we condition on $S_t^a$ or $(S_t,a_t)$, the conditional distribution of $W_{t+1}$ is the same. The attractiveness of this method is that 1) in certain problems, $S_t^a$ is of lower dimension than $(S_t,a_t)$ and 2) when writing Bellman's equation in terms of the post--decision state space (using a redefined value function), the supremum and the expectation are interchanged, giving us some computational advantages. Let $s^a$ be a post--decision state from the post--decision state space $\mathcal S^a$. Bellman's equation becomes
\begin{equation}
\begin{aligned}
&V^{a,*}_t(s^a) =  \mathbf{E}\left[  \sup_{a \in \mathcal A} \, \Bigl[ C_{t+1}(S_{t+1},a)+ V_{t+1}^{a,*}(S_{t+1}^a)\Bigr] \, \Bigl | \, S_t^a = s^a  \right] \text{ for } t=0,1,2,\ldots,T-2,\\
&V^{a,*}_{T-1}(s^a) = \mathbf{E} \bigl[ C_T(S_{T}) \, | \, S_{T-1}^a = s^a \bigr],
\end{aligned}
\label{bellmanpostdec}
\end{equation}
where $V^{*,a}$ is known as the \emph{post--decision value function}.
In approximate dynamic programming, the original Bellman's equation formulation (\ref{bellmangen}) can be used if the transition probabilities are known. When the transition probabilities are unknown, we must often rely purely on \emph{experience} or some form of \emph{black box simulator}. In these situations, formulations (\ref{bellmanq}) and (\ref{bellmanpostdec}) of Bellman's equation, where the optimization is within the expectation, become extremely useful. For the remainder of this paper, rather than distinguishing between the three forms of the value function ($V^*$, $Q^*$, and $V^{a,*}$), we simply use $V^*$ and call it the \emph{optimal value function}, with the understanding that it may be replaced with any of the definitions. Similarly, to simplify notation, we do not distinguish between the three forms of the state space ($\mathcal S$, $\mathcal S \times \mathcal A$, and $\mathcal S^a$) and simply use $\mathcal S$ to represent the domain of the value function (for some $t$).

Let $d = |\mathcal S|$ and $D = (T+1) \, |\mathcal S|$. We view the optimal value function as a vector in $\mathbb R^D$, that is to say, $V^* \in \mathbb R^D$ has a component at $(t,s)$ denoted as $V_t^*(s)$. Moreover, for a fixed $t \le T$, the notation $V_t^* \in \mathbb R^d$ is used to describe $V^*$ restricted to $t$, i.e., the components of $V_t^*$ are $V_t^*(s)$ with $s$ varying over $\mathcal S$. We adopt this notational system for arbitrary value functions $V \in \mathbb R^D$ as well. Finally, we define the \emph{generalized dynamic programming operator} $H : \mathbb R^D \rightarrow \mathbb R^D$, which applies the right hand sides of either (\ref{bellmangen}), (\ref{bellmanq}), or (\ref{bellmanpostdec}) to an arbitrary $V \in \mathbb R^D$, i.e., replacing $V_t^*$, $Q_t^*$, and $V_t^{*.a}$ with $V_t$. For example, if $H$ is defined in the context of (\ref{bellmangen}), then the component of $HV$ at $(t,s)$ is given by
\begin{equation}
(HV)_t(s) = \left\{
	\begin{array}{ll}
		 \sup_{a \in \mathcal A} \Bigl [ C_t(s,a)+\mathbf{E} \bigl[ V_{t+1}(S_{t+1})\,|\,S_t = s, \, a_t=a \bigr] \Bigr ] & \mbox{for } t=0,1,2,\ldots,T-1, \vspace{.6em} \\
		 C_T(s) & \mbox{for } t=T.
	\end{array}
\right.
\label{Hdef}
\end{equation}
For (\ref{bellmanq}) and (\ref{bellmanpostdec}), $H$ can be defined in an analogous way. We now state a lemma concerning useful properties of $H$. Parts of it are similar to Assumption 4 of \cite{Tsitsiklis1994a}, but we can show that these statements always hold true for our more specific problem setting, where $H$ is a generalized dynamic programming operator.
\begin{restatable}{lem}{Hprops}
The following statements are true for $H$, when it is defined using (\ref{bellmangen}), (\ref{bellmanq}), or (\ref{bellmanpostdec}).
\begin{enumerate}[label=(\roman*),labelindent=1in]
\item $H$ is monotone, i.e., for $V,\,V' \in \mathbb R^D$ such that $V \le V'$, we have that $HV \le HV'$ (componentwise).
\item For any $t < T$, let $V,\,V' \in \mathbb R^D$, such that $V_{t+1} \le V_{t+1}'$. It then follows that $(HV)_t \le (HV')_t$.
\item The optimal value function $V^*$ uniquely satisfies the fixed point equation $HV = V$.
\item Let $V \in \mathbb R^D$ and $e$ is a vector of ones with dimension $D$. For any $\eta > 0$,
\[HV-\eta e \le H(V-\eta e) \le H(V+\eta e) \le HV+\eta e.
\]
\end{enumerate}
\label{Hprops}
\end{restatable}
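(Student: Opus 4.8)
The plan is to exploit the common structural template shared by all three definitions of $H$ in (\ref{bellmangen}), (\ref{bellmanq}), and (\ref{bellmanpostdec}). In each case, for a non-terminal stage $t$ the component $(HV)_t$ is obtained from $V_{t+1}$ alone --- never from $V_t$ --- by composing three order-preserving operations: taking a conditional expectation, taking a supremum (or maximum) over $a \in \mathcal{A}$, and adding a deterministic contribution term. At the terminal stage ($t = T$, or $t = T-1$ for the post-decision form), $(HV)_t$ is a constant that does not depend on $V$ at all. I would state these two observations once and then verify each of the four claims against this template, so that the three operator forms need not be handled separately.

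For (ii), I would fix $t < T$ and argue that since conditional expectation is monotone, $V_{t+1} \le V_{t+1}'$ yields $\mathbf{E}[V_{t+1}(S_{t+1}) \mid \cdot] \le \mathbf{E}[V_{t+1}'(S_{t+1}) \mid \cdot]$ pointwise; adding the common contribution and taking $\sup_a$ (both monotone) preserves the inequality, giving $(HV)_t \le (HV')_t$. Claim (i) then follows immediately: for each $t < T$, $V \le V'$ implies $V_{t+1} \le V_{t+1}'$, so (ii) applies, and at $t = T$ both sides equal the terminal contribution, hence $HV \le HV'$.

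For (iii), I would note that the fixed-point equation $HV = V$ read componentwise is exactly the Bellman recursion defining $V^*$, so $V^*$ is a solution; uniqueness comes from the staged structure, since $(HV)_t$ depends only on $V_{t+1}$, so backward induction forces $V_T = C_T$ and then determines each $V_t = (HV)_t$ uniquely, whence any fixed point coincides with $V^*$. For (iv), the middle inequality is immediate from (i) because $V - \eta e \le V + \eta e$. For the two outer inequalities I would use that $H$ commutes with constant shifts of $V_{t+1}$: since $\sup_a[\,\cdot + \eta\,] = \sup_a[\,\cdot\,] + \eta$ and the expectation of a constant returns that constant, one obtains the exact identity $(H(V \pm \eta e))_t = (HV)_t \pm \eta$ for every non-terminal $t$. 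At the terminal stage $H$ ignores its argument, so there $(H(V + \eta e))_T = (HV)_T \le (HV)_T + \eta$ and $(H(V - \eta e))_T = (HV)_T \ge (HV)_T - \eta$; combining the interior identities with these boundary inequalities yields the claimed chain.

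None of the four parts is deep; the only real care is bookkeeping. The main obstacle I anticipate is handling the boundary stage cleanly: the exact constant-shift identity used in (iv) holds only for non-terminal $t$, while at the terminal time $H$ is constant in its argument, so I must verify that the desired inequalities still go through there --- which they do, but only in the weaker direction dictated by $\eta > 0$. A secondary point is to confirm that the ``depends only on $V_{t+1}$'' property genuinely holds in all three forms, in particular for the post-decision form (\ref{bellmanpostdec}) whose recursion is indexed down to $T-1$ rather than $T$, before invoking the unified argument.
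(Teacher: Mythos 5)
Your proposal is correct and takes essentially the same route as the paper's (very terse) proof: monotonicity of the expectation and supremum operators for (i) and (ii), uniqueness of the finite-horizon fixed point via the staged/backward-induction structure for (iii), and the constant-shift equivariance of $H$ for (iv). You simply spell out the bookkeeping---including the terminal-stage boundary cases---that the paper leaves implicit.
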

\begin{proof}
See Appendix \ref{sec:appendix}.
\end{proof}


\section{Algorithm}
\label{sec:algorithm}
In this section, we formally describe the Monotone--ADP algorithm. Assume a probability space $(\Omega, \mathcal F, \mathbf P)$ and let $\widebar{V}^n$ be the approximation of $V^*$ at iteration $n$, with the random variable $S_t^n \in \mathcal S$ representing the state that is visited (by the algorithm) at time $t$ in iteration $n$. The observation of the optimal value function at time $t$, iteration $n$, and state $S_t^n$ is denoted $\hat{v}_t^n(S_t^n)$ and is calculated using the estimate of the value function from iteration $n-1$. The raw observation $\hat{v}_t^n(S_t^n)$ is then \emph{smoothed} with the previous estimate $\widebar{V}_t^{n-1}(S_t^n)$, using a stochastic approximation step, to produce the smoothed observation $z_t^n(S_t^n)$. Before presenting the description of the ADP algorithm, some definitions need to be given. We start with $\Pi_M$, the monotonicity preserving projection operator. Note that the term ``projection'' is being used loosely here; the space that we ``project'' onto actually changes with each iteration.
\begin{defn}
For $s^r \in \mathcal S$ and $z^r \in \mathbb R$, let $(s^r,z^r)$ be a \emph{reference point} to which other states are compared. Let $V_t \in \mathbb R^d$ and define the projection operator $\Pi_M : \mathcal S \times \mathbb R \times \mathbb R^d \rightarrow \mathbb R^d$, where the component of the vector $\Pi_M(s^r,z^r,V_t)$ at $s$ is given by 
\begin{equation}
\Pi_M\bigl(s^r,z^r,V_t\bigr)(s) = \left\{
  \begin{array}{ll}
    z^r & \mbox{if } s = s^r, \vspace{.6em} \\
    z^r \vee V_t(s)  & \mbox{if } s^r \preceq s,\;s \ne s^r,
\vspace{.6em} \\
    z^r \wedge V_t(s) & \mbox{if } s^r \succeq s,\;s \ne s^r,
\vspace{.6em} \\
  V_t(s) & \mbox{otherwise.}
  \end{array}
\right.
\label{genprojection]}
\end{equation}
\end{defn}
In the context of the Monotone--ADP algorithm, $V_t$ is the current value function approximation, $(s^r,z^r)$ is the latest observation of the value ($s^r$ is latest visited state), and $\Pi_M(s^r,z^r,V_t)$ is the updated value function approximation. Violations of the monotonicity property of (\ref{monogen}) are corrected by $\Pi_M$ in the following ways:
\begin{itemize}
\item if $z^r \ge V_t(s)$ and $s^r \preceq s$, then $V_t(s)$ is \emph{too small} and is increased to $z^r = z^r \vee V_t(s)$, and
\item if $z^r \le V_t(s)$ and $s^r \succeq s$, then $V_t(s)$ is \emph{too large} and is decreased to $z^r = z^r \wedge V_t(s)$. 
\end{itemize}
See Figure \ref{fig:projection} for an example showing a sequence of two observations and the resulting projections in the Cartesian plane, where $\preceq$ is the componentwise inequality in two dimensions. We now provide some additional motivation for the definition of $\Pi_M$. Because $z_t^n(S_t^n)$ is the \emph{latest} observed value and it is obtained via stochastic approximation (see the Step 2b of Figure \ref{alg:vfa2}), our intuition guides us to ``keep'' this value, i.e., by setting $\widebar{V}_t^n(S_t^n) = z_t^n(S_t^n)$. For $s \in \mathcal S$ and $v \in \mathbb R$, let us define the set 
\[
\mathcal V_{\mathcal M}(s,z) = \bigl \{ V \in \mathbb R^d : V(s) = z, \, V \mbox{ monotone over } \mathcal S \bigr \}
\] which fixes the value at $s$ to be $z$, while restricting to the set of all possible $V$ that satisfy the monotonicity property (\ref{monogen}). Now, to get the approximate value function of iteration $n$ and time $t$, we want to find $\widebar{V}_t^n$ that is close to $\widebar{V}_t^{n-1}$ but also satisfies the monotonicity property:
\begin{equation}
\widebar{V}_t^n \in \argmin \Bigl \{ \bigl \|V_t - \widebar{V}_t^{n-1} \bigr \|_2 : V_t \in \mathcal V_\mathcal{M}\bigl(S_t^n,z_t^n(S_t^n)\bigr) \Bigr\},
\label{prop:argmin}
\end{equation}
where $\| \cdot \|_2$ is the Euclidean norm. Let us now briefly pause and consider a possible alternative, where we do not require $\widebar{V}_t^n(S_t^n) = z_t^n(S_t^n)$. Instead, suppose we introduce a vector $\hat{V}_t^{n-1} \in \mathbb R^d$ such that $\hat{V}_t^{n-1}(s) = \widebar{V}_t^{n-1}(s)$ for $s \ne S_t^n$ and $\hat{V}_t^{n-1}(S_t^n) = z_t^n(S_t^n)$. Next, project $\hat{V}_t^{n-1}$ the space of vectors $V$ that are monotone over $\mathcal S$ to produce $\widebar{V}_t^n$ (this would be a proper projection, where the space does not change). The problem with this approach arises in the early iterations where we have poor estimates of the value function: for example, if $\widebar{V}_t^0(s) = 0$ for all $s$, then $\hat{V}_t^0$ is a vector of mostly zeros and the likely result of the projection, $\widebar{V}_t^1$, would be the \emph{original vector} $\widebar{V}_t^{0}$ --- hence, no progress is made. A potential explanation for the failure of such a strategy is that it is a naive adaptation of the natural approach for a batch framework to a recursive setting.

The next proposition shows that this representation of $\widebar{V}_t^n$ is equivalent to one that is obtained using the projection operator $\Pi_M$.
\begin{restatable}{prop}{Propproj}
The solution to the minimization (\ref{prop:argmin}) can be characterized using $\Pi_M$. Specifically,
\[
\Pi_M \bigl(S_t^n,z_t^n(S_t^n),\widebar{V}_t^{n-1}\bigr) \in \argmin \Bigl \{ \bigl \|V_t - \widebar{V}_t^{n-1} \bigr \|_2 : V_t \in \mathcal V_\mathcal{M}\bigl(S_t^n,z_t^n(S_t^n)\bigr) \Bigr\},
\]
so that we can write $\widebar{V}_t^n = \Pi_M \bigl(S_t^n,z_t^n(S_t^n),\widebar{V}_t^{n-1}\bigr)$.
\label{Propproj}
\end{restatable}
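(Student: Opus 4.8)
The plan is to fix notation by writing $s^r = S_t^n$, $z^r = z_t^n(S_t^n)$, and $W := \Pi_M\bigl(s^r, z^r, \widebar{V}_t^{n-1}\bigr)$, so that the minimization in (\ref{prop:argmin}) becomes a minimization of $\|V - \widebar{V}_t^{n-1}\|_2$ over competitors $V \in \mathcal V_\mathcal{M}(s^r, z^r)$. I would first record the standing fact, maintained inductively as an invariant of Monotone--ADP, that the incoming approximation $\widebar{V}_t^{n-1}$ is itself monotone over $\mathcal S$. This hypothesis is essential: since $\Pi_M$ only repairs monotonicity violations that involve the reference state $s^r$, a non-monotone input can in general remain non-monotone, so without it $W$ need not even be feasible.

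The first step is to verify feasibility, namely $W \in \mathcal V_\mathcal{M}(s^r, z^r)$. The equality $W(s^r) = z^r$ is immediate from the first line of (\ref{genprojection]}). Monotonicity of $W$ I would establish through a case analysis: given $s \preceq s'$, I split according to the positions of $s$ and $s'$ relative to $s^r$ (strictly above, equal to, strictly below, or incomparable). When both are comparable to $s^r$ on the same side, the conclusion follows because $\widebar{V}_t^{n-1}$ is monotone and both $z^r \vee (\cdot)$ and $z^r \wedge (\cdot)$ are order preserving; the straddling case $s \preceq s^r \preceq s'$ uses $W(s) \le z^r \le W(s')$; and every mixed case in which one endpoint is incomparable to $s^r$ reduces, by transitivity of $\preceq$, either to a previous case or to an inequality of the form $W(s) = \widebar{V}_t^{n-1}(s) \le \widebar{V}_t^{n-1}(s') \le W(s')$ (and several combinations are vacuous). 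I expect this step to be the main obstacle, as it carries the most bookkeeping and is precisely where the monotonicity of $\widebar{V}_t^{n-1}$ must be used.

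The second step is optimality, which I would prove in the strong coordinatewise form
\[
\bigl(W(s) - \widebar{V}_t^{n-1}(s)\bigr)^2 \le \bigl(V(s) - \widebar{V}_t^{n-1}(s)\bigr)^2 \qquad \text{for every } V \in \mathcal V_\mathcal{M}(s^r, z^r) \text{ and every } s \in \mathcal S.
\]
The key observation is that feasibility forces $V(s) \ge V(s^r) = z^r$ whenever $s^r \preceq s$ and $V(s) \le z^r$ whenever $s \preceq s^r$. For $s$ incomparable to $s^r$, and for $s = s^r$, the left-hand side is zero and the inequality is trivial. For $s$ strictly above $s^r$ with $\widebar{V}_t^{n-1}(s) < z^r$ we have $W(s) = z^r$ and $V(s) - \widebar{V}_t^{n-1}(s) \ge z^r - \widebar{V}_t^{n-1}(s) = W(s) - \widebar{V}_t^{n-1}(s) > 0$, so squaring preserves the inequality; the symmetric argument disposes of $s$ strictly below $s^r$ with $\widebar{V}_t^{n-1}(s) > z^r$, and in the remaining subcases $W(s) = \widebar{V}_t^{n-1}(s)$ makes the left-hand side zero. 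Summing over $s$ yields $\|W - \widebar{V}_t^{n-1}\|_2 \le \|V - \widebar{V}_t^{n-1}\|_2$ for every feasible $V$; combined with the feasibility of $W$ from the first step, this gives $W \in \argmin\bigl\{ \|V - \widebar{V}_t^{n-1}\|_2 : V \in \mathcal V_\mathcal{M}(s^r, z^r)\bigr\}$, which is the assertion.
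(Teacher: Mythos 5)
Your proof is correct and takes essentially the same route as the paper's: the paper likewise establishes the strong coordinatewise inequality --- that $\bigl|\Pi_M\bigl(S_t^n,z_t^n(S_t^n),\widebar{V}_t^{n-1}\bigr)(s)-\widebar{V}_t^{n-1}(s)\bigr|^2 \le \bigl|\tilde{V}_t(s)-\widebar{V}_t^{n-1}(s)\bigr|^2$ for every feasible $\tilde{V}_t$ and every $s \in \mathcal S$ --- via the same four-way case split on the position of $s$ relative to $S_t^n$ (incomparable, equal, above, below), with the same key observation that feasibility forces $\tilde{V}_t(s) \ge z_t^n(S_t^n)$ when $s \succeq S_t^n$ and $\tilde{V}_t(s) \le z_t^n(S_t^n)$ when $s \preceq S_t^n$. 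Two remarks. First, your explicit feasibility step is one the paper skips: its proof only compares against feasible points and never verifies that the projected vector is itself monotone with the prescribed value at $S_t^n$, implicitly relying on the invariant asserted in Section \ref{sec:algorithm} that $\Pi_M$ applied to a monotone $\widebar{V}_t^{n-1}$ yields a monotone output; since membership in the argmin requires feasibility, your version is the more complete one, and you are right that monotonicity of $\widebar{V}_t^{n-1}$ is precisely the hypothesis that makes it go through. Second, a small slip in your optimality step: for $s = S_t^n$ the left-hand side is not zero in general --- it equals $\bigl(z_t^n(S_t^n)-\widebar{V}_t^{n-1}(S_t^n)\bigr)^2$ --- but the case remains trivial because every feasible $V$ satisfies $V(S_t^n)=z_t^n(S_t^n)$, so the two sides coincide exactly; this is how the paper disposes of that case.
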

\begin{proof}
See Appendix \ref{sec:appendix}.
\end{proof}

\begin{figure}[h]
	\begin{center}
	\includegraphics[scale=.4]{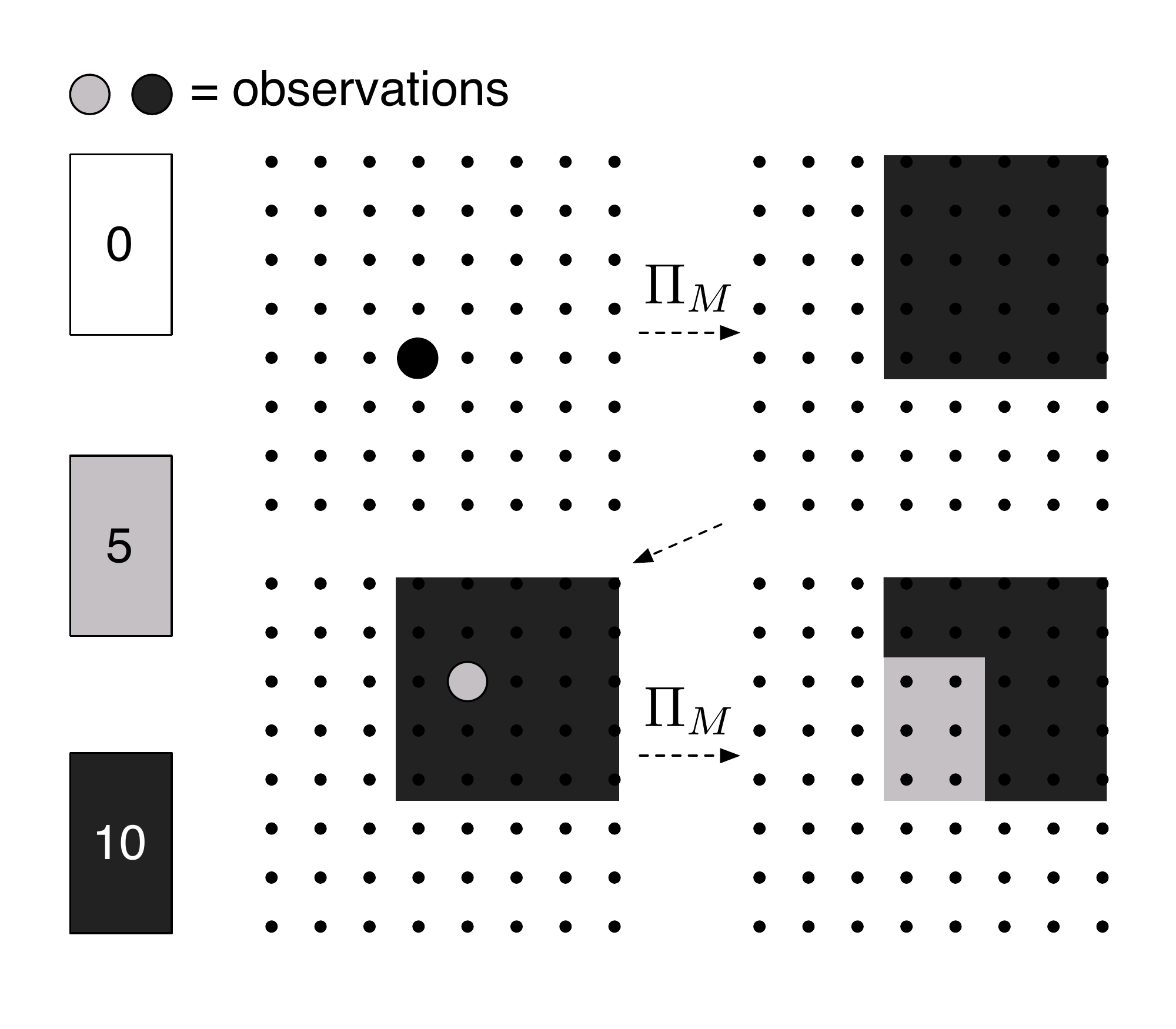}\\
	\end{center}
	\caption{Example Illustrating the Projection Operator $\Pi_M$}
	\label{fig:projection}
\end{figure}

We now introduce, for each $t$, a (possibly stochastic) stepsize sequence $\alpha_t^n \le 1$ used for smoothing in new observations. The algorithm only directly updates values (i.e., not including updates from the projection operator) for states that are visited, so for each $s \in \mathcal S$, let
\begin{equation*}
\alpha_t^n(s) = \alpha_t^{n-1} \, \textbf{1}_{\{s=S_t^n\}}.
\end{equation*}
Let $\hat{v}_t^n \in \mathbb R^d$ be a noisy observation of the quantity $(H\widebar{V}^{n-1})_t$, and let $w_t^n \in \mathbb R^d$ represent the additive noise associated with the observation:
\begin{equation*}
\hat{v}_t^n  = \bigl(H\widebar{V}^{n-1}\bigr)_t + w_t^n.
\end{equation*} 
Although the algorithm is asynchronous and only updates the value for $S_t^n$ (therefore, it only needs $\hat{v}_t^n(S_t^n)$, the component of $\hat{v}_t^n$ at $S_t^n$), it is convenient to assume $\hat{v}_t^n(s)$ and $w_t^n(s)$ are defined for all $s$. We also require a vector $z_t^n \in \mathbb R^d$ to represent the ``smoothed observation'' of the future value, i.e., $z_t^n(s)$ is $\hat{v}_t^n(s)$ smoothed with the previous value $\widebar{V}_t^{n-1}(s)$ via the stepsize $\alpha_t^n(s)$.
Let us denote the history of the algorithm up until iteration $n$ by the filtration $\{\mathcal F^n\}_{n \ge 1}$, where
\begin{equation*}
\mathcal F^n = \sigma \bigl\{(S_{t}^m,\; w_{t}^m)_{\; m\le n,\; t \le T}\bigr\}.
\end{equation*}
A precise description of the algorithm is given in Figure \ref{alg:vfa2}.
\begin{figure}[h] 
\mbox{}\hrulefill\mbox{}
\begin{description}[leftmargin=5.2em,style=nextline]
    \item[Step 0a.]Initialize $\widebar{V}_t^0 \in [0,\Vmax]$ for each $t \le T-1 $ such that monotonicity is satisfied within $\widebar{V}_t^0$, as described in (\ref{monogen}).
    \vspace{.5em}
    \item[Step 0b.]Set $\widebar{V}_{T}^n(s) = C_T(s)$ for each $s \in \mathcal{S}$ and $n \le N$.
    \vspace{.5em}
    \item[Step 0c.]Set $n=1$.
    \vspace{.5em}    
    \item[Step 1.]Select an initial state $S_0^n$.
    \vspace{.5em}
    \item[Step 2.]For $t=0,1, \ldots, (T-1)$:
  	\vspace{.5em}    
    \begin{description}[leftmargin=5.2em,style=nextline]
    	\item[Step 2a.] Sample a noisy observation of the future value:\\
    	\vspace{.5em}
    	\quad $\hat{v}_t^n= \bigl(H\widebar{V}^{n-1}\bigr)_t+w_t^n$.
    	\vspace{.5em}
    	\item[Step 2b.] Smooth in the new observation with previous value at each $s$:\\
    	\vspace{.5em}
    	\quad $z_t^n(s) = \bigl(1-\alpha_t^{n}(s)\bigr) \, \widebar{V}_t^{n-1}(s)+\alpha_t^{n}(s) \, \hat{v}_t^n(s).$
    	\vspace{.5em}
    	\item[Step 2c.] Perform monotonicity projection operator:\\
    	\vspace{.5em}
    	\quad $\widebar{V}_t^n = \Pi_M\bigl(S_t^n, z_t^n(S_t^n),\widebar{V}_t^{n-1}\bigr)$.
    	\vspace{.5em}
    	\item[Step 2d.] Choose the next state $S_{t+1}^n$ given $\mathcal F^{n-1}$.
    \end{description}
    \vspace{.5em}
    \item[Step 3.] If $n < N$, increment $n$ and return to \textbf{Step 1}.
\end{description}
 \mbox{}\hrulefill\mbox{}
\caption{Monotone--ADP Algorithm}
\label{alg:vfa2}
\end{figure}
Notice from the description that if the monotonicity property (\ref{monogen}) is satisfied at iteration $n-1$, then the fact that the projection operator $\Pi_M$ is applied ensures that the monotonicity property is satisfied again at time $n$. Our benchmarking results of Section \ref{sec:numerical} show that maintaining monotonicity in such a way is an invaluable aspect of the algorithm that allows it to produce very good policies in a relatively small number of iterations. Traditional approximate (or asynchronous) value iteration, on which Monotone--ADP is based, is asymptotically convergent but \emph{extremely} slow to converge in practice (once again, see Section \ref{sec:numerical}). As we have mentioned, $\Pi_M$ is not a standard projection operator, as it ``projects'' to a different space on every iteration, depending on the state visited and value observed; therefore, traditional convergence results no longer hold. The remainder of the paper establishes the asymptotic convergence of Monotone--ADP. 

\subsection{Extensions of Monotone--ADP} We now briefly present two possible extensions of Monotone--ADP. First, consider a discounted, infinite horizon MDP. An extension (or perhaps, simplification) to this case can be obtained by removing the loop over $t$ (and all subscripts of $t$ and $T$) and acquiring one observation per iteration, exactly resembling asynchronous value iteration for infinite horizon problems.

Second, we consider possible extensions when representations of the approximate value function other than lookup table are used; for example, imagine we are using basis functions $\{\phi_g\}_{g \in \mathcal G}$ for some feature set $\mathcal G$ combined with a coefficient vector $\theta_{t}^n$ (which has components $\theta_{tg}^n$), giving the approximation
\[
\widebar{V}_t^n(s) = \sum_{g \in \mathcal G} \theta_{tg}^n \, \phi_g(s).
\]
Equation (\ref{prop:argmin}) is the starting point for adapting Monotone--ADP to handle this case. An analogous version of this update might be given by
\begin{equation}
\theta_t^n \in \argmin \Bigl \{ \|\theta_t - \theta_t^{n-1} \|_2 : \widebar{V}^n_t(S_t^n) = z_t^n(S_t^n) \mbox{ and } \widebar{V}^n_t \textnormal{ monotone} \Bigr\},
\label{eq:argminbasis}
\end{equation}
where we have altered the objective to minimize distance in the coefficient space. Unlike (\ref{prop:argmin}), there is, in general, no simple and easily computable solution to (\ref{eq:argminbasis}), but special cases may exist. The analysis of this situation is beyond the scope of this paper and left to future work. In this paper, we consider the finite horizon case using a lookup table representation.

\section{Assumptions}
\label{sec:assumptions}
We begin by providing some technical assumptions that are needed for convergence analysis. The first assumption gives, in more general terms than previously discussed, the monotonicity of the value functions.
\begin{assumption}
The two monotonicity assumptions are as follows.
\begin{enumerate}[label=(\roman*),labelindent=1in]
\item The terminal value function $C_T$ is monotone over $\mathcal S$ with respect to $\preceq$.
\item For any $t < T$ and any vector $V \in \mathbb R^D$ such that $V_{t+1}$ is monotone over $\mathcal S$ with respect to $\preceq$, it is true that $(HV)_t$ is monotone over the state space as well.
\end{enumerate}
\label{ass:Emono}
\end{assumption}
The above assumption implies that for any choice of terminal value function $V^*_T = C_T$ that satisfies monotonicity, the value functions for the previous time periods are monotone as well. Examples of sufficient conditions include monotonicity in the contribution function plus a condition on the transition function, as in $(i)$ of Proposition \ref{mono_cond_one}, or a condition on the transition probabilities, as in Proposition 4.7.3 of \cite{Puterman}. Intuitively speaking, when the statement \emph{``starting with more at $t$ $\Rightarrow$ ending with more at $t+1$''} applies, in expectation, to the problem at hand, Assumption \ref{ass:Emono} is satisfied. One obvious example that satisfies monotonicity occurs in resource or asset management scenarios; oftentimes in these problems, it is true that for any outcome of the random information $W_{t+1}$ that occurs (e.g., random demand, energy production, or profits), we end with more of the resource at time $t+1$ whenever we start with more of the resource at time $t$. Mathematically, this property of resource allocation problems translates to the stronger statement:
\[
S_{t+1}\,|\,S_t=s,\,a_t=a \; \preceq \; S_{t+1}\,|\,S_t=s',\,a_t=a \quad a.s.
\]
for all $a \in \mathcal A$ when $s \preceq s'$. This is essentially the situation that Proposition \ref{mono_cond_one} describes.
\begin{assumption}
For all $s \in \mathcal S$ and $t < T$, the sampling policy satisfies
\begin{equation*}
\sum_{n=1}^\infty \mathbf{P}\bigl(S_t^n = s \,|\, \mathcal F^{n-1}\bigr) = \infty \quad a.s.
\end{equation*}
\label{ioass}
\end{assumption}
By the Extended Borel--Cantelli Lemma (see \citet{Breiman1992}), any scheme for choosing states that satisfies the above condition will visit every state infinitely often with probability one.
\begin{assumption}
Suppose that the contribution function $C_t(s,a)$ is bounded: without loss of generality, let us assume that for all $s \in \mathcal S$, $t < T$, and $a \in \mathcal A$, $
0 \le C_t(s,a)\le C_\textnormal{max}$, for some $C_\textnormal{max} > 0$. Furthermore, suppose that $0 \le C_T(s) \le C_\textnormal{max}$ for all $s \in \mathcal S$ as well. This naturally implies that there exists $V_\textnormal{max} > 0$ such that $0 \le V_t^*(s) \le V_\textnormal{max}$.
\label{Cbounded}
\end{assumption}
The next three assumptions are standard ones made on the observations $\hat{v}_t^n$, the noise $w_t^n$, and the stepsize sequence $\alpha_t^n$; see \citet{Bertsekas1996} (e.g. Assumption 4.3 and Proposition 4.6) for additional details.
\begin{assumption}
The observations that we receive are bounded (by the same constant $V_\textnormal{max}$):
\begin{equation*}
0 \le \hat{v}_t^n(s) \le V_\textnormal{max} \quad a.s.,
\end{equation*}
for all $s \in \mathcal S$.
\label{boundedobs}
\end{assumption}
Note that the lower bounds of zero in Assumptions \ref{Cbounded} and \ref{boundedobs} are chosen for convenience and can be shifted by a constant to suit the application (as is done in Section \ref{sec:numerical}).

\begin{assumption}
The following holds almost surely:
\[
\mathbf{E} \bigl[w_t^{n+1}(s) \, | \, \mathcal F^n \bigr] = 0,
\]
for any state $s \in \mathcal S$.
This property means that $w_t^n$ is a \emph{martingale difference noise} process.

\label{wass}
\end{assumption}
\begin{assumption}
For each $t \le T$, $s \in\mathcal S$, suppose $\alpha_t^n \in [0,1]$ is $\mathcal F^n$--measurable and
\vspace{0.5em}
\begin{enumerate}[label=(\roman*),labelindent=1in]
\item $\displaystyle \sum_{n=1}^\infty \alpha_t^n(s) = \infty \quad a.s.$,
\vspace{0.5em}
\item $\displaystyle \sum_{n=1}^\infty \alpha_t^n(s)^2 < \infty \quad a.s.$\\
\end{enumerate}\label{Stepassumption}
\end{assumption}

\subsection{Remarks on Simulation}
\label{sec:simremarks}
Before proving the theorem, we offer some additional comments regarding the assumptions as they pertain to simulation. If $H$ is defined in the context of (\ref{bellmangen}), then it is not easy to perform Step 2a of Figure \ref{alg:vfa2},
$$\hat{v}_t^n = \bigl(H\widebar{V}^{n-1}\bigr)_t+w_t^n,$$
 such that Assumption \ref{wass} is satisfied. Due to the fact that the supremum is outside of the expectation operator, an upward bias would be present in the observation $\hat{v}_t^n(s)$ \emph{unless} the expectation can be computed exactly, in which case $w_t^n(s) = 0$ and we have
\begin{equation}
\hat{v}_t^n(s) =\sup_{a \in \mathcal A} \Bigl [ C_t(s,a)+\textbf{E}\bigl[ \widebar{V}^{n-1}_{t+1}(S_{t+1}) \,| \, S_t=s, \, a_t=a \bigr] \Bigr ].
\label{eq:vhat1}
\end{equation}
Thus, any approximation scheme used to calculate the expectation inside of the supremum would cause Assumption \ref{wass} to be unsatisfied. When the approximation scheme is a sample mean, the bias disappears asymptotically with the number of samples (see \cite{Kleywegt2002}, which discusses the \emph{sample average approximation} or SAA method). It is therefore possible that although theoretical convergence is not guaranteed, a large enough sample may still achieve decent results \emph{in practice}.

 On the other hand, in the context of (\ref{bellmanq}) and (\ref{bellmanpostdec}), the expectation and the supremum are interchanged. This means that we can trivially obtain an unbiased estimate of $(H\widebar{V}^{n-1})_t$ by sampling \emph{one outcome} of the information process $W^n_{t+1}$ from the distribution $W_{t+1}\,|\,S_t=s$, computing the next state $S_{t+1}^n$, and solving a deterministic optimization problem (i.e., the optimization within the expectation). In these two cases, we would respectively use
 \begin{equation}
 \hat{v}_t^n(s,a) =  C_t(s,a)+ \max_{a_{t+1} \in \mathcal A} \widebar{Q}^{n-1}_{t+1}(S_{t+1}^n,a_{t+1})
 \label{eq:vhat2}
 \end{equation}
 and
 \begin{equation}
 \hat{v}_t^n(s^a) =  \sup_{a \in \mathcal A} \, \Bigl[ C_{t+1}(S^n_{t+1},a)+ \widebar{V}_{t+1}^{a,n-1}(S_{t+1}^{a,n})\Bigr],
 \label{eq:vhat3}
 \end{equation}
where $\widebar{Q}^{n-1}_{t+1}$ is the approximation to $Q_{t+1}^*$, $\widebar{V}_t^{a,n-1}$ is the approximation to $V_t^{a,*}$, and $S_{t+1}^{a,n}$ is the post--decision state obtained from $S_{t+1}^n$ and $a$. Notice that (\ref{eq:vhat1}) contains an expectation while (\ref{eq:vhat2}) and (\ref{eq:vhat3}) do not, making them particularly well--suited for model--free situations, where distributions are unknown and only samples or experience are available. Hence, the best choice of model depends heavily upon the problem domain. 

Finally, we give a brief discussion of the choice of stepsize. There are a variety of ways in which we can satisfy Assumption \ref{Stepassumption}, and here we offer the simplest example. Consider any deterministic sequence $\{a^n\}$ such that the usual stepsize conditions are satisfied:
\[
\sum_{n=0}^\infty a^n = \infty \; \mbox{ and } \; \sum_{n=0}^\infty (a^n)^2 < \infty.
\]
Let $N(s,n,t) = \sum_{m=1}^n\mathbf{1}_{\{s=S_t^m\}}$ be the random variable representing the total number of visits of state $s$ at time $t$ until iteration $n$.
Then, $\alpha_t^n = a^{N(S_t^n,n,t)}$ satisfies Assumption \ref{Stepassumption}.

\section{Convergence Analysis of the Monotone--ADP Algorithm}
\label{sec:convergence}
We are now ready to show the convergence of the algorithm. Note that although there is a significant similarity between this algorithm and the Discrete On--Line Monotone Estimation (DOME) algorithm described in \citet{Papadaki}, the proof technique is very different. The convergence proof for the DOME algorithm cannot be directly extended to our problem due to differences in the assumptions. 

Our proof draws on proof techniques found in \citet{Tsitsiklis1994a} and \citet{Nascimento2009a}. In the latter, the authors prove convergence of a purely exploitative ADP algorithm given a concave, piecewise--linear value function for the lagged asset acquisition problem. We cannot exploit certain properties inherent to that problem, but in our algorithm we assume exploration of all states, a requirement that can be avoided when we are able to assume concavity. Furthermore, a significant difference in this proof is that we consider the case where $\mathcal S$ may not be a total ordering. A consequence of this is that we extend to the case where the monotonicity property covers multiple dimensions (e.g., the relation on $\mathcal S$ is the componentwise inequality), which was not allowed in \cite{Nascimento2009a}.
\begin{thma}
Under Assumptions \ref{ioass}--\ref{Stepassumption}, for each $t \le T$ and $s \in \mathcal S$, the estimate $\widebar{V}_t^n(s)$ produced by the Monotone--ADP Algorithm of Figure \ref{alg:vfa2}, converge to the optimal value function $V_t^*(s)$ almost surely.
\end{thma}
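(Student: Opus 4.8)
The plan is to argue by backward induction on the time index $t$, exploiting the fact that $(H\widebar V^{n-1})_t$ depends only on $\widebar V^{n-1}_{t+1}$ (Lemma \ref{Hprops}(ii)). The base case $t=T$ is immediate, since Step 0b fixes $\widebar V_T^n(s)=C_T(s)=V_T^*(s)$ for every $n$. For the inductive step I would assume $\widebar V_{t+1}^n \to V_{t+1}^*$ almost surely and deduce the same at time $t$. The key consequence of the hypothesis is that the ``signal'' driving the time-$t$ update, $b_t^n := (H\widebar V^{n-1})_t$, converges almost surely to $(HV^*)_t = V_t^*$; this uses the non-expansiveness of $H$ supplied by Lemma \ref{Hprops}(iv) together with the fixed-point identity of Lemma \ref{Hprops}(iii). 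Thus, \emph{stripped of the projection}, Step 2b is an asynchronous stochastic approximation recursion tracking a converging target $b_t^n \to V_t^*$ with martingale-difference noise $w_t^n$; under Assumption \ref{ioass} (every state visited infinitely often) and Assumption \ref{Stepassumption} (the stepsize conditions), standard theory would already force the visited-state updates toward $V_t^*$.

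The real work is to show that the monotonicity projection $\Pi_M$ of Step 2c does not destroy this convergence, even though it couples the states and modifies $\widebar V_t^n(s)$ at unvisited $s$. I would establish $\widebar V_t^n(s)\to V_t^*(s)$ by proving the two one-sided bounds $\limsup_n \widebar V_t^n(s) \le V_t^*(s)$ and $\liminf_n \widebar V_t^n(s) \ge V_t^*(s)$ for every $s$, via a sandwich argument in the style of \citet{Tsitsiklis1994a} and \citet{Nascimento2009a}. For the upper bound, fix $\epsilon>0$: the induction hypothesis and non-expansiveness give an almost surely finite time after which $b_t^n(s)\le V_t^*(s)+\epsilon$ uniformly in $s$, and a separate martingale-noise accumulation argument (controlling the cumulative noise through Assumption \ref{wass} and $\sum_n (\alpha_t^n)^2<\infty$ from Assumption \ref{Stepassumption}(ii)) gives an almost surely finite time after which the noise contribution is uniformly small. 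The crucial observation is that whenever $\Pi_M$ raises $\widebar V_t^n(s)$ it does so only for $s\succeq S_t^n$, raising it no higher than $z_t^n(S_t^n)$, which is close to $V_t^*(S_t^n)\le V_t^*(s)$ by monotonicity of $V^*$ (Assumption \ref{ass:Emono}); hence an upward projection can never push an iterate appreciably above its own target $V_t^*(s)$. Symmetrically, a downward projection acts only on $s\preceq S_t^n$ and cannot push $\widebar V_t^n(s)$ appreciably below $V_t^*(s)$, since $z_t^n(S_t^n)\approx V_t^*(S_t^n)\ge V_t^*(s)$. These two facts make a shrinking envelope around $V_t^*$ invariant under \emph{both} the smoothing step and the projection, so iterating over a sequence $\epsilon_k\downarrow 0$ traps $\widebar V_t^n$ in any neighborhood of $V_t^*$ for $n$ large. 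Boundedness of all iterates in $[0,\Vmax]^d$ (from Assumptions \ref{Cbounded} and \ref{boundedobs}, stepsizes at most one, and the fact that $\vee,\wedge$ preserve the range) keeps every quantity well defined throughout.

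The main obstacle I anticipate is the interaction between the asynchronous projection and the stochastic noise. Because $\Pi_M$ projects onto a different \emph{random} monotone set at each iteration and alters unvisited coordinates, one cannot treat each $\widebar V_t^n(s)$ as an independent scalar Robbins--Monro sequence; the coupling must instead be absorbed into the bounding sequences. The delicate point is making the control uniform over all states at once, so that a single almost surely finite random time $N(\omega)$ works for the entire envelope $[V_t^*-\epsilon_k,\,V_t^*+\epsilon_k]$ --- this is where finiteness of $\mathcal S$ and a martingale bound on the \emph{cumulative} noise, rather than a state-by-state treatment, carry the argument. Once the envelope is shown to be eventually invariant and the $\epsilon_k$ are driven to zero, almost sure convergence $\widebar V_t^n\to V_t^*$ follows, closing the induction and yielding the theorem for all $t\le T$.
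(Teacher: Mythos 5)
Your proposal reproduces the paper's high-level architecture correctly: backward induction on $t$, a sandwich of shrinking bounds in the style of \citet{Tsitsiklis1994a}, control of accumulated martingale noise, and use of the finiteness of $\mathcal S$ to get uniformity. However, the step on which the entire theorem turns --- why the projection $\Pi_M$ cannot push iterates out of the shrinking envelope --- is argued incorrectly. You claim that an upward projection raises $\widebar{V}_t^n(s)$ only to $z_t^n(S_t^n)$, ``which is close to $V_t^*(S_t^n)$.'' This is not justified: $z_t^n(S_t^n) = \bigl(1-\alpha_t^n(S_t^n)\bigr)\widebar{V}_t^{n-1}(S_t^n) + \alpha_t^n(S_t^n)\bigl[(H\widebar{V}^{n-1})_t(S_t^n) + w_t^n(S_t^n)\bigr]$ contains a \emph{single} realization of the noise $w_t^n(S_t^n)$, which under Assumptions \ref{boundedobs} and \ref{wass} is bounded only by $\Vmax$, not by $\epsilon$; noise is small only in the time-averaged sense. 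Hence one projection can propagate an overshoot of order $\alpha_t^n \Vmax$ to every state above $S_t^n$ in a single iteration, and a step-by-step ``envelope invariance'' argument breaks down. The paper instead bounds the value of a raised state not by proximity to $V^*$ but by the structural identity of Lemma \ref{lem:lowerimmediate}: the raised value \emph{equals} the newly updated value of a lower immediate neighbor, so the bound $U_t^k(\cdot)$ transfers upward using monotonicity of the bounding sequence itself (Lemma \ref{lem:monoU}), while the noise at visited states is absorbed into the separately tracked accumulation sequence $W_t^{n,m}(s)$ (Lemmas \ref{lem:W} and \ref{lem:XW}).

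The second, related omission is well-foundedness. Even if you repair the above by transferring the bound from the visited state to the projected state, you face a circularity: the bound at a projected state depends on the bound at the lower state that triggered the projection, and that lower state may itself be raised by projections infinitely often, so its own bound is not supplied by a standard Robbins--Monro argument. The paper breaks this circle by partitioning the state space into $\mathcal S_t^-$, the states projected upward only finitely often --- nonempty by Lemma \ref{lem:notempty}, since minimal states can never be raised --- for which a pure Tsitsiklis-style argument applies after the random time $N^\Pi$ (Case 1), and the remaining states, which are handled by an induction over the lower update tree $T_t^-(s)$ whose leaves lie in $\mathcal S_t^-$ (Lemma \ref{treeinductionlem}, Case 2). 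Your simultaneous envelope over all states contains no analogue of this partition or tree, so the induction it implicitly requires is not well-founded. In short, the proposal captures the standard stochastic-approximation skeleton but misses, and mis-argues, precisely the mechanism that distinguishes the convergence of Monotone--ADP from that of ordinary asynchronous value iteration.
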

Before providing the proof for this convergence result, we present some preliminary definitions and results. First, we define two deterministic bounding sequences, $U^k$ and $L^k$. The two sequences $U^k$ and $L^k$ can be thought of, jointly, as a sequence of ``shrinking'' rectangles, with $U^k$ being the upper bounds and $L^k$ being the lower bounds. The central idea to the proof is showing that the estimates $\widebar{V}^n$ enter (and stay) in smaller and smaller rectangles, for a fixed $\omega \in \Omega$ (we assume that the $\omega$ does not lie in a discarded set of probability zero). We can then show that the rectangles converge to the point $V^*$, which in turn implies the convergence of $\widebar{V}^n$ to the optimal value function. This idea is attributed to \cite{Tsitsiklis1994a} and is illustrated in Figure \ref{fig:boundingrects}.
\begin{figure}[h]
	\begin{center}
	\includegraphics[scale=.5]{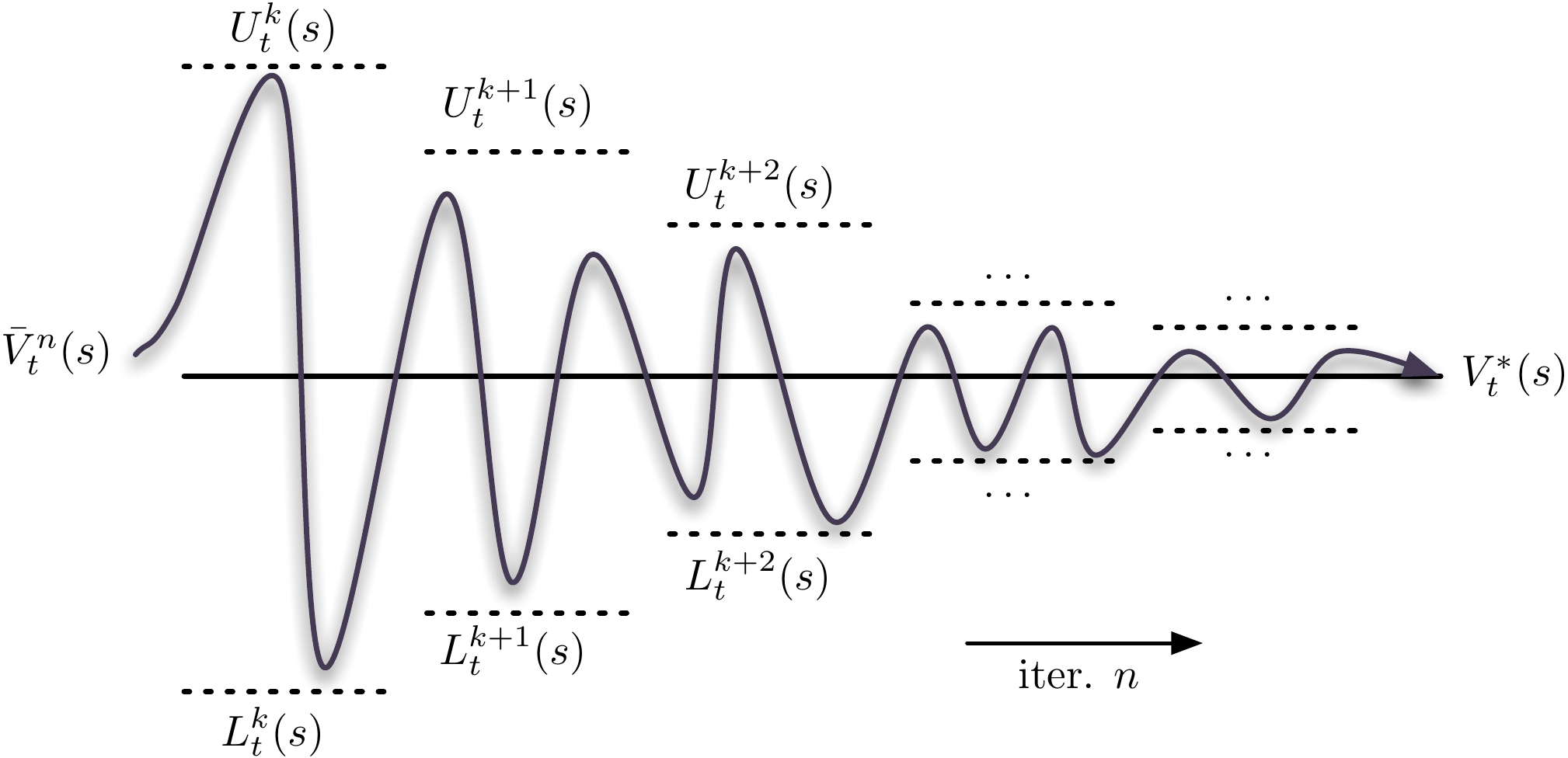}\\
	\end{center}
	\caption{Central Idea of Convergence Proof}
	\label{fig:boundingrects}
\end{figure}

The sequences $U^k$ and $L^k$ are written recursively. Let
\begin{equation}
\begin{aligned}
U^0 &= V^*+V_\textnormal{max} \cdot e,\\
L^0 &= V^*-V_\textnormal{max} \cdot e,
\end{aligned}
\label{Udefn1}
\end{equation}
and let
\begin{equation*}
\begin{aligned}
U^{k+1} &= \frac{U^k+HU^k}{2},\\
L^{k+1} &= \frac{L^k+HL^k}{2}.
\end{aligned}
\end{equation*}

\begin{restatable}{lem}{LemmaOne}
For all $k \ge 0$, we have that
\begin{equation*}
\begin{aligned}
HU^k &\le U^{k+1} \le U^k,\\
HL^k &\ge L^{k+1} \ge L^k.\\
\end{aligned}
\end{equation*}
Furthermore,
\begin{equation}
\begin{aligned}
U^{k} &\longrightarrow V^*,\\
L^{k} &\longrightarrow V^*.\\
\end{aligned}
\label{ULconv}
\end{equation}
\label{lem:1}
\end{restatable}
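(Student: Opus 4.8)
The plan is to establish the monotone-convergence structure in three stages, relying on the properties of $H$ proved in Lemma \ref{Hprops}. First I would prove the two chains of inequalities $HU^k \le U^{k+1} \le U^k$ and $HL^k \ge L^{k+1} \ge L^k$ simultaneously by induction on $k$. The key fact I will use repeatedly is property (iv) of Lemma \ref{Hprops}, which controls how $H$ interacts with additive shifts by $\eta e$. For the base case I would note that $U^0 = V^* + V_\textnormal{max}\, e$ and, since $HV^* = V^*$ by property (iii), property (iv) gives $HU^0 = H(V^* + V_\textnormal{max}\, e) \le V^* + V_\textnormal{max}\, e = U^0$; this immediately yields $U^1 = (U^0 + HU^0)/2 \le U^0$ by averaging, and $HU^0 \le U^1$ follows because averaging a quantity with something no smaller leaves it below the average only after one checks $HU^0 \le (U^0 + HU^0)/2$, i.e. $HU^0 \le U^0$, which is exactly what we have. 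The symmetric argument with $L^0 = V^* - V_\textnormal{max}\, e$ and the lower bound in (iv) handles the $L$ chain.

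For the inductive step I would assume $HU^k \le U^{k+1} \le U^k$ and seek $HU^{k+1} \le U^{k+2} \le U^{k+1}$. The right inequality $U^{k+2} \le U^{k+1}$ is equivalent to $HU^{k+1} \le U^{k+1}$, so the whole step reduces to showing $HU^{k+1} \le U^{k+1}$. Here I would invoke the monotonicity of $H$ (property (i)): from $U^{k+1} \le U^k$ and monotonicity, $HU^{k+1} \le HU^k$, and by the inductive hypothesis $HU^k \le U^{k+1}$, so $HU^{k+1} \le U^{k+1}$ as needed. The averaging then delivers both $U^{k+2} \le U^{k+1}$ and $HU^{k+1} \le U^{k+2}$. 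The $L^k$ chain is entirely analogous, using $L^{k+1} \ge L^k$ together with monotonicity to get $HL^{k+1} \ge HL^k \ge L^{k+1}$.

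The two chains show that $\{U^k\}$ is componentwise nonincreasing and bounded below (by $V^*$, since $U^k \ge HU^{k-1} \ge \cdots$ stays above the fixed point — I would verify $U^k \ge V^*$ by a parallel induction using $HV^* = V^*$ and monotonicity), while $\{L^k\}$ is nondecreasing and bounded above by $V^*$. Hence both sequences converge pointwise; call the limits $U^\infty$ and $L^\infty$. Passing to the limit in the recursion $U^{k+1} = (U^k + HU^k)/2$ and using continuity of $H$ (which follows from its definition via a finite supremum over $\mathcal A$ and the finiteness of $\mathcal S$) gives $U^\infty = (U^\infty + HU^\infty)/2$, i.e. $HU^\infty = U^\infty$; by the uniqueness of the fixed point in property (iii), $U^\infty = V^*$, and symmetrically $L^\infty = V^*$. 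This establishes \eqref{ULconv}.

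The main obstacle I anticipate is the limiting argument rather than the inductive inequalities: specifically, justifying that $H$ is continuous so that the fixed-point equation survives the passage to the limit. I expect this to follow cleanly because $\mathcal S$ is finite and $H$ is built from a pointwise supremum of affine-in-$V$ expressions over the finite action set $\mathcal A$, making each component of $HV$ continuous (indeed Lipschitz) in $V$; but it is the one place where I would take care, and the uniqueness clause of Lemma \ref{Hprops}(iii) is what converts convergence of the rectangles into convergence to the single point $V^*$.
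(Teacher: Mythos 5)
Your proof is correct. Note that the paper does not actually prove Lemma \ref{lem:1} itself: it cites Lemmas 4.5 and 4.6 of \cite{Bertsekas1996}, remarking only that the properties in Lemma \ref{Hprops} are what make that citation applicable. Your argument is a self-contained reconstruction of that standard argument, and the structure is exactly right: property (iv) together with $HV^* = V^*$ handles the base case $HU^0 \le U^0$; the inductive step correctly reduces everything to $HU^{k+1} \le U^{k+1}$, obtained from monotonicity via $HU^{k+1} \le HU^k \le U^{k+1}$; the parallel induction giving $U^k \ge V^*$ supplies the lower bound needed for monotone convergence; and uniqueness of the fixed point in property (iii) converts the limiting fixed--point equation into $U^\infty = V^*$. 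What your version buys is transparency about precisely which of the four properties of $H$ are used where, which the paper's one--line citation hides.

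One small repair: your justification of the continuity of $H$ appeals to ``the finite action set $\mathcal A$,'' but the paper only requires $\mathcal A$ to be finite for the state--action formulation (\ref{bellmanq}); in formulations (\ref{bellmangen}) and (\ref{bellmanpostdec}) the supremum may range over an infinite set. Continuity does not need finiteness: properties (i) and (iv) of Lemma \ref{Hprops} already imply that $H$ is nonexpansive in the sup norm. Indeed, if $\|V - V'\|_\infty \le \eta$ with $\eta > 0$, then $V' - \eta e \le V \le V' + \eta e$, so monotonicity and (iv) give $HV' - \eta e \le HV \le HV' + \eta e$, i.e., $\|HV - HV'\|_\infty \le \eta$. (Alternatively, a supremum of an arbitrary family of uniformly $1$--Lipschitz affine functions of $V$ is itself $1$--Lipschitz, so no cardinality assumption on $\mathcal A$ is required.) With that substitution, your passage to the limit and the appeal to uniqueness are airtight.
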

\begin{proof}
The proof of this lemma is given in \cite{Bertsekas1996} (see Lemma 4.5 and Lemma 4.6). The properties of $H$ given in Proposition \ref{Hprops} are used for this result.
\end{proof}
\begin{restatable}{lem}{LemmaTwo}
The bounding sequences satisfy the monotonicity property; that is, for $k \ge 0$, $t \le T$, $s \in \mathcal S$, $s' \in \mathcal S$ such that $s \preceq s'$, we have
\begin{equation*}
\begin{aligned}
U^k_t(s) \le U^k_t(s'),\\
L^k_t(s) \le L^k_t(s').
\end{aligned}
\end{equation*}
\label{lem:monoU}
\end{restatable}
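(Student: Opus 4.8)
The plan is to prove both statements by a single induction on $k$, leaning on two facts: that the generalized dynamic programming operator $H$ preserves monotonicity (this is precisely the content of Assumption \ref{ass:Emono}), and that the set of monotone vectors is closed under addition and under multiplication by a positive scalar. I would treat $U^k$ in detail and remark that the recursion for $L^k$ has exactly the same form, so the argument transfers verbatim.

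For the base case $k=0$, I would first record that $V^*$ is itself monotone. This is available directly as property (\ref{monogen}), but if one prefers to derive it from the assumptions, it follows by backward induction on $t$ from $HV^* = V^*$ (Lemma \ref{Hprops}(iii)): the terminal slice $V^*_T = C_T$ is monotone by Assumption \ref{ass:Emono}(i), and if $V^*_{t+1}$ is monotone then $(HV^*)_t = V^*_t$ is monotone by Assumption \ref{ass:Emono}(ii). Since $U^0 = V^* + \Vmax \cdot e$ shifts every component of each slice $U^0_t$ by the same constant, the inequalities $V^*_t(s) \le V^*_t(s')$ carry over unchanged to $U^0_t$; subtracting the constant handles $L^0$. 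Hence $U^0$ and $L^0$ are monotone.

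For the inductive step, assume $U^k$ is monotone (every slice $U^k_t$ is order-preserving). The key step is to show $HU^k$ is monotone, and here I would be careful to invoke \emph{both} parts of Assumption \ref{ass:Emono}: at $t=T$, $(HU^k)_T = C_T$ is monotone by part (i), while for each $t < T$ the induction hypothesis gives $U^k_{t+1}$ monotone, so part (ii) yields $(HU^k)_t$ monotone. Thus every slice of $HU^k$ is order-preserving. It then remains to observe that if two vectors have all slices monotone, their sum does too — for $s \preceq s'$, $V_t(s) + V'_t(s) \le V_t(s') + V'_t(s')$ — and that scaling by $\tfrac{1}{2} > 0$ preserves this. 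Applying this closure property to $U^{k+1} = \tfrac{1}{2}(U^k + HU^k)$ completes the induction for $U^k$, and the identical computation with $L^{k+1} = \tfrac{1}{2}(L^k + HL^k)$ closes the case for $L^k$.

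I do not anticipate any genuine obstacle: the entire substance is delivered by Assumption \ref{ass:Emono}, which guarantees $H$ maps monotone value functions to monotone value functions, combined with the elementary fact that the monotone cone is closed under nonnegative linear combinations. The only point demanding a little attention is ensuring monotonicity of $HU^k$ at the terminal index $t=T$ as well as for $t<T$, which is why both clauses of the monotonicity assumption must be cited.
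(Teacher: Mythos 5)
Your proposal is correct and takes essentially the same approach as the paper's proof: induction on $k$, with the base case following from the monotonicity of $V^*$ in (\ref{monogen}) plus the fact that adding a constant vector preserves monotonicity, and the inductive step following from Assumption \ref{ass:Emono} (so that $HU^k$ inherits monotonicity from $U^k$) together with closure of monotonicity under averaging. The paper's write-up is simply a terser rendering of this same argument; your extra care at the terminal index $t=T$ and the alternative derivation of $V^*$'s monotonicity are harmless elaborations, not a different route.
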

\begin{proof}
See Appendix \ref{sec:appendix}.
\end{proof}
We continue with some definitions pertaining to the projection operator $\Pi_M$. A ``$-$'' in the superscript signifies ``the value $s$ is too small'' and the ``$+$'' signifies ``the value of $s$ is too large.''
\begin{defn}
For $t<T$ and $s \in \mathcal S$, let $\mathcal N_t^-(s)$ be a random set representing the iterations for which $s$ was increased by the projection operator. Similarly, let $\mathcal N_t^+(s)$ represent the iterations for which $s$ was decreased:
\begin{align*}
\mathcal N_t^{\Pi-}(s) &= \bigl\{n: s \ne S_t^n \mbox{ and } \widebar{V}_t^{n-1}(s) < \widebar{V}_t^n(s)\bigr\},\\
\mathcal N_t^{\Pi+}(s) &= \bigl\{n: s \ne S_t^n \mbox{ and }\widebar{V}_t^{n-1}(s) > \widebar{V}_t^n(s) \bigr\}.
\end{align*}
\end{defn}
\begin{defn}
For $t < T$ and $s\in\mathcal S$, let $N^{\Pi-}(t,s)$ be the last iteration for which the state $s$ was increased by $\Pi_M$ at time $t$. 
\begin{equation*}
N^{\Pi-}_t(s) = \max \, \mathcal N_t^-(s).
\end{equation*}
Similarly, let
\begin{equation*}
N^{\Pi+}_t(s) = \max \, \mathcal N_t^+(s).
\end{equation*}
Note that $N^{\Pi-}_t(s) = \infty$ if $|\mathcal N_t^-(s)| = \infty$ and $N^{\Pi+}_t(s) = \infty$ if $|\mathcal N_t^+(s)| = \infty$.
\end{defn}
\begin{defn}
Let $N^\Pi$ be large enough so that for iterations $n \ge N^\Pi$, any state increased (decreased) finitely often by the projection operator $\Pi_M$ is no longer affected by $\Pi_M$. In other words, if some state is increased (decreased) by $\Pi_M$ on an iteration after $N^\Pi$, then that state is increased (decreased) by $\Pi_M$ infinitely often. We can write:
\begin{equation*}
\begin{aligned}
N^\Pi = \max\; \Bigl( \bigl \{N^{\Pi-}_t(s)&: t < T,\;s \in \mathcal S,\;N^{\Pi-}_t(s) < \infty \bigr \} \, \cup \\ 
&\bigl \{N^{\Pi+}_t(s): t < T,\;s \in \mathcal S,\;N^{\Pi+}_t(s) < \infty \bigr \} \Bigr) + 1.
\end{aligned}
\end{equation*}
\end{defn}

We now define, for each $t$, two random subsets $\mathcal S_t^-$ and $\mathcal S_t^+$ of the state space $\mathcal S$ where $\mathcal S_t^-$ contains states that are increased by the projection operator $\Pi_M$ finitely often and $\mathcal S_t^+$ contains states that are decreased by the projection operator finitely often. The role that these two sets play in the proof is as follows: 
\begin{itemize}
\item We first show convergence for states that are projected finitely often ($s \in \mathcal S_t^-$ or $s \in \mathcal S_t^+$).
\item Next, relying on the fact that convergence already holds for states that are projected finitely often, we use an induction--like argument to extend the property to states that are projected infinitely often ($s \in \mathcal S \setminus \mathcal S_t^-$ or $s \in \mathcal S \setminus \mathcal S_t^+$). This step requires the definition of a tree structure that arranges the set of states and its partial ordering in an intuitive way.
\end{itemize}

\begin{defn}
For $t < T$, define
\begin{equation*}
\mathcal S_t^- = \bigl\{s \in \mathcal S: N^{\Pi-}_t(s) < \infty \bigr\},
\end{equation*}
and
\begin{equation*}
\mathcal S_t^+ = \bigl\{s \in \mathcal S: N^{\Pi+}_t(s) < \infty \bigr\},
\end{equation*}
to be random subsets of states that are projected finitely often.
\end{defn}


\begin{restatable}{lem}{notempty}
The random sets $\mathcal S_t^-$ and $\mathcal S_t^+$ are almost surely nonempty.
\label{lem:notempty}
\end{restatable}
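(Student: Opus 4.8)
The plan is to prove the stronger, pathwise statement that both sets are nonempty for \emph{every} $\omega \in \Omega$, which immediately implies the claimed almost-sure conclusion. The key observation is purely structural: the projection operator $\Pi_M$ respects the partial order. Inspecting its definition, the value at a state $s \ne S_t^n$ can be increased at iteration $n$ only through the clause $z^r \vee V_t(s)$, which applies exactly when $S_t^n \prec s$ (the visited state lies strictly below $s$); symmetrically, it can be decreased only through $z^r \wedge V_t(s)$, which applies exactly when $S_t^n \succ s$. Moreover, since $\alpha_t^n(s) = \alpha_t^{n-1}\mathbf{1}_{\{s = S_t^n\}}$, the smoothing of Step 2b leaves every component other than $S_t^n$ untouched, so apart from the visited state, \emph{every} change to $\widebar{V}_t^n$ comes from $\Pi_M$.

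First I would exhibit a concrete element of $\mathcal S_t^-$. Because $|\mathcal S| < \infty$ and $\mathcal S \ne \emptyset$, the poset $(\mathcal S, \preceq)$ has at least one minimal element $s^{\min}$, i.e., a state for which no $s' \in \mathcal S$ satisfies $s' \prec s^{\min}$. By the observation above, the event $s^{\min} \ne S_t^n$ and $\widebar{V}_t^{n-1}(s^{\min}) < \widebar{V}_t^n(s^{\min})$ can never occur, since it would require $S_t^n \prec s^{\min}$, contradicting minimality. Hence $\mathcal N_t^-(s^{\min}) = \emptyset$, so $N_t^{\Pi-}(s^{\min}) < \infty$ and $s^{\min} \in \mathcal S_t^-$, giving $\mathcal S_t^- \ne \emptyset$.

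Symmetrically, I would take a maximal element $s^{\max}$ of $(\mathcal S, \preceq)$, which exists for the same cardinality reason. No iteration can decrease $s^{\max}$ through $\Pi_M$, as that would require $S_t^n \succ s^{\max}$, contradicting maximality; thus $\mathcal N_t^+(s^{\max}) = \emptyset$, so $N_t^{\Pi+}(s^{\max}) < \infty$ and $s^{\max} \in \mathcal S_t^+$, giving $\mathcal S_t^+ \ne \emptyset$. Since both arguments hold along every sample path, the sets are surely — and in particular almost surely — nonempty.

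There is no genuinely hard step here; the result is essentially a structural observation about how $\Pi_M$ interacts with the partial order. The only points requiring care are to confirm that the non-visited components of $\widebar{V}_t^n$ are altered \emph{solely} by the projection clauses (so that extremal elements are genuinely immune to being increased or decreased), and to respect the bookkeeping convention whereby $\mathcal N_t^-(s^{\min}) = \emptyset$ yields a finite $N_t^{\Pi-}(s^{\min})$ and hence membership in $\mathcal S_t^-$. The real value of the lemma is that it supplies the base case — states projected finitely often — for the subsequent induction over the partial order, so establishing nonemptiness via minimal and maximal elements is precisely the form needed downstream.
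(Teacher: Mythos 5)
Your proposal is correct and follows essentially the same argument as the paper: take a minimal element of the finite poset $(\mathcal S, \preceq)$, observe that $\Pi_M$ can only increase a state's value when a strictly smaller state is visited, conclude that the minimal element is never increased and hence lies in $\mathcal S_t^-$, and argue symmetrically (via a maximal element) for $\mathcal S_t^+$. Your added care about the smoothing step touching only the visited state and the empty-set bookkeeping convention are fine refinements of the same idea, not a different route.
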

\begin{proof}
See Appendix \ref{sec:appendix}.
\end{proof}

We now provide several remarks regarding the projection operator $\Pi_M$. The value of a state $s$ can only be increased by $\Pi_M$ if we visit a ``smaller'' state, i.e., $S_t^n \preceq s$. This statement is obvious from the second condition of (\ref{genprojection]}). Similarly, the value of the state can only be decreased by $\Pi_M$ if the visited state is ``larger,'' i.e., $S_t^n \succeq s$. Intuitively, it can be useful to imagine that, in some sense, the values of states can be ``pushed up'' from the ``left'' and ``pushed down'' from the ``right.''

Finally, due to our assumption that $\mathcal S$ is only a partial ordering, the update process (from $\Pi_M$) becomes more difficult to analyze than in the total ordering case. To facilitate the analysis of the process, we introduce the notions of \emph{lower (upper) immediate neighbors} and \emph{lower (upper) update trees}.
\begin{defn}
For $s = (m,i)\in \mathcal S$, we define the set of \emph{lower immediate neighbors} $\mathcal S_L(s)$ in the following way:
\begin{equation*}
\begin{aligned}
\mathcal S_L(s) = \bigl\{s' \in \mathcal S : \quad &s' \preceq s,\\
& s' \ne s,\\
&\nexists \, s'' \in \mathcal S,\,s'' \ne s,\,s'' \ne s',\,s' \preceq s'' \preceq s \bigr\}.
\end{aligned}
\end{equation*}
In other words, there does not exist $s''$ in between $s'$ and $s$. The set of \emph{upper immediate neighbors} $\mathcal S_U(s)$ is defined in a similar way:
\begin{equation*}
\begin{aligned}
\mathcal S_U(s) = \{s' \in \mathcal S:\quad &s' \succeq s,\\
& s' \ne s,\\
&\nexists \, s'' \in \mathcal S,\,s'' \ne s,\,s'' \ne s',\,s' \succeq s'' \succeq m\}.
\end{aligned}
\end{equation*}
\label{defn:lower}
\end{defn}
The intuition for the next lemma is that if some state $s$ is increased by $\Pi_M$, then it must have been caused by visiting a lower state. In particular, \emph{either} the visited state was one of the lower immediate neighbors \emph{or} one of the lower immediate neighbors was also increased by $\Pi_M$. In either case, one of the lower immediate neighbors has the same value as $s$. This lemma is crucial later in the proof.
\begin{restatable}{lem}{lowerimmediate}
Suppose the value of $s$ is increased by $\Pi_M$ on some iteration $n$:
\begin{equation*}
s \ne S_t^n \, \mbox{ and } \, \widebar{V}_t^{n-1}(s) < \widebar{V}_t^n(s).
\end{equation*}
Then, there exists another state $s' \in \mathcal S_L(s)$ (in the set of lower immediate neighbors) whose value is equal to the newly updated value:
\begin{equation*}
\widebar{V}_t^n(s') = \widebar{V}_t^{n}(s).
\end{equation*}
\label{lem:lowerimmediate}
\end{restatable}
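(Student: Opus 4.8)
The plan is to trace exactly how the projection operator $\Pi_M$ acts on iteration $n$, and then to locate the required neighbor by a maximality argument in the finite poset $\mathcal S$. Throughout I would use the fact — established by induction from the monotone initialization in Step 0a together with the monotonicity-preserving property of $\Pi_M$ noted after Figure \ref{alg:vfa2} — that the previous estimate $\widebar{V}_t^{n-1}$ is monotone over $\mathcal S$. This monotonicity is the structural ingredient that makes everything go through.

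First I would read off the consequences of the hypothesis $s \ne S_t^n$ and $\widebar{V}_t^{n-1}(s) < \widebar{V}_t^n(s)$ directly from the definition of $\Pi_M$. Writing $z^r = z_t^n(S_t^n)$ for the smoothed observation at the visited state, inspection of the four cases shows that a state $s \ne S_t^n$ can have its value \emph{strictly increased} only through the ``$\vee$'' branch, the case $S_t^n \preceq s$: the ``$\wedge$'' branch can only decrease the value, and the ``otherwise'' branch leaves it unchanged. Hence we must have $S_t^n \preceq s$ with $S_t^n \ne s$, and moreover $\widebar{V}_t^n(s) = z^r \vee \widebar{V}_t^{n-1}(s) = z^r$, where the second equality uses the strict inequality $\widebar{V}_t^{n-1}(s) < \widebar{V}_t^n(s)$.

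Next I would exhibit a lower immediate neighbor of $s$ that lies above $S_t^n$. I would consider the set $A = \{ u \in \mathcal S : S_t^n \preceq u \preceq s,\ u \ne s \}$, which is nonempty (it contains $S_t^n$) and finite, and let $s'$ be any $\preceq$-maximal element of $A$. A short argument shows $s' \in \mathcal S_L(s)$: we have $s' \preceq s$ and $s' \ne s$, and if some $s''$ satisfied $s' \preceq s'' \preceq s$ with $s'' \notin \{s', s\}$, then $s''$ would lie in $A$ and strictly dominate $s'$, contradicting maximality. By construction $S_t^n \preceq s'$. Finally I would evaluate $\widebar{V}_t^n(s')$: if $s' = S_t^n$ the value equals $z^r$ immediately; otherwise $s'$ falls in the ``$\vee$'' branch, so $\widebar{V}_t^n(s') = z^r \vee \widebar{V}_t^{n-1}(s')$, and monotonicity of $\widebar{V}_t^{n-1}$ with $s' \preceq s$ gives $\widebar{V}_t^{n-1}(s') \le \widebar{V}_t^{n-1}(s) < z^r$, whence $\widebar{V}_t^n(s') = z^r = \widebar{V}_t^n(s)$, as required.

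The only genuine subtlety — the step I expect to be the main obstacle — is guaranteeing simultaneously that the chosen immediate neighbor $s'$ dominates the visited state $S_t^n$ (so that $\Pi_M$ actually reaches it through the ``$\vee$'' branch) \emph{and} that it ends up with value exactly $z^r$ rather than some larger pre-existing value. The maximality argument secures the former, and monotonicity of $\widebar{V}_t^{n-1}$ secures the latter; everything else is routine case-checking against the definition of $\Pi_M$.
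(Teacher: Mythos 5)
Your proof is correct, and its overall skeleton matches the paper's: both arguments locate a lower immediate neighbor $s'$ of $s$ that dominates the visited state $S_t^n$, and then use monotonicity of $\widebar{V}_t^{n-1}$ together with the chain $\widebar{V}_t^{n-1}(s') \le \widebar{V}_t^{n-1}(s) < z_t^n(S_t^n)$ to force $\widebar{V}_t^n(s') = z_t^n(S_t^n) = \widebar{V}_t^n(s)$. Where you genuinely differ is in how the neighbor is produced. The paper proves a stronger containment: with $A = \{s'' : s'' \preceq s,\, s'' \ne s\}$ and $B = \bigcup_{s_l \in \mathcal S_L(s)} \{s'' : s'' \preceq s_l\}$, it shows $A \subseteq B$ by a chain-extension argument --- repeatedly inserting intermediate states $s_1 \preceq s_2 \preceq \cdots \preceq s$ and invoking finiteness of $\mathcal S$ to reach a contradiction --- and then specializes to $S_t^n \in A$. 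You instead work directly inside the interval: you take $s'$ to be a $\preceq$-maximal element of the finite nonempty set $\{u : S_t^n \preceq u \preceq s,\, u \ne s\}$, and maximality plus transitivity immediately give $s' \in \mathcal S_L(s)$ with $S_t^n \preceq s'$. Your selection is shorter and arguably cleaner, avoiding the bookkeeping of the distinct chain; the paper's containment $A \subseteq B$ is a slightly more general statement (every state strictly below $s$ sits below some immediate neighbor), but that extra generality is not used elsewhere. Two details you make explicit that the paper leaves implicit also strengthen your write-up: the case analysis on the definition of $\Pi_M$ showing that a strict increase at $s \ne S_t^n$ can only come from the ``$\vee$'' branch (hence $S_t^n \preceq s$ and $\widebar{V}_t^n(s) = z_t^n(S_t^n)$), and the boundary sub-case $s' = S_t^n$, where the value is set to $z_t^n(S_t^n)$ by the first branch rather than the ``$\vee$'' branch.
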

\begin{proof}
See Appendix \ref{sec:appendix}.
\end{proof}
\begin{defn}
Consider some $\omega \in \Omega$. Let $s \in \mathcal S \setminus \mathcal S_t^-$\hspace{-0.1em}, meaning that $s$ is increased by $\Pi_M$ infinitely often: $|\mathcal N_t^-(s)|=\infty$. A \emph{lower update tree} $T_t^-(s)$ is an organization of the states in the set $\mathcal L =\{s' \in \mathcal S: s' \preceq s\}$, where the value of each node is an element of $\mathcal L$. The tree $T_t^-(s)$ is constructed according to the following rules. 
\begin{enumerate}[label=(\roman*),labelindent=1in]
\item The root node of $T_t^-(s)$ has value $s$.
\item Consider an arbitrary node $j$ with value $s_j$.
\begin{enumerate}
\item If $s_j \in \mathcal S \setminus \mathcal S_t^-$, then for each $s_{jc} \in \mathcal S_L(s_j)$, add a child node with value $s_{jc}$ to the node $j$.
\item If $s_j \in \mathcal S_t^-$, then $j$ is a leaf node (it does not have any child nodes).
\end{enumerate}
\end{enumerate}
The tree $T_t^-(s)$ is unique and can easily be built by starting with the root node and successively applying the rules. The \emph{upper update tree} $T_t^+(s)$ is defined in a completely analogous way.
\end{defn}
Note that the lower update tree is random and we now argue that for each $\omega$, it is well--defined. We observe that it cannot be the case for some state $s$ to be an element of $\mathcal S \setminus \mathcal S_t^-$ while $\mathcal S_L(s') = \{\}$, as for it to be increased infinitely often, there must exist at least one ``lower'' state whose observations cause the monotonicity violations. Using this fact along with the finiteness of $\mathcal S$ and Lemma \ref{lem:notempty}, which states that $\mathcal S_t^-$ is nonempty, it is clear that all paths down the tree reach a leaf node (i.e., an element of $\mathcal S_t^-$). The reason for discontinuing the tree at states in $\mathcal S_t^-$ is that our convergence proof employs an induction--like argument up the tree, starting with states in $\mathcal S_t^-$. Lastly, we remark that it is possible for multiple nodes to have the same value. As an illustrative example, consider the case with $\mathcal S = \{0,1,2\}^2$ with $\preceq$ being the componentwise inequality. Assume that for a particular $\omega \in \Omega$, $s = (s_x,s_y) \in \mathcal S_t^-$ if and only if $s_x=0$ or $s_y=0$ (lower boundary of the square). Figure \ref{trees} shows the realization of the lower update tree at evaluated at the state $(2,2)$.
\begin{figure}[h]
	\begin{center}
	\includegraphics[scale=.5]{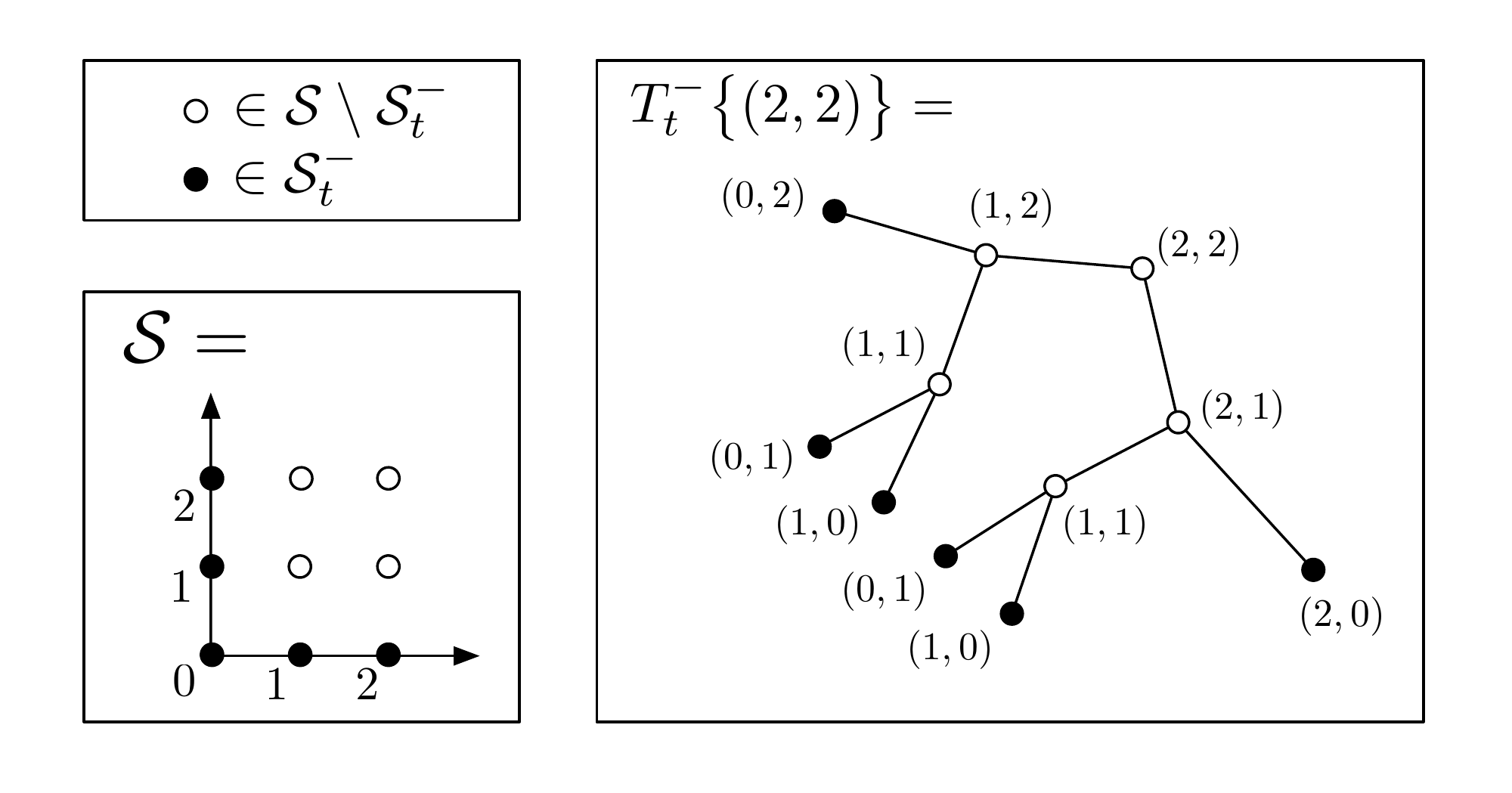}\\
	\end{center}
	\caption{Illustration of the Lower Update Tree}
	\label{trees}
\end{figure}

The next lemma is a useful technical result used in the convergence proof.
\begin{restatable}{lem}{alphaprod}
For any $s \in \mathcal S$,
\begin{equation*}
\displaystyle
\lim_{m\rightarrow \infty} \left[ \, \prod_{n=1}^m \bigl(1-\alpha_t^n(s)\bigr) \, \right]= 0 \quad a.s.
\end{equation*}
\label{lem:alphaprod}
\end{restatable}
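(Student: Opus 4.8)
The plan is to reduce the claim to the elementary fact that a product of factors $1-\alpha_n$ with nonnegative, non-summable $\alpha_n \in [0,1]$ vanishes, and then apply it pathwise. First I would record a monotonicity observation: by Assumption \ref{Stepassumption} each stepsize satisfies $\alpha_t^n(s) \in [0,1]$, so every factor $1 - \alpha_t^n(s)$ lies in $[0,1]$. Consequently the partial products
\[
P_m \;:=\; \prod_{n=1}^m \bigl(1-\alpha_t^n(s)\bigr)
\]
are nonnegative and nonincreasing in $m$, so the limit exists and it suffices to show that it equals $0$.

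The key step is the deterministic inequality $1 - x \le e^{-x}$, valid for all $x \in \mathbb{R}$. Applying it termwise to each factor of $P_m$ yields the two-sided bound
\[
0 \;\le\; P_m \;\le\; \exp\Bigl(-\sum_{n=1}^m \alpha_t^n(s)\Bigr).
\]
I would then invoke Assumption \ref{Stepassumption}(i), which guarantees that $\sum_{n=1}^\infty \alpha_t^n(s) = \infty$ almost surely. On the almost-sure event where this divergence holds, the exponent tends to $-\infty$, so the right-hand side tends to $0$; since $P_m \ge 0$, the squeeze theorem gives $P_m \to 0$. Intersecting over the finite state space $\mathcal S$ preserves the almost-sure conclusion.

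There is no substantive obstacle here; the only point requiring mild care is the interplay between the deterministic inequality and the almost-sure qualifier. The inequality $1-x \le e^{-x}$ is purely pathwise, so for each fixed $\omega$ outside the probability-zero set on which $\sum_n \alpha_t^n(s)$ fails to diverge, the squeeze argument applies verbatim, and the exceptional null set is the same one furnished by Assumption \ref{Stepassumption}(i). I note that Assumption \ref{Stepassumption}(ii) (square-summability of the stepsizes) plays no role in this particular lemma; only non-summability and the range $[0,1]$ are needed.
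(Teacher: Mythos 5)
Your proposal is correct and follows essentially the same argument as the paper: both establish that the partial products are nonnegative and nonincreasing, bound them by $\exp\bigl(-\sum_{n=1}^m \alpha_t^n(s)\bigr)$ (your termwise inequality $1-x \le e^{-x}$ is just the exponentiated form of the paper's $\log(1-x) \le -x$), and conclude from the almost-sure divergence of $\sum_n \alpha_t^n(s)$ in Assumption \ref{Stepassumption}(i). Your added remarks --- that the argument is pathwise on the divergence event and that square-summability is not needed --- are accurate but do not change the substance.
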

\begin{proof}
See Appendix \ref{sec:appendix}.
\end{proof}
With these preliminaries in mind (other elements will be defined as they arise), we begin the convergence analysis.
\begin{proof}[Proof of Theorem]
As previously mentioned, to show that the sequence $\widebar{V}_t^n(s)$ (almost surely) converges to $V^*_t(s)$ for each $t$ and $s$, we need to argue that $\widebar{V}_t^n(s)$ eventually enters every rectangle (or ``interval,'' when we discuss a specific component of the vector $\widebar{V}^n$), defined by the sequence  $L_t^k(s)$ and $U_t^k(s)$. Recall that the estimates of the value function produced by the algorithm are indexed by $n$ and the bounding rectangles are indexed by $k$. Hence, we aim to show that for each $k$, we have that for $n$ sufficiently large, it is true that $\forall \, s\in \mathcal S$,
\begin{equation}
L_t^k(s) \le \widebar{V}_t^n(s) \le U_t^k(s).
\label{boundrect}
\end{equation}
Following this step, an application of (\ref{ULconv}) in Lemma \ref{lem:1} completes the proof. We show the second inequality of (\ref{boundrect}) and remark that the first can be shown in a completely symmetric way. The goal is then to show that $\exists \, N_t^k < \infty$ $a.s.$ such that $\forall \, n \ge N_t^k$ and $\forall \, s\in \mathcal S$,
\begin{equation}
\widebar{V}_t^n(s) \le U_t^k(s).
\label{toprove}
\end{equation}
Choose $\omega \in \Omega$. For ease of presentation, the dependence of the random variables on $\omega$ is omitted. We use backward induction on $t$ to show this result, which is the same technique used in \cite{Nascimento2009a}. The inductive step is broken up into two cases, $s \in \mathcal S_t^-$ and $s \in \mathcal S \setminus \mathcal S_t^-$.

\subsubsection*{Base case, $t=T$.}
Since for all $s \in \mathcal S$, $k$, and $n$, we have that (by definition) $\widebar{V}_T^n(s) = U_T^k(s) = 0$, we can arbitrarily select $N_T^k$. Suppose that for each $k$, we choose $N_T^k = N^\Pi$, allowing us to use the property of $N^\Pi$ that if $s \in \mathcal S_t^-$, then the estimate of the value at $s$ is no longer affected by $\Pi_M$ on iterations $n \ge N^\Pi$.
\subsubsection*{Induction hypothesis, $t+1$} Assume for $t+1 \le T$ that $\forall \, k \ge 0$,
$\exists \, N_{t+1}^k < \infty$ such that $N_{t+1}^k \ge N^{\Pi}$ and $\forall \, n \ge N_{t+1}^k$, we have that $\forall \, s\in \mathcal S$,
\begin{equation*}
\widebar{V}_{t+1}^n(s) \le U_{t+1}^k(s).
\end{equation*}
%


\subsubsection*{Inductive step from $t+1$ to $t$} The remainder of the proof concerns this inductive step and is broken up into two cases, $s \in \mathcal S_t^-$ and $s \in \mathcal S \setminus \mathcal S_t^-$. For each $s$, we show the existence of a state dependent iteration $\tilde{N}_t^k(s) \ge N^\Pi$, such that for $n \ge \tilde{N}_t^k(s)$, (\ref{toprove}) holds. The \emph{state independent} iteration $N_t^k$ is then taken to be the maximum of $\tilde{N}_t^k(s)$ over $s$.

\subsubsection*{Case 1: $s \in \mathcal S_t^-$} To prove this case, we induct forwards on $k$. Note that we are still inducting backwards on $t$, so the induction hypothesis for $t+1$ still holds. The inductive step is proved in essentially the same manner as Theorem 2 of \cite{Tsitsiklis1994a}.

\subsubsection*{Base case, $k=0$ (within induction on $t$)}
By Assumption \ref{Cbounded} and (\ref{Udefn1}), we have that $U_t^0(s) \ge V_\textnormal{max}$.
But by Assumption \ref{boundedobs}, the updating equation (Step 2b of Figure \ref{alg:vfa2}), and the initialization of $\widebar{V}_t^0(s) \in [0,\Vmax]$, we can easily see that $\widebar{V}_t^n(s) \in [0,V_\textnormal{max}]$ for any $n$ and $s$. Therefore, $\widebar{V}_t^n(s) \le U_t^0(s)$, for any $n$ and $s$, so we can choose $N_t^{0}$ arbitrarily. Let us choose $\tilde{N}_t^{0}(s) = N_{t+1}^0$, and since $N_{t+1}^0$ came from the induction hypothesis for $t+1$, it is also true that $\tilde{N}_t^{0}(s) \ge N^\Pi$.

\subsubsection*{Induction hypothesis, $k$ (within induction on $t$)} Assume for $k \ge 0$ that $\exists \, \tilde{N}_{t}^{k}(s) < \infty$ such that $\tilde{N}_{t}^{k}(s) \ge N_{t+1}^k \ge N^{\Pi}$ and $\forall \, n \ge \tilde{N}_{t}^{k}(s)$, we have 
\begin{equation*}
\widebar{V}_{t}^n(s) \le U_{t}^k(s).
\end{equation*}

Before we begin the inductive step from $k$ to $k+1$, we define some additional sequences and state a few useful lemmas.
\begin{defn}
The \emph{positive incurred noise}, since a starting iteration $m$, is represented by the sequence $W_t^{n,m}(s)$. For $s \in \mathcal S$, it is defined as follows:
\begin{equation*}
\begin{aligned}
W_t^{m,m}(s) &= 0,\\
W_t^{n+1,m}(s) &= \Bigl[\bigl(1-\alpha_t^n(s)\bigr)  \, W_t^{n,m}(s)+\alpha_t^n(s) \, w_t^{n+1}(s)\Bigr]^+ \quad \textnormal{for } n \ge m.
\end{aligned}
\end{equation*}
\end{defn}
The term $W_t^{n+1,m}(s)$ is only updated from $W_t^{n,m}(s)$ when $s=S_t^n$, i.e., on iterations where the state is visited by the algorithm, due to the fact that the stepsize $\alpha_t^n(s) = 0$ whenever $s \ne S_t^n$.
\begin{restatable}{lem}{Wzero}
For any starting iteration $m \ge 0$ and any state $s \in \mathcal S$, under Assumptions \ref{boundedobs}, \ref{wass}, and \ref{Stepassumption}, $W_t^{n,m}(s)$ asymptotically vanishes:
\begin{equation*}
\lim_{n\rightarrow \infty} W_t^{n,m}(s) = 0 \quad a.s.
\end{equation*}
\label{lem:W}
\end{restatable}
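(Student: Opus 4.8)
The plan is to reduce the recursion to a nonnegative almost--supermartingale and invoke the Robbins--Siegmund supermartingale convergence theorem. Throughout I fix the state $s$ and time $t$ and abbreviate $\alpha^n = \alpha_t^n(s)$, $w^{n+1} = w_t^{n+1}(s)$, and $W^n = W_t^{n,m}(s)$; by construction $W^n \ge 0$ for every $n \ge m$, each $W^n$ is $\mathcal F^n$--measurable, and $\alpha^n$ is $\mathcal F^n$--measurable by Assumption \ref{Stepassumption}. First I would record that the noise has bounded conditional second moment using only the listed assumptions: since $\mathbf E[w^{n+1}\mid\mathcal F^n]=0$ by Assumption \ref{wass} and $(H\widebar V^n)_t(s)$ is $\mathcal F^n$--measurable, we have $(H\widebar V^n)_t(s) = \mathbf E[\hat v_t^{n+1}(s)\mid\mathcal F^n]$, which lies in $[0,\Vmax]$ by Assumption \ref{boundedobs}; hence $w^{n+1} = \hat v_t^{n+1}(s) - (H\widebar V^n)_t(s)$ satisfies $|w^{n+1}|\le \Vmax$, so $\mathbf E[(w^{n+1})^2\mid\mathcal F^n]\le \sigma^2$ with $\sigma^2 := \Vmax^2$.

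The key step is to square the recursion and pass to conditional expectations. Using $([\,x\,]^+)^2 \le x^2$ and then expanding $\bigl((1-\alpha^n)W^n + \alpha^n w^{n+1}\bigr)^2$, the cross term vanishes in conditional expectation because $(1-\alpha^n)\alpha^n W^n$ is $\mathcal F^n$--measurable and $\mathbf E[w^{n+1}\mid\mathcal F^n]=0$, which gives
\[
\mathbf E\bigl[(W^{n+1})^2 \,\big|\, \mathcal F^n\bigr] \le (1-\alpha^n)^2 (W^n)^2 + (\alpha^n)^2 \sigma^2 .
\]
Now I would apply the elementary inequality $(1-\alpha)^2 \le 1-\alpha$, valid precisely because $\alpha^n \in [0,1]$, to rewrite this in Robbins--Siegmund form
\[
\mathbf E\bigl[(W^{n+1})^2 \,\big|\, \mathcal F^n\bigr] \le (W^n)^2 - \alpha^n (W^n)^2 + (\alpha^n)^2 \sigma^2 .
\]

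Setting $Z_n = (W^n)^2 \ge 0$, this is an almost--supermartingale with additive term $(\alpha^n)^2\sigma^2$, whose sum is finite almost surely by Assumption \ref{Stepassumption}(ii), and decrement $\alpha^n (W^n)^2$. The Robbins--Siegmund theorem then yields, almost surely, that $Z_n$ converges to a limit $Z_\infty \ge 0$ and that $\sum_n \alpha^n (W^n)^2 < \infty$. To conclude, I would combine this summability with $\sum_n \alpha^n = \infty$ from Assumption \ref{Stepassumption}(i): if $Z_\infty > 0$ on an event of positive probability, then for all large $n$ the term $\alpha^n Z_n$ exceeds a fixed positive multiple of $\alpha^n$, forcing $\sum_n \alpha^n Z_n = \infty$ and contradicting the summability. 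Hence $Z_\infty = 0$ almost surely, i.e. $W_t^{n,m}(s) \to 0$ almost surely.

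The only genuine obstacle is the positive--part operator, which destroys the linear/martingale structure and so blocks treating $W^n$ directly as a weighted martingale sum; the resolution is to pass to the squared process, where the two elementary inequalities $([\,x\,]^+)^2 \le x^2$ and $(1-\alpha)^2 \le 1-\alpha$ neutralize the truncation and produce an honest supermartingale recursion. An alternative, if one wished to avoid Robbins--Siegmund, is to verify by induction the identity $W_t^{n,m}(s) = \max_{m \le \ell \le n} X_t^{n,\ell}(s)$, where $X^{n,\ell}$ is the untruncated linear recursion restarted from $0$ at iteration $\ell$, and then control the maximum uniformly in the restart time $\ell$; since that uniform control is more delicate, I would favor the supermartingale argument above.
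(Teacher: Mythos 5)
Your proof is correct, and it is worth noting that it is fully self-contained where the paper is not: the paper's proof of Lemma \ref{lem:W} is a one-line deferral to Lemma 6.2 of \cite{Nascimento2009a}, which treats the same positive-part smoothing recursion by a direct martingale convergence argument on the (linear) noise recursion --- the natural way such an argument absorbs the truncation is precisely the max-over-restart-times identity that you sketch at the end as your alternative and then set aside. Your main argument is a genuinely different decomposition: you square the process so that the elementary bound $([\,x\,]^+)^2 \le x^2$ erases the truncation entirely, obtain a Robbins--Siegmund almost-supermartingale for $(W_t^{n,m}(s))^2$ with summable additive term $(\alpha_t^n(s))^2\sigma^2$, and then invoke $\sum_n \alpha_t^n(s) = \infty$ to force the limit to be zero. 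The supporting details all check out against the paper's assumptions: the bound $|w_t^{n+1}(s)|\le \Vmax$ does follow from Assumptions \ref{boundedobs} and \ref{wass} exactly as you argue (so no separate conditional-variance assumption is needed); the cross term vanishes because $\alpha_t^n(s)$ and $W_t^{n,m}(s)$ are $\mathcal F^n$--measurable, which is guaranteed by Assumption \ref{Stepassumption} and the construction of the recursion; and the integrability needed for Robbins--Siegmund is immediate, since an induction on the recursion using your noise bound gives $W_t^{n,m}(s)\le \Vmax$ for all $n$ (a half-sentence worth adding). As for what each approach buys: yours needs only the one-step conditional inequality and avoids any uniform control over restart times, making it shorter and arguably more robust; the cited route keeps the linear structure explicit, which yields the slightly stronger conclusion that the untruncated restarted iterates vanish uniformly, though that extra strength is not needed for the way Lemma \ref{lem:W} is used in the convergence proof.
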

\begin{proof}
The proof is analogous to that of Lemma 6.2 in \cite{Nascimento2009a}, which uses a martingale convergence argument.
\end{proof}
To reemphasize the presence of $\omega$, we note that the following definition and the subsequent lemma both use the realization $\tilde{N}_t^{k}(s)(\omega)$ from the $\omega$ chosen at the beginning of the proof.
\begin{defn}
The other auxiliary sequence that we need is $X_t^{n}(s)$ which applies the smoothing step to $(HU^k)_t(s)$. For any state $s \in \mathcal S$, let
\begin{equation*}
\begin{aligned}
X_t^{\tilde{N}_t^{k}(s)}(s) &= U_t^k(s),\\
X_t^{n+1}(s) &= \bigl(1-\alpha_t^n(s)\bigr) \, X_t^{n}(s) + \alpha_t^n(s) \, \bigl(HU^k\bigr)_t(s) \quad \textnormal{for } n \ge \tilde{N}_t^{k}(s).
\end{aligned}
\end{equation*}
\end{defn}
\begin{restatable}{lem}{XW}
For $n \ge \tilde{N}_t^{k}(s)$ and state $s \in \mathcal S_t^-$\hspace{-0.1em},
\begin{equation*}
\widebar{V}_t^n(s) \le X_t^{n}(s) + W_t^{n,\tilde{N}_t^{k}(s)}(s).
\end{equation*}
\label{lem:XW}
\end{restatable}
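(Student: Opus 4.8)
The plan is to prove the inequality by forward induction on $n \ge \tilde N_t^k(s)$, holding fixed $t$, $k$, the state $s \in \mathcal S_t^-$, and the sample point $\omega$ chosen at the start of the proof. Write $m = \tilde N_t^k(s)$ for brevity. The base case $n = m$ is immediate from the definitions: $X_t^m(s) = U_t^k(s)$ and $W_t^{m,m}(s) = 0$, while the induction hypothesis on $k$ (within the induction on $t$) gives $\widebar V_t^m(s) \le U_t^k(s)$, so that $\widebar V_t^m(s) \le X_t^m(s) + W_t^{m,m}(s)$.

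The engine of the inductive step is a uniform one-step upper bound on the noisy observation. Since $m \ge N_{t+1}^k$ and Step 2a gives $\hat v_t^{n+1} = (H\widebar V^n)_t + w_t^{n+1}$, for every $n \ge m$ the induction hypothesis at $t+1$ yields $\widebar V_{t+1}^n \le U_{t+1}^k$ componentwise. Applying part (ii) of Lemma \ref{Hprops} gives $(H\widebar V^n)_t \le (HU^k)_t$, and hence
\[
\hat v_t^{n+1}(s) \le (HU^k)_t(s) + w_t^{n+1}(s).
\]
This is the bound that lets the algorithm's iterate be dominated by the deterministic sequence $X_t^n$ plus the accumulated noise.

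For the step from $n$ to $n+1$ I would split on whether $s$ is visited. If $s = S_t^{n+1}$, then the projection $\Pi_M$ leaves the reference state at its smoothed value, so $\widebar V_t^{n+1}(s) = z_t^{n+1}(s)$ is the convex combination (with stepsize $\alpha_t^{n+1}(s) \in [0,1]$) of $\widebar V_t^n(s)$ and $\hat v_t^{n+1}(s)$. Substituting the induction hypothesis $\widebar V_t^n(s) \le X_t^n(s) + W_t^{n,m}(s)$ together with the one-step bound above, and then regrouping the $X$-terms and the noise-terms separately, reproduces exactly $X_t^{n+1}(s)$ plus $(1-\alpha_t^{n+1}(s))W_t^{n,m}(s) + \alpha_t^{n+1}(s)\,w_t^{n+1}(s)$; bounding the latter by its positive part (using $x \le x^+$) gives $W_t^{n+1,m}(s)$. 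If instead $s \ne S_t^{n+1}$, the stepsize for this transition vanishes, so $X_t^{n+1}(s) = X_t^n(s)$ and $W_t^{n+1,m}(s) = W_t^{n,m}(s)$; here the essential point is that $s \in \mathcal S_t^-$ together with $n+1 > N^\Pi$ forces $s$ to be never \emph{increased} by $\Pi_M$ on this iteration, so $\widebar V_t^{n+1}(s) \le \widebar V_t^n(s)$, and the induction hypothesis closes the step.

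The main obstacle is precisely this non-visited case. The conclusion $\widebar V_t^{n+1}(s) \le \widebar V_t^n(s)$ is what keeps $X_t^n(s) + W_t^{n,m}(s)$ a valid upper bound, and it is exactly here that membership $s \in \mathcal S_t^-$ and the defining property of $N^\Pi$ are indispensable: for a state increased infinitely often, $\Pi_M$ could push the value upward and the bound would break. The remaining care is bookkeeping — confirming that $\Pi_M$ fixes the visited state's value to $z_t^{n+1}(s)$ so that no extra upward correction enters the visited case, and using $x \le x^+$ to absorb the martingale-difference increment into the nonnegative sequence $W_t^{n+1,m}(s)$.
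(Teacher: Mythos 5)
Your proof is correct and follows essentially the same route as the paper's: forward induction on $n$ starting from the base case $X_t^{\tilde N_t^k(s)}(s) = U_t^k(s)$, $W_t^{\tilde N_t^k(s),\tilde N_t^k(s)}(s)=0$, the preliminary bound $(H\widebar V^n)_t(s) \le (HU^k)_t(s)$ obtained from the induction hypothesis at $t+1$ together with part (ii) of Lemma \ref{Hprops}, a visited/non-visited case split in the inductive step, and the use of $s \in \mathcal S_t^-$ with the defining property of $N^\Pi$ to rule out upward projections when $s$ is not visited. If anything, your explicit use of $x \le x^+$ to absorb the noise increment into $W_t^{n+1,\tilde N_t^k(s)}(s)$ is slightly more careful than the paper's final step in the visited case, which writes an equality where an inequality is what actually holds.
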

\begin{proof}
See Appendix \ref{sec:appendix}.
\end{proof}
\subsubsection*{Inductive step from $k$ to $k+1$} If $U_t^k(s)=\bigl(HU^k\bigr)_t(s)$, then by Lemma \ref{lem:1}, we see that $U_t^k(s)=U_t^{k+1}(s)$ so $\widebar{V}_t^n \le U_t^k(s) \le U_t^{k+1}(s)$ for any $n \ge \tilde{N}_t^{k}(s)$ and the proof is complete. Since we know that $(HU^k)_t(s) \le U_t^k(s)$ by Lemma \ref{lem:1}, we can now assume that $s \in \mathcal K$, where
\begin{equation*}
\mathcal K = \Bigl\{s' \in \mathcal S: \bigl(HU^k\bigr)_t(s') < U_t^k(s') \Bigr\}.
\end{equation*}
In this case, we can define
\begin{equation*}
\displaystyle
\delta_t^k = \min_{s\in\mathcal S_t^- \cap \, \mathcal K}\left(\frac{U_t^k(s)-\bigl(HU^k\bigr)_t(s)}{4}\right) > 0.
\end{equation*}
Choose $\tilde{N}_t^{k+1}(s) \ge \tilde{N}_t^{k}(s)$ such that
\begin{equation*}
\prod_{n=\tilde{N}_t^{k}(s)}^{\tilde{N}_t^{k+1}(s)-1} \bigl(1-\alpha_t^n(s)\bigr) \le \frac 14
\end{equation*}
and for all $n \ge \tilde{N}_t^{k+1}(s)$,
\begin{equation*}
W_t^{n,\tilde{N}_t^{k}(s)}(s) \le \delta_t^k.
\end{equation*}
$\tilde{N}_t^{k+1}(s)$ clearly exists because both sequences converge to zero, by Lemma \ref{lem:alphaprod} and Lemma \ref{lem:W}. Recursively using the definition of $X_t^n(s)$, we get that
\begin{equation*}
X_t^n(s) = \beta_t^n(s) \, U_t^k(s) + \bigl(1-\beta_t^n(s) \bigr) \, \bigl(HU^k\bigr)_t(s),
\end{equation*}
where $\beta_t^n(s) = \prod_{l=\tilde{N}_t^{k}(s)}^{n-1}(1-\alpha_t^l(s))$. Notice that for $n \ge \tilde{N}_t^{k+1}(s)$, we know that $\beta_t^n(s) \le \frac 14$, so we can write
\begin{align}
X_t^n(s) &= \beta_t^n(s) \, U_t^k(s) + \bigl(1-\beta_t^n(s)\bigr) \,\bigl(HU^k\bigr)_t(s)\nonumber\\
&= \beta_t^n(s) \, \Bigl[ U_t^k(s) - \bigl(HU^k\bigr)_t(s) \Bigr] + \bigl(HU^k\bigr)_t(s)\nonumber\\
&\le \frac 14 \, U_t^k(s) + \frac 34 \, \bigl(HU^k\big)_t(s)\nonumber\\
&=\frac 12 \, \Bigl[  U_t^k(s) + \bigl(HU^k\bigr)_t(s)\Bigr]-\frac 14 \, \Bigl[U_t^k(s) - \bigl(HU^k\bigr)_t(s)\Bigr]\nonumber\\
&\le U_t^{k+1}(s) - \delta_t^k. \label{eq:Xdelta}
\end{align}
We can apply Lemma \ref{lem:XW} and (\ref{eq:Xdelta}) to get
\begin{equation*}
\begin{aligned}
\widebar{V}_t^n(s) &\le X_t^n(s) + W_t^{n,\tilde{N}_t^{k}(s)}(s)\\
&\le (U_t^{k+1}(s) - \delta_t^k)+\delta_t^k\\
&= U_t^{k+1}(s),
\end{aligned}
\end{equation*}
for all $n \ge \tilde{N}_t^{k+1}(s)$.
Thus, the inductive step from $k$ to $k+1$ is complete.
\subsubsection*{Case 2: $s \in \mathcal S \setminus \mathcal S_t^-$} Recall that we are still in the inductive step from $t+1$ to $t$ (where the hypothesis was the existence of $N_{t+1}^k$). As previously mentioned, the proof for this case relies on an induction--like argument over the tree $T_t^-(s)$.
The following lemma is the core of our argument, and the proof is provided below.
\begin{restatable}{lem}{treeinduction}
Consider some $k \ge 0$ and a node $j$ of $T_t^-(s)$ with value $s_j \in \mathcal S \setminus \mathcal S_t^-$ and let the $C_j \ge 1$ child nodes of $j$ be denoted by the set $\{s_{j,1},s_{j,2},\ldots,s_{j,C_j}\}$. Suppose that for each $s_{j,c}$ where $1 \le c \le C_j$, we have that $\exists \, \tilde{N}_t^{k}(s_{j,c}) < \infty$ such that $\forall \, n \ge \tilde{N}_t^{k}(s_{j,c})$,
\begin{equation}
\widebar{V}_t^n(s_{j,c}) \le U_t^k(s_{j,c}).
\label{cond:1}
\end{equation}
Then, $\exists \, \tilde{N}_t^{k}(s_{j}) < \infty$ such that $\forall \, n \ge \tilde{N}_t^{k}(s_{j})$,
\begin{equation*}
\widebar{V}_t^n(s_j) \le U_t^k(s_j).
\end{equation*}
\label{treeinductionlem}
\end{restatable}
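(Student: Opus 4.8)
The plan is to exploit the tree structure directly. Since the node value $s_j \in \mathcal S \setminus \mathcal S_t^-$ is increased by $\Pi_M$ infinitely often, and since the child nodes $\{s_{j,1},\ldots,s_{j,C_j}\}$ are by construction precisely the lower immediate neighbors $\mathcal S_L(s_j)$, I would control the three distinct ways in which $\widebar V_t^n(s_j)$ can change and verify that each respects the target $U_t^k(s_j)$ once $n$ is large. Set $M = \max_c \tilde N_t^k(s_{j,c})$, so that for $n \ge M$ every child already satisfies (\ref{cond:1}). Because $s_{j,c} \preceq s_j$, Lemma \ref{lem:monoU} gives $U_t^k(s_{j,c}) \le U_t^k(s_j)$; this is the key monotonicity fact that ties the children's bounds to the parent's.

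First I would dispose of the projection updates. On any iteration $n \ge M$ on which $s_j$ is increased by $\Pi_M$, Lemma \ref{lem:lowerimmediate} produces a lower immediate neighbor (hence a child) $s_{j,c}$ with $\widebar V_t^n(s_{j,c}) = \widebar V_t^n(s_j)$, so that $\widebar V_t^n(s_j) = \widebar V_t^n(s_{j,c}) \le U_t^k(s_{j,c}) \le U_t^k(s_j)$; thus every increase automatically respects the bound. Iterations on which $s_j$ is decreased by $\Pi_M$, or left unchanged, only lower or preserve the estimate and are therefore harmless for an upper bound.

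The remaining mechanism is a direct stochastic-approximation update, occurring when $s_j = S_t^n$. Here I would reuse the machinery of Case 1: the induction hypothesis at $t+1$, together with monotonicity of $H$ (Lemma \ref{Hprops}, part (ii)) and Lemma \ref{lem:1}, yields $(H\widebar V^{n-1})_t(s_j) \le (HU^k)_t(s_j) \le U_t^k(s_j)$, so that the observations drive the estimate toward a quantity no larger than the target. Introducing the smoothed sequence $X_t^n(s_j)$ anchored at $(HU^k)_t(s_j)$ and the incurred-noise sequence $W_t^{n,m}(s_j)$, an argument paralleling Lemma \ref{lem:XW} bounds the estimate by $X_t^n(s_j) + W_t^{n,m}(s_j)$ on any run of iterations free of increases, while the slack $\delta_t^k$ and the vanishing of $W$ (Lemma \ref{lem:W}) and of $\prod (1-\alpha_t^n(s_j))$ (Lemma \ref{lem:alphaprod}) force this quantity below $U_t^k(s_j)$ for $n$ large. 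Choosing $\tilde N_t^k(s_j)$ larger than $M$ and than the relevant Case-1 thresholds, I would conclude $\widebar V_t^n(s_j) \le U_t^k(s_j)$ for all $n \ge \tilde N_t^k(s_j)$.

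The main obstacle is the interaction between the infinitely many projection increases and the smoothing analysis: each increase \emph{resets} the estimate at $s_j$ to a child's value, interrupting the clean single-anchor drift of $X_t^n$ that makes Case 1 go through. The delicate point is to confirm that such a reset can never push $\widebar V_t^n(s_j)$ above $U_t^k(s_j)$ --- which is exactly what Lemmas \ref{lem:lowerimmediate} and \ref{lem:monoU} guarantee --- and then to argue that the post-reset accumulated noise remains within the slack $\delta_t^k$ uniformly over the infinitely many reset times, so that the hard inequality, and not merely a $\limsup$ bound, is preserved.
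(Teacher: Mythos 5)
Your first half is sound and essentially coincides with the paper's Step 1: projection increases are harmless because Lemma \ref{lem:lowerimmediate} produces a child $s_{j,c}$ with $\widebar{V}_t^n(s_j) = \widebar{V}_t^n(s_{j,c}) \le U_t^k(s_{j,c}) \le U_t^k(s_j)$ (the last step by Lemma \ref{lem:monoU}); direct updates are controlled by $(H\widebar{V}^n)_t(s_j) \le (HU^k)_t(s_j) \le U_t^k(s_j)$; decreases and non-visits are trivial. Note, though, that the paper implements this more simply than you propose: instead of a drifting anchor $X_t^n$ that must be re-anchored at every one of the infinitely many resets, it fixes the \emph{single} anchor $\tilde{N}$ (the first projection increase after all children's thresholds) and maintains the one inequality $\widebar{V}_t^n(s_j) \le U_t^k(s_j) + W_t^{n,\tilde{N}}(s_j)$ for all $n \ge \tilde{N}$; resets preserve it simply because $W_t^{n,\tilde{N}}(s_j) \ge 0$, and Lemma \ref{lem:W} then yields the soft bound: for every $\epsilon > 0$, eventually $\widebar{V}_t^n(s_j) \le U_t^k(s_j) + \epsilon$. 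Up to that soft bound, your plan can be patched to work (the sequence $X_t^n$ and the slack $\delta_t^k$ are really Case-1 devices and are not needed here).

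The genuine gap is your final step, where you hope the hard inequality follows because ``the post-reset accumulated noise remains within the slack $\delta_t^k$ uniformly over the infinitely many reset times.'' This cannot be made to work. First, the slack at $s_j$ is $U_t^k(s_j) - (HU^k)_t(s_j)$, which may be zero: $s_j$ need not lie in $\mathcal K$, and $\delta_t^k$ is by definition a minimum over $\mathcal S_t^- \cap \mathcal K$ only, so it says nothing about $s_j \in \mathcal S \setminus \mathcal S_t^-$. (In Case 1 a zero gap is harmless because there the slack argument proves the bound for $k+1$ and a zero gap forces $U_t^k(s_j) = U_t^{k+1}(s_j)$; here your conclusion is for the \emph{same} $k$, so that escape is unavailable.) Second, and more fundamentally, slack must be \emph{accumulated} through repeated smoothing: immediately after each reset your re-anchored $X_t^n(s_j)$ sits at $U_t^k(s_j)$ with no slack built up, so a single direct update with positive noise gives only $\widebar{V}_t^{n+1}(s_j) \le U_t^k(s_j) + \alpha_t^{n+1}(s_j)\, w_t^{n+1}(s_j)$, which can exceed $U_t^k(s_j)$. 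Since resets occur infinitely often and the pointwise noise $w_t^{n}(s_j)$ does not vanish (only the stepsize-weighted accumulation $W$ does), this technique can show at best that the exceedances vanish asymptotically --- a $\limsup$ bound --- never the hard inequality. The paper's missing idea is of a different nature: set $\epsilon = U_t^k(s_j) - V_t^*(s_j) > 0$, use $U_t^{k'}(s_j) \searrow V_t^*(s_j)$ to choose $k' > k$ with $U_t^k(s_j) - U_t^{k'}(s_j) > \epsilon/2$, and apply the \emph{soft} bound at level $k'$ to get, eventually, $\widebar{V}_t^n(s_j) \le U_t^{k'}(s_j) + \epsilon/2 \le U_t^k(s_j)$. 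The hard inequality at level $k$ is purchased with the soft inequality at a strictly deeper level $k'$, not with slack at level $k$; this is legitimate in context because the tree-climbing algorithm supplies the hypothesis (\ref{cond:1}) for every $k$, so Step 1 is available at level $k'$ as well.
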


\begin{proof} First, note that by the induction hypothesis, part $(ii)$ of Lemma $\ref{Hprops}$, and Lemma \ref{lem:1}, we have the inequality
\begin{equation}
\bigl(H\widebar{V}^n\bigr)_t(s) \le \bigl(HU^k\bigr)_t(s) \le U_t^k(s).
\label{ineq:1_2}
\end{equation}
We break the proof into several steps.\\
\emph{Step 1.} Let us consider the iteration $\tilde{N}$ defined by
\begin{equation*}
\displaystyle
\tilde{N} = \min \left(n \in \mathcal N_t^{\Pi-}(s_j): n \ge \max_{c} \tilde{N}_t^{k}(s_{j,c}) \right),
\end{equation*}
which exists because $s_j \in \mathcal S \setminus \mathcal S_t^-$ and is increased infinitely often. This means that $\Pi_M$ increased the value of state $s_j$ on iteration $\tilde{N}$. As the first step, we show that $\forall \, n \ge \tilde{N}$,
\begin{equation}
\widebar{V}_t^n(s_j) \le U_t^k(s_j)+W_t^{n,\tilde{N}}(s_j),
\label{step1}
\end{equation}
using an induction argument.\\
\emph{Base case, $n = \tilde{N}$.} 
Using Lemma \ref{lem:lowerimmediate}, we know that for some $c \in \{1,2,\ldots,C_j\}$, we have:
\begin{equation*}
\begin{aligned}
\widebar{V}_t^n(s_j) = \widebar{V}_t^n(s_{j,c}) &\le U_t^k(s_{j,c})\\
&\le U_t^k(s_j)+W_t^{n,\tilde{N}}(s_j).
\end{aligned}
\end{equation*}
The fact that $\tilde{N} \ge \tilde{N}_t^{k}(s_{j,c})$ for every $c$ justifies the first inequality and the second inequality above follows from the monotonicity within $U^k$ (see Lemma \ref{lem:monoU}) and the fact that $W_t^{\tilde{N},\tilde{N}}(s_j)=0$.\vspace{0.2em}\\
\emph{Induction hypothesis, $n$.} Suppose (\ref{step1}) is true for $n$ where $n \ge \tilde{N}$.\vspace{0.2em}\\
\emph{Inductive step from $n$ to $n+1$.} Consider the following two cases:
\begin{enumerate}[label=(\Roman*),labelindent=1in]
\item Suppose $n+1 \in \mathcal N_t^{\Pi-}(s_j)$. The proof for this is exactly the same as for the base case, except we use $W_t^{n+1,\tilde{N}}(s_j) \ge 0$ to show the inequality. Again, this step depends heavily on Lemma \ref{lem:lowerimmediate} and on the fact that every child node represents a state that satisfies (\ref{cond:1}).
\item Suppose $n+1 \not \in \mathcal N_t^{\Pi-}(s_j)$. There are again two cases to consider:
\begin{enumerate}[label=(\Alph*),labelindent=1in]
\item Suppose $S_t^{n+1} = s_j$. Then,
\begin{align}
\widebar{V}_t^{n+1}(s_j) &= z_t^{n+1}(s_j)\nonumber\\
&=\bigl(1-\alpha_t^{n+1}(s_j)\bigr) \, \widebar{V}_t^n(s_j) + \alpha_t^{n+1}(s_j) \, \hat{v}_t^{n+1}(s_j) \nonumber\\
&\le\begin{aligned}[t]\bigl(1 &-\alpha_t^{n+1}(s_j)\bigr) \, \Bigl(U_t^k(s_j) + W_t^{n,\tilde{N}}(s_j)\Bigr) \\
&+ \alpha_t^{n+1}(s_j)\Bigl[ \bigl(H\widebar{V}^n\bigr)_t(s_j) +w_t^{n+1}(s_j) \Bigr]\end{aligned} \nonumber\\
&\le U_t^k(s_j)+W_t^{n+1,\tilde{N}}(s_j),\nonumber
\end{align}
where the first inequality follows from the induction hypothesis for $n$ and the second inequality follows by (\ref{ineq:1_2}).
\item Suppose $S_t^{n+1} \ne s_j$. This means the stepsize $\alpha_t^{n+1}(s_j) = 0$, which in turn implies the noise sequence remains unchanged: $W_t^{n+1,\tilde{N}}(s_j) = W_t^{n,\tilde{N}}(s_j)$. Because the value of $s_j$ is not increased at $n+1$,
\begin{align*}
\widebar{V}_t^{n+1}(s_j) &\le \widebar{V}_t^n(s_j)\\
&\le U_t^k(s_j)+W_t^{n,\tilde{N}}(s_j)\\
&= U_t^k(s_j)+W_t^{n+1,\tilde{N}}(s_j).
\end{align*}
\end{enumerate}
\end{enumerate}
\emph{Step 2.} By Lemma \ref{lem:W}, we know that $W_t^{n,\tilde{N}}(s_j) \rightarrow 0$ and thus, $\exists \, \tilde{N}_t^{k,\epsilon}(s_j) < \infty$ such that $\forall \, n \ge \tilde{N}_t^{k,\epsilon}(s_j)$,
\begin{equation}
\widebar{V}_t^n(s_j) \le U_t^k(s_j) + \epsilon.
\label{Neps}
\end{equation}
Let $\epsilon = U_t^k(s_j)-V_t^*(s_j) > 0$. Since $U_t^k(s_j) \searrow V_t^*(s_j)$, we also have that $\exists \, k' > k$ such that,
\begin{equation*}
U_t^{k'}(s_j)-V_t^*(s_j) < \epsilon/2.
\end{equation*}
Combining with the definition of $\epsilon$, we have
\begin{equation*}
U_t^k(s_j)-U_t^{k'}(s_j) > \epsilon/2.
\end{equation*}
Applying (\ref{Neps}), we know that $\exists \, \tilde{N}_t^{k'\!,\epsilon/2}(s_j) < \infty$ such that $\forall \, n \ge \tilde{N}_t^{k'\!,\epsilon/2}(s_j)$,
\begin{align}
\widebar{V}_t^n(s_j) &\le U_t^{k'}(s_j) + \epsilon/2\nonumber \\
&\le U_t^k(s_j)-\epsilon/2+\epsilon/2 \nonumber \\
&\le U_t^k(s_j). \nonumber
\end{align}
Therefore, we can choose $\tilde{N}_t^{k}(s_{j}) = N_t^{k'\!,\epsilon/2}(s_j)$ to conclude the proof.
\end{proof}
We now present a simple algorithm that incorporates the use of Lemma \ref{treeinductionlem} in order to obtain $\tilde{N}_t^k(s)$ when $s \in \mathcal S \setminus \mathcal S_t^-$. Denote the \emph{height} (longest path from root to leaf) of $T_t^-(s)$ by $H^-_t(s)$. The \emph{depth} of a node $j$ is the length of the path from the root node to $j$.
\begin{description}[leftmargin=4.2em,style=nextline]
\item[Step 0.] Set $h = H_t^-(s)-1$. The child nodes of any node of depth $h$ are leaf nodes that represent states in $\mathcal S_t^-$ --- the conditions of Lemma \ref{treeinductionlem} are thus satisfied by \emph{Case 1}.
\item[Step 1.] Consider all nodes $j_h$ (with value $s_h$) of depth $h$ in $T_t^-(s)$. An application of Lemma \ref{treeinductionlem} results in $\tilde{N}_t^k(s_h)$ such that $\widebar{V}_t^n(s_h) \le U_t^k(s_h)$ for all $n \ge \tilde{N}_t^k(s_h)$.
\item[Step 2.] If $h = 0$, we are done. Otherwise, decrement $h$ and note that once again, the conditions of Lemma \ref{treeinductionlem} are satisfied for any node of depth $h$. Return to \textbf{Step 1}.
\end{description}
At the completion of this algorithm, we have the desired $\tilde{N}_t^k(s)$ for $s \in \mathcal S \setminus \mathcal S_t^-$. By its construction, we see that $\tilde{N}_t^k(s) \ge N^\Pi$, and the final step of the inductive step from $t+1$ to $t$ is to define $N_t^k = \max_{s \in \mathcal S} \tilde{N}_t^k(s)$. The proof is complete.
\end{proof}

\section{Numerical Results}
\label{sec:numerical}
Theoretically, we have shown that Monotone--ADP is an asymptotically convergent algorithm. In this section, we discuss its empirical performance. There are two main questions that we aim to answer using numerical examples:
\begin{enumerate}
\item How much does the monotonicity preservation operator, $\Pi_M$, increase the rate of convergence, compared to other popular approximate dynamic programming or reinforcement learning algorithms?
\item How much computation can we save by solving a problem to near--optimality using Monotone--ADP compared to solving it to full optimality using backward dynamic programming?
\end{enumerate}

To provide insight into these questions, we compare Monotone--ADP against four ADP algorithms from the literature (kernel--based reinforcement learning, approximate policy iteration with polynomial basis functions, asynchronous value iteration, and $Q$--learning) across a set of fully benchmarked problems from three distinct applications (optimal stopping, energy allocation/storage, and glycemic control for diabetes patients). Throughout our numerical work, we assume that the model is known and thus compute the observations $\hat{v}_t^n$ using (\ref{eq:vhat1}). For Monotone--ADP, asynchronous value iteration, and $Q$--learning, the sampling policy we use is the $\varepsilon$--greedy exploration policy (explore with probability $\varepsilon$, follow current policy otherwise) --- we found that across our range of problems, choosing a relatively large $\varepsilon$ (e.g., $\varepsilon \in  [0.5,1]$) generally produced the best results. These same three algorithms also require the use of a stepsize, and in all cases we use the adaptive stepsize derived in \cite{George2006}. Moreover, note that since we are interested in comparing the performance of approximation algorithms against optimal benchmarks, it is necessary to sacrifice a bit of realism and discretize the distribution of $W_{t+1}$ so that an exact optimal solution can be obtained using backward dynamic programming (BDP). However, this certainly is not necessary in practice; Monotone--ADP handles continuous distributions of $W_{t+1}$ perfectly well, especially if (\ref{eq:vhat2}) and (\ref{eq:vhat3}) are used.

Before moving on to the applications, let us briefly introduce each of the algorithms that we use in the numerical work. For succinctness, we omit step--by--step descriptions of the algorithms and instead point the reader to external references.

\subsection*{Kernel--Based Reinforcement Learning (KBRL)} Kernel regression, which dates back to \cite{Nadaraya1964}, is arguably the most widely used nonparametric technique in statistics. \cite{Ormoneit2002} develops this powerful idea in the context of approximating value functions using an approximate value iteration algorithm --- essentially, the Bellman operator is replaced with a kernel--based approximate Bellman operator. For our finite--horizon case, the algorithm works backwards from $t=T-1$ to produce kernel--based approximations of the value function at each $t$, using a fixed--size batch of observations each time. The most attractive feature of this algorithm is that no structure whatsoever needs to be known about the value function, and in general, a decent policy can be found. One major drawback, however, is the fact that KBRL cannot be implemented in a recursive way; the number of observations used per time--period needs to be specified beforehand, and if the resulting policy is poor, then KBRL needs to completely start over with a larger number of observations. A second major drawback is \emph{bandwidth selection} --- in our empirical work, the majority of the time spent with this algorithm was focused on tuning the bandwidth, with guidance from various ``rules of thumb,'' such as the one found in \cite{Scott1992}. Our implementation uses the popular \emph{Gaussian kernel}, given by
\[
K(s,s')  = \frac{1}{2\pi} \, \exp\left(\frac{\|s -s' \|_2^2}{2b}\right),
\]
where $s$ and $s'$ are two states and $b$ the bandwidth (tuned separately for each problem). The detailed description of the algorithm can be found in the original paper, \cite{Ormoneit2002}.
\subsection*{Approximate Policy Iteration with Polynomial Basis Functions (API)} Based on the exact policy iteration algorithm (which is well--known to possess finite time convergence), certain forms of approximate policy iteration has been applied successfully in a number of real applications (\cite{Bertsekas2011} provides an excellent survey). The basis functions that we employ are all possible monomials up to degree 2 over all dimensions of the state variable (i.e., we allow interactions between every pair of dimensions). Due to the fact that there are typically a small number of basis functions, policies produced by API are very fast to evaluate, regardless of the size of the state space. The exact implementation of our API algorithm, specialized for finite--horizon problems, is given in \cite[Section 10.5]{Powell2011}. One drawback of API that we observed is an inadequate exploration of the state space for certain problems; even if the initial state $S_0$ is fully randomized, the coverage of the state space in a much later time period, $t' \gg 0$, may be sparse. To combat this issue, we introduced artificial exploration to time periods $t'$ with poor coverage by adding simulations that started at $t'$ rather than 0. The second drawback is that we observed what we believe to be \emph{policy oscillations}, where the policies go from good to poor in an oscillating manner. This issue is not well understood in the literature but is discussed briefly in \cite{Bertsekas2011}.
\subsection*{Asynchronous Value Iteration (AVI)} This algorithm is an elementary algorithm of approximate dynamic programming/reinforcement learning. As the basis for Monotone--ADP, we include it in our comparisons to illustrate the utility of $\Pi_M$. In fact, AVI can be recovered from Monotone--ADP by simply removing the monotonicity preservation step; it is a lookup table based algorithm where only one state (per time period) is updated at every iteration. More details can be found in \cite[Section 4.5]{Sutton1998}, \cite[Section 2.6]{Bertsekas2007}, or \cite[Section 10.2]{Powell2011}.
\subsection*{$Q$--learning (QL)} Due to \cite{Watkins1992}, this reinforcement--learning algorithm estimates the values of state--action pairs, rather than just the state, in order to handle the model--free situation. Its crucial drawback however is it can only be applied in problems with very small action spaces --- for this reason, we only show results for $Q$--learning in the context of our optimal stopping application, where the size of the action space is two. In order to make the comparison between $Q$--learning and the algorithms as fair as possible, we improve the performance of $Q$--learning by taking advantage of our \emph{known model} and compute the expectation at every iteration (as we do in AVI). This slight change from the original formulation, of course, does not alter its convergence properties.
\subsection*{Backward Dynamic Programming (BDP)} This is the well--known, standard procedure for solving finite--horizon MDPs. Using significant computational resources, we employ BDP to obtain the optimal benchmarks in each of the example applications in this section. A description of this algorithm, which is also known as \emph{backward induction} or \emph{finite--horizon value iteration}, can be found in \cite{Puterman}.

\subsection{Evaluating Policies}
We follow the method in \cite{Powell2011} for evaluating the policies generated by the algorithms. By the principle of dynamic programming, for particular value functions $V \in \mathbb R^D$, the decision function at time $t$, $A_t:\mathcal S \rightarrow \mathcal A$ can be written as
\begin{equation*}
A_t(s) = \argmax_{a \in \mathcal A} \; \Bigl[C_t(s,a) + \mathbf{E}\bigl[ V_{t+1}(S_{t+1}) \,|\, S_t=s,\,a_t=a \bigr]\Bigr].
\end{equation*}
To evaluate the \emph{value of a policy} (i.e., the expected contribution over the time horizon) using simulation, we take a test set of $L=1000$ sample paths, denoted $\hat{\Omega}$, compute the contribution for each $\omega \in \hat{\Omega}$ and take the empirical mean:
\begin{equation*}
F(V,\omega) = \sum_{t=0}^T C_t \bigl(S_t(\omega), A_t(S_t(\omega)) \bigr) \quad \mbox{and} \quad \bar{F}(V) = L^{-1} \sum_{\omega \in \hat{\Omega}} F(V,\omega).
\end{equation*}
For each problem instance, we compute the optimal policy using backward dynamic programming. We then compare the performance of approximate policies generated by each of the above approximation algorithms to that of the optimal policy (given by $V_0(S_0)$), as a percentage of optimality.


We remark that although a complete numerical study is not the main focus of this paper, the results below do indeed show that Monotone--ADP provides benefits in each of these nontrivial problem settings. 

\subsection{Regenerative Optimal Stopping}
We now present a classical example application from the fields of operations research, mathematics, and economics: regenerative optimal stopping (also known as \emph{optimal replacement} or \emph{optimal maintenance}; see \cite{Pierskalla1976} for a review). The optimal stopping model described in this paper is inspired by that of \cite{Feldstein1974}, \cite{Rust1987}, and \cite{Kurt2010a} and is easily generalizable to higher dimensions. We consider the decision problem of a firm that holds a depreciating asset that it needs to either sell or replace (known formally as \emph{replacement investment}), with the depreciation rate determined by various other economic factors (giving a multidimensional state variable). The competing forces can be summarized to be the revenue generated from the asset (i.e., production of goods, tax breaks) versus the cost of replacement and the financial penalty when the asset's value reaches zero.

Consider a depreciating asset whose value over discrete time indices $t \in \{ 0,1,\ldots,T\}$ is given by a process $\{X_t\}_{t=0}^T$ where $X_t \in \mathcal X = \{0,1,\ldots,X_\textnormal{max}\}$. Let $\{Y_t\}_{t=0}^T$ with $Y_t = (Y_t^i)_{i=1}^{n-1} \in \mathcal Y$ describe external economic factors that affect the depreciation process of the asset. Each component $Y_t^i \in \{0,1,\ldots,Y^i_\textnormal{max}\}$ contributes to the overall depreciation of the asset. The asset's value either remains the same or decreases during each time period $t$. Assume that for each $i$, higher values of the factor $Y_t^i$ correspond to more positive influences on the value of the asset. In other words, the probability of its value depreciating between time $t$ and $t+1$ increases as $Y_t^i$ decreases.

Let $S_t = (X_t,Y_t) \in \mathcal S = \mathcal X \times \mathcal Y$ be the $n$--dimensional state variable. When $X_t > 0$, we earn a revenue of $P$ for some $P>0$, and when the asset becomes worthless (i.e., when $X_t=0$), we suffer a penalty of $-F$ for some $F>0$. At each stage, we can either choose to replace the asset by taking action $a_t=1$ for some cost $r(X_t,Y_t)$, which is nonincreasing in $X_t$ and $Y_t$, or do nothing by taking action $a_t=0$ (therefore, $\mathcal A = \{0,1\}$). Note that when the asset becomes worthless, we are forced to pay the penalty $F$ in addition to the replacement cost $r(X_t,Y_t)$. Therefore, we can specify the following contribution function:
\begin{equation*}
C_t(S_t,a_t) = P \cdot \indicate{X_t > 0} - F \cdot \indicate{X_t=0} - r(X_t,Y_t)\, \left(1-\indicate{a_t=0} \, \indicate{X_t>0}\right).
\end{equation*}

Let $f^-:\mathcal X \times \mathcal Y \rightarrow [0,1]$ be a nonincreasing function in all of its arguments. $X_t$ obeys the following dynamics. If $X_t = 0$ or if $a_t=1$, then $X_{t+1} = X_\textnormal{max}$ with probability 1 (regeneration or replacement). Otherwise, $X_t$ decreases with probability $f^-(X_t,Y_t)$ or stays at the same level with probability $1-f^-(X_t,Y_t)$. The transition function is written:
\begin{equation*} 
\begin{aligned}
X_{t+1} =  \Bigl(X_t &\cdot \indicate{U_{t+1} > f(X_t,Y_t)} + [X_t - \epsilon_{t+1}^X]^+ \cdot \indicate{U_{t+1} \le f(X_t,Y_t)}  \Bigr) \, \indicate{a_t=0} \, \indicate{X_t >0} \\
&+X_\textnormal{max} \, \left (1-\indicate{a_t=0} \, \indicate{X_t>0} \right),
\end{aligned}
\end{equation*}
where $U_{t+1}$ are i.i.d.\ uniform random variables over the interval $[0,1]$ and $\epsilon_{t+1}^X$ are i.i.d.\ \emph{discrete} uniform random variables over $\{1,2,\ldots,\epsilon_\textnormal{max}\}$. The first part of the transition function covers the case where we wait ($a_t=0$) and the asset still has value ($X_t>0$); depending on the outcome of $U_{t+1}$, its value either remains at its current level or depreciates by some random amount $\epsilon_{t+1}^X$. The second part of the formula reverts $X_{t+1}$ to $X_\textnormal{max}$ whenever $a_t=1$ or $X_t=0$. 

Let $Y_t^i$ evolve stochastically such that if $a_t=0$ and $X_t>0$, $Y_{t+1}^i \le Y_t^i$ with probability 1. Otherwise, the external factors also reset: $Y_{t+1}^i=Y^i_\textnormal{max}$:
\begin{equation*}
Y_{t+1}^i = [Y^i_t - \epsilon_{t+1}^i]^+ \cdot \indicate{a_t=0} \, \indicate{X_t >0} + Y^i_\textnormal{max} \cdot \left (1-\indicate{a_t=0} \, \indicate{X_t>0} \right),
\end{equation*}
where $\epsilon_{t+1}^i$ are i.i.d. (across $i$ and $t$) Bernoulli with a fixed parameter $p^i$. Thus, we take the information process to be $W_{t+1} = (U_{t+1}, \epsilon_{t+1}^X,\epsilon^1_{t+1},\epsilon_{t+1}^2,\ldots,\epsilon_{t+1}^n)$, which is independent of $S_t$. Moreover, we take $C_T(x,y) = 0$ for all $(x,y) \in \mathcal X \times \mathcal Y$. The following proposition establishes that Assumption \ref{ass:Emono} is satisfied.

\begin{restatable}{prop}{replmono}
Under the regenerative optimal stopping model, define the Bellman operator $H$ as in (\ref{Hdef}) and let $\preceq$ be the componentwise inequality over all dimensions of the state space. Then, Assumption \ref{ass:Emono} is satisfied. In particular, this implies that the optimal value function is monotone: for each $t \le T$, $V_t^*(X_t,Y_t)$ is nondecreasing in both $X_t$ and $Y_t$ (i.e., in all $n$ dimensions of the state variable $S_t$).
\label{prop:replmono}
\end{restatable}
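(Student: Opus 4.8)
The plan is to establish Assumption \ref{ass:Emono} and then invoke the backward induction it enables: since $C_T \equiv 0$ is trivially order-preserving, part (i) holds immediately, so it remains to verify part (ii), namely that whenever $V_{t+1}$ is monotone in $(X_{t+1},Y_{t+1})$ the function $(HV)_t$ is monotone in $(X_t,Y_t)$. The monotonicity of $V_t^*$ in all $n$ coordinates then follows by backward induction, using that $V^*$ satisfies $V^*=HV^*$ (Lemma \ref{Hprops}(iii)). I would first emphasize that a direct appeal to Proposition \ref{mono_cond_one} is \emph{not} available: the regeneration rule sends $X_t=0$ to $X_{\max}$, so the state transition function fails to be monotone across the boundary $X_t=0$ (the smallest asset value jumps to the largest one). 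Hypothesis (i) of Proposition \ref{mono_cond_one} is therefore violated, and this discontinuity must be handled by hand.

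The core of the argument is a case analysis that, given $s=(X,Y)\preceq s'=(X',Y')$, bounds $(HV)_t(s)$ by $(HV)_t(s')$ by exhibiting, for the best action at $s$, an action at $s'$ that does at least as well. It is convenient to record the three relevant quantities when $V=V^{n}$ plays no role: the ``continue'' value $G_{\mathrm{cont}}(X,Y)=P+\mathbf{E}[V_{t+1}(S_{t+1})\mid (X,Y),a=0]$ and the ``replace'' value $G_{\mathrm{repl}}(X,Y)=P-r(X,Y)+V_{t+1}(X_{\max},Y_{\max})$, both available when $X>0$, together with the forced-regeneration value $G_0(Y)=-F-r(0,Y)+V_{t+1}(X_{\max},Y_{\max})$ when $X=0$, so that $(HV)_t(X,Y)=\max\{G_{\mathrm{cont}},G_{\mathrm{repl}}\}$ for $X>0$ and $(HV)_t(0,Y)=G_0(Y)$.

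For the ``continue'' branch (both $X,X'>0$, which holds automatically since $X'\ge X>0$) I would establish the sample-path domination $S_{t+1}\mid(X,Y),a=0\ \preceq\ S_{t+1}\mid(X',Y'),a=0$ almost surely by coupling on the common information $W_{t+1}=(U_{t+1},\epsilon^X_{t+1},\epsilon^1_{t+1},\dots)$, which is legitimate because $W_{t+1}$ is independent of $S_t$. The only delicate point is the depreciation threshold $f(X_t,Y_t)$: since $f$ is nonincreasing, $f(X,Y)\ge f(X',Y')$, so the depreciation events are nested, and splitting on the three ranges of $U_{t+1}$ relative to these two thresholds shows $X_{t+1}(X,Y)\le X_{t+1}(X',Y')$ pathwise, while $[Y^i-\epsilon^i]^+\le[(Y')^i-\epsilon^i]^+$ gives the coordinatewise bound on $Y_{t+1}$. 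The induction hypothesis (monotone $V_{t+1}$) then yields $G_{\mathrm{cont}}(X,Y)\le G_{\mathrm{cont}}(X',Y')\le (HV)_t(s')$. The ``replace'' branch is immediate from $r(X,Y)\ge r(X',Y')$, giving $G_{\mathrm{repl}}(X,Y)\le G_{\mathrm{repl}}(X',Y')\le (HV)_t(s')$.

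The genuinely nonstandard case, and the one I expect to be the main obstacle, is $X=0$ with $X'>0$, where the transition is discontinuous and the coupling above breaks down (the lower state jumps up to $X_{\max}$). Here the trick is to compare the forced value $G_0(Y)$ at $s$ against the \emph{replace} option at $s'$: both regenerate to $(X_{\max},Y_{\max})$, so the value-function terms cancel, and monotonicity of $r$ (hence $r(X',Y')\le r(0,Y')\le r(0,Y)$) together with $P\ge 0\ge -F$ gives $G_0(Y)=-F-r(0,Y)+V_{t+1}(X_{\max},Y_{\max})\le P-r(X',Y')+V_{t+1}(X_{\max},Y_{\max})=G_{\mathrm{repl}}(X',Y')\le (HV)_t(s')$. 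The residual boundary subcase $X=X'=0$ reduces to $r(0,Y)\ge r(0,Y')$. Collecting all cases proves $(HV)_t(s)\le(HV)_t(s')$, which is Assumption \ref{ass:Emono}(ii), and the backward induction then delivers the claimed monotonicity of $V_t^*$.
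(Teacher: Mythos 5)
Your proposal is correct and follows essentially the same route as the paper's proof: the same case split (both $X,X'>0$; both zero; the boundary case $0=X<X'$), with the interior case handled by monotonicity of the transition (which the paper delegates to the inductive step of Proposition \ref{mono_cond_one} and you re-derive via an explicit coupling on $W_{t+1}$), and the boundary case resolved by the identical trick of matching the forced regeneration at $s=(0,y)$ against the replace action $a_t=1$ at $s'$, whose contribution $P-r(X',Y')$ dominates $-F-r(0,Y)$ since $r$ is nonincreasing.
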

\begin{proof}
See Appendix \ref{sec:appendix}.
\end{proof}


\subsubsection{Parameter Choices} In the numerical work of this paper, we consider five variations of the problem, where the dimension varies across $n=3$, $n=4$, $n=5$, $n=6$, and $n=7$, and the labels assigned to these are \texttt{R3}, \texttt{R4}, \texttt{R5}, \texttt{R6}, and \texttt{R7}, respectively. The following set of parameters are used across all of the problem instances. We use $X_\textnormal{max} = 10$ and $Y_\textnormal{max}^i=10$, for $i=1,2,\ldots,n-1$ over a finite time horizon $T=25$. The probability of the $i$--th external factor $Y_t^i$ decrementing, $p_i$, is set to $p_i=i/(2n)$, for $i=1,2,\ldots,n-1$. Moreover, the probability of the asset's value $X_t$ depreciating is given by:
\[
f^-(X_t,Y_t) = 1 - \frac{X_t^2 + \|Y_t \|_2^2}{X_\textnormal{max}^2 + \|Y_\textnormal{max} \|_2^2},
\]
and we use $\epsilon_\textnormal{max}=5$. The revenue is set to be $P=100$ and the penalty to be $F=1000$. Finally, we let the replacement cost be:
\[
r(X_t,Y_t) = 400 + \frac{2}{n}\, \bigl(X_\textnormal{max}^2- X_t^2 + \|Y_\textnormal{max} \|_2^2  - \|Y_t \|_2^2 \bigr),
\]
which ranges from 400 to 600. All of the policies that we compute assume an initial state of $S_0=(X_\textnormal{max},Y_\textnormal{max})$. For each of the five problem instances, Table \ref{table:sizes1} gives the cardinalities of the state space, effective state space (i.e., $(T+1)\, |\mathcal S|$), and action space, along with the computation time required to solve the problem exactly using backward dynamic programming.
In the case of \texttt{R7}, we have an effective state space of nearly half a billion, which requires over a week of computation time to solve exactly.

\begin{table}[h]
\centering
\scriptsize
\begin{tabular}{@{}crrrr@{}}\toprule

\multicolumn{1}{c}{\textbf{Label}} & \multicolumn{1}{c}{\textbf{State Space}} & \multicolumn{1}{c}{\textbf{Eff. State Space}} & \multicolumn{1}{c}{\textbf{Action Space}} & \multicolumn{1}{c}{\textbf{CPU (Sec.)}}\\
\midrule
\texttt{R3} & 1{,}331 & 33{,}275 & 2 & 49\\
\texttt{R4} & 14{,}641 & 366{,}025 & 2 & 325\\
\texttt{R5} & 161{,}051 & 4{,}026{,}275 & 2 & 3{,}957\\
\texttt{R6} & 1{,}771{,}561 & 44{,}289{,}025 & 2 & 49{,}360 \\
\texttt{R7} & 19{,}487{,}171 & 487{,}179{,}275 & 2 & 620{,}483\\
\bottomrule
\end{tabular}
\vspace{1em}
\caption{Basic Properties of Regenerative Optimal Stopping Problem Instances}
\label{table:sizes1}
\end{table}
\subsubsection{Results}
Figure \ref{fig:Riter} displays the empirical results of running Monotone--ADP and each of the aforementioned ADP/RL algorithms on the optimal stopping instances \texttt{R3}--\texttt{R7}. Due to the vast difference in size of the problems (e.g., \texttt{R7} is 14{,}000 times larger than \texttt{R3}), each problem was run for a different number of iterations. First, we point out that AVI and QL barely make any progress, even in the smallest instance \texttt{R3}. However, this observation only partially attests to the value of the simple $\Pi_M$ operator, for it is not entirely surprising as AVI and QL only update one state (or one state--action) per iteration. The fact that Monotone--ADP also outperforms both KBRL and API (in the area of 10\%--15\%) makes a stronger case for Monotone--ADP because in both KBRL and API, there is a notion of generalization to the entire state space. As we mentioned earlier, besides the larger optimality gap, the main concerns with KBRL and API are, respectively, bandwidth selection and policy oscillations.
\begin{figure}[!ht]
        \centering
        \begin{subfigure}[b]{0.32\textwidth}
                \centering
                \includegraphics[width=\textwidth]{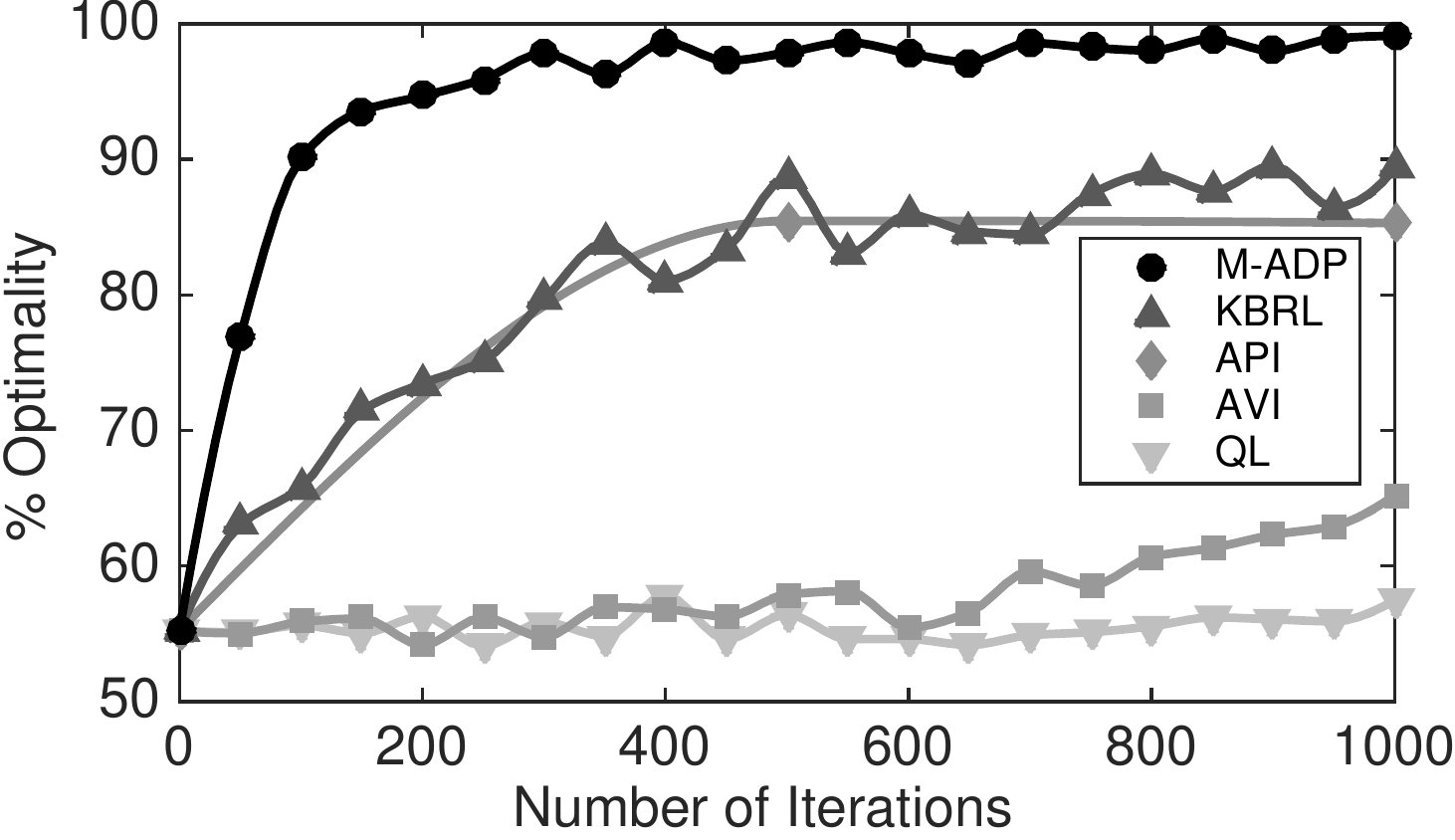}
                \caption{Instance \texttt{R3}}
        \end{subfigure}
        \begin{subfigure}[b]{0.32\textwidth}
                \centering
                \includegraphics[width=\textwidth]{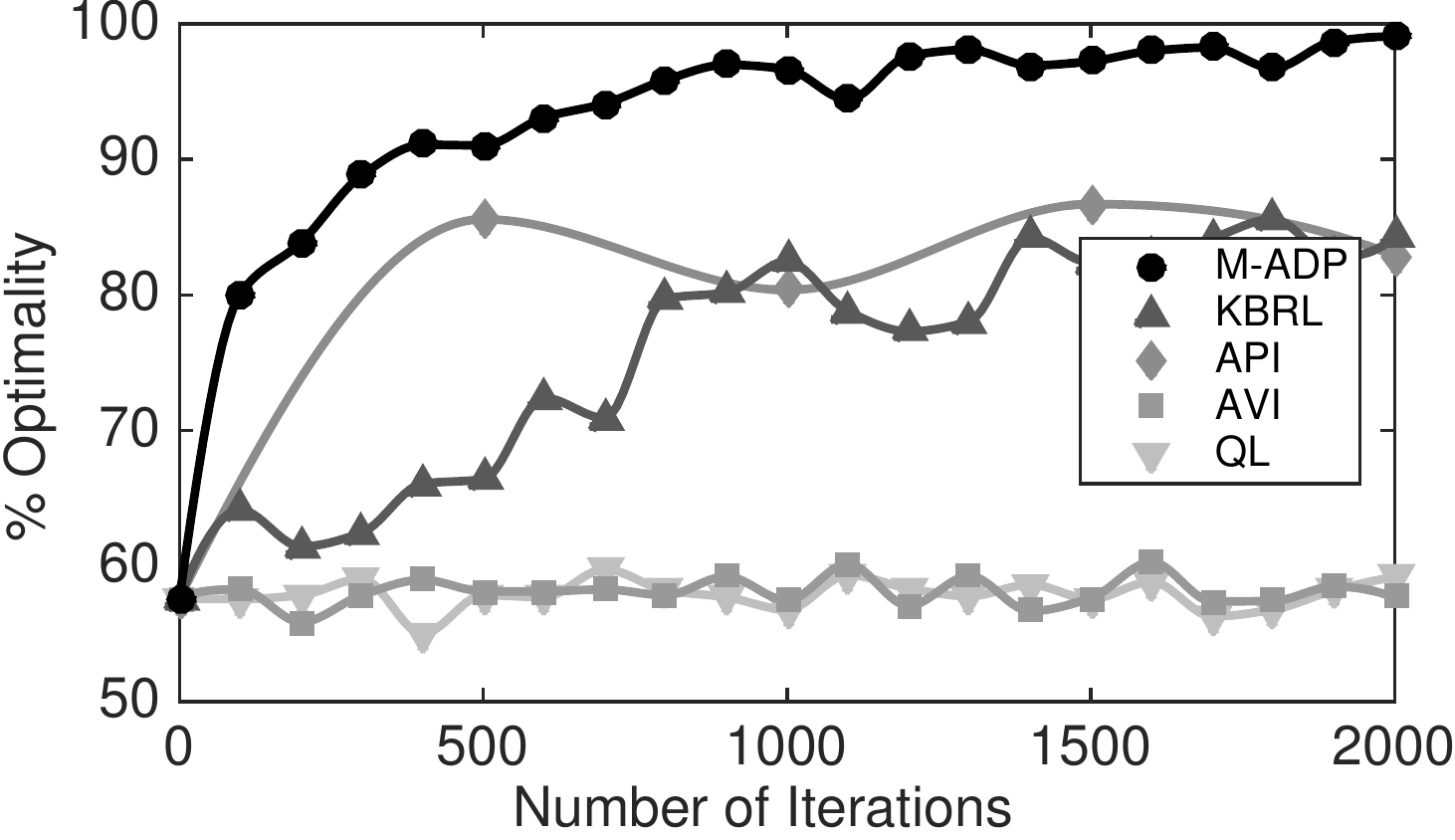}
                \caption{Instance \texttt{R4}}
        \end{subfigure}
        \begin{subfigure}[b]{0.32\textwidth}
                \centering
                \includegraphics[width=\textwidth]{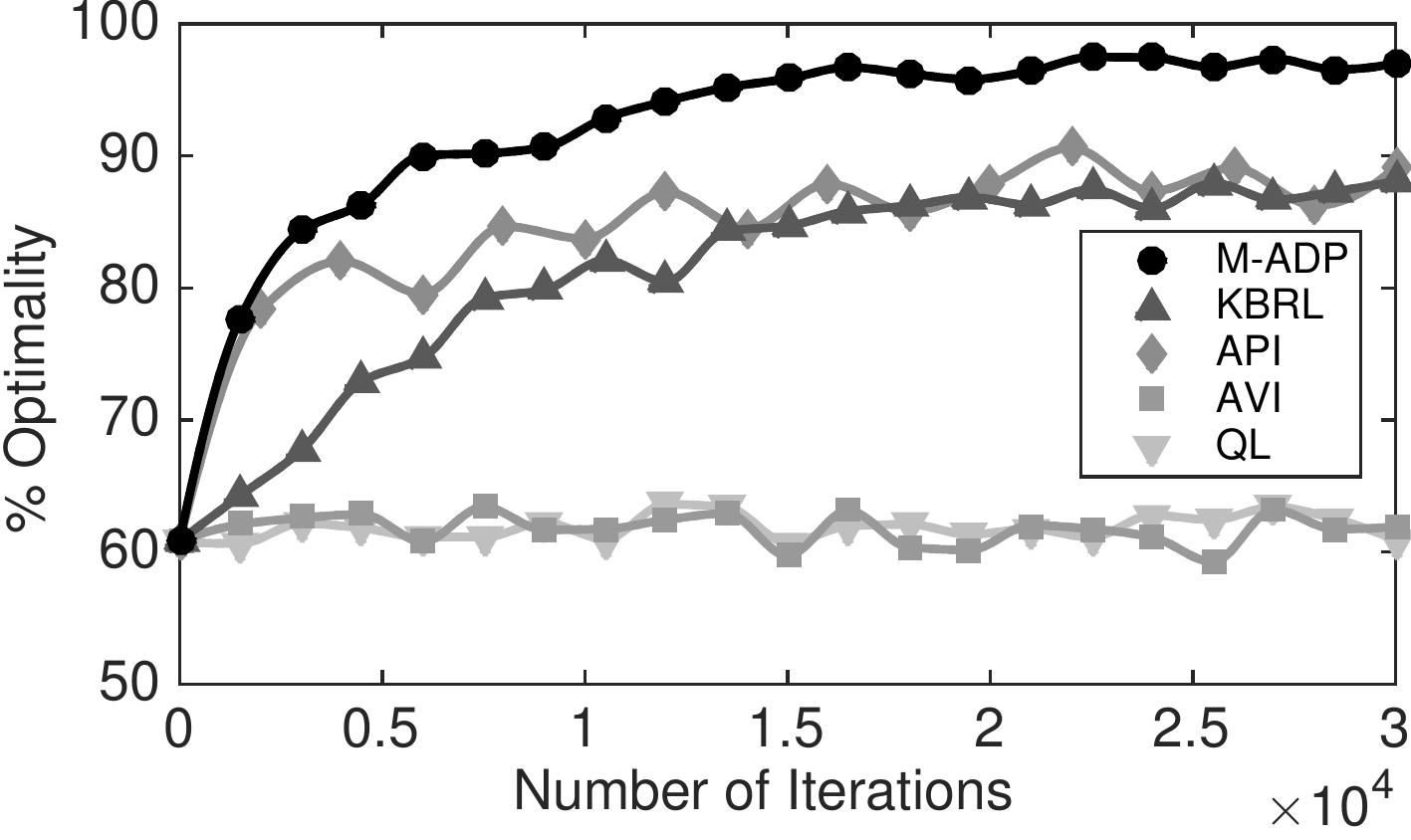}
        		\caption{Instance \texttt{R5}}
        \end{subfigure}\\
       
        \begin{subfigure}[b]{0.32\textwidth}
                \centering
                \includegraphics[width=\textwidth]{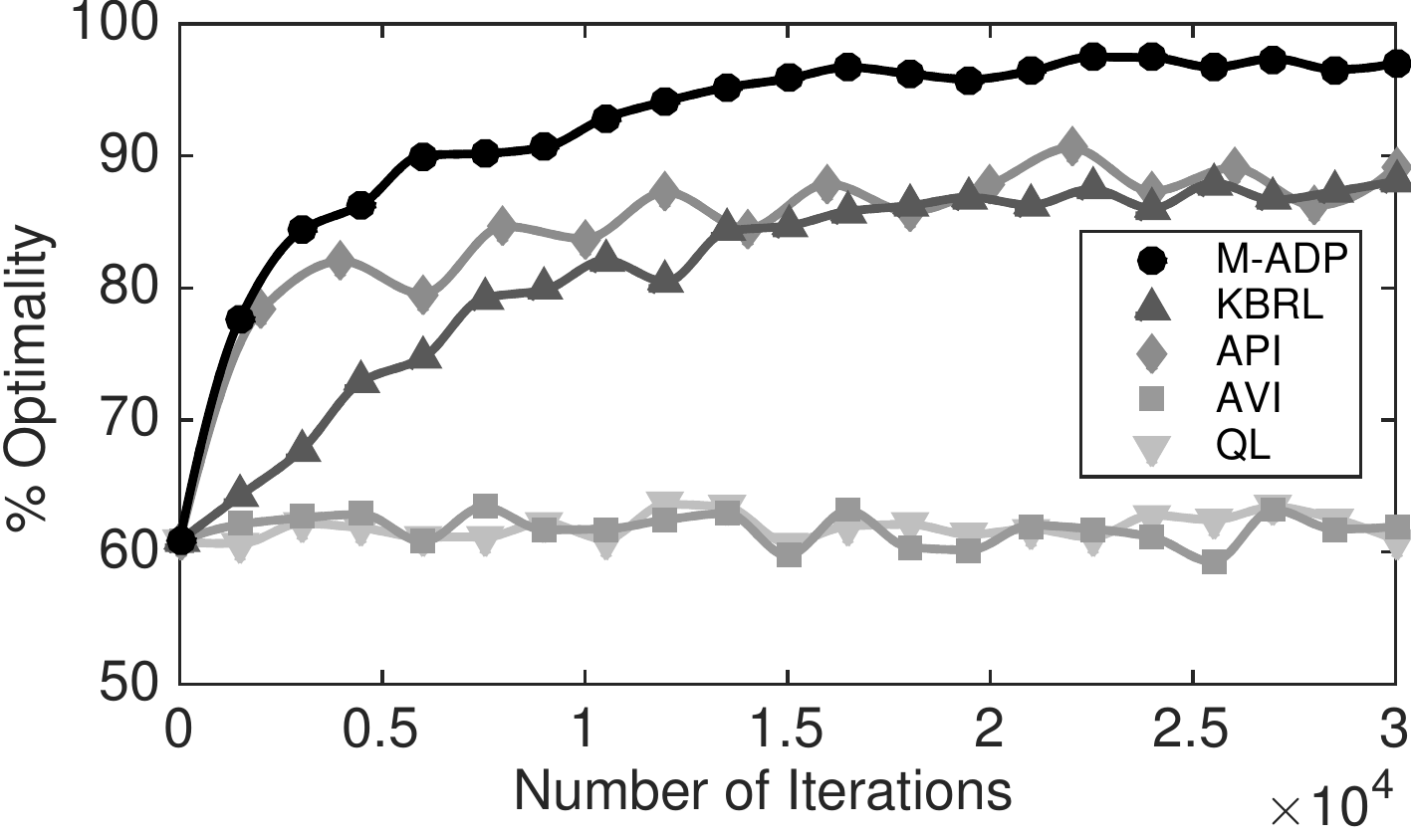}
                \caption{Instance \texttt{R6}}
        \end{subfigure}
        \begin{subfigure}[b]{0.32\textwidth}
                \centering
                \includegraphics[width=\textwidth]{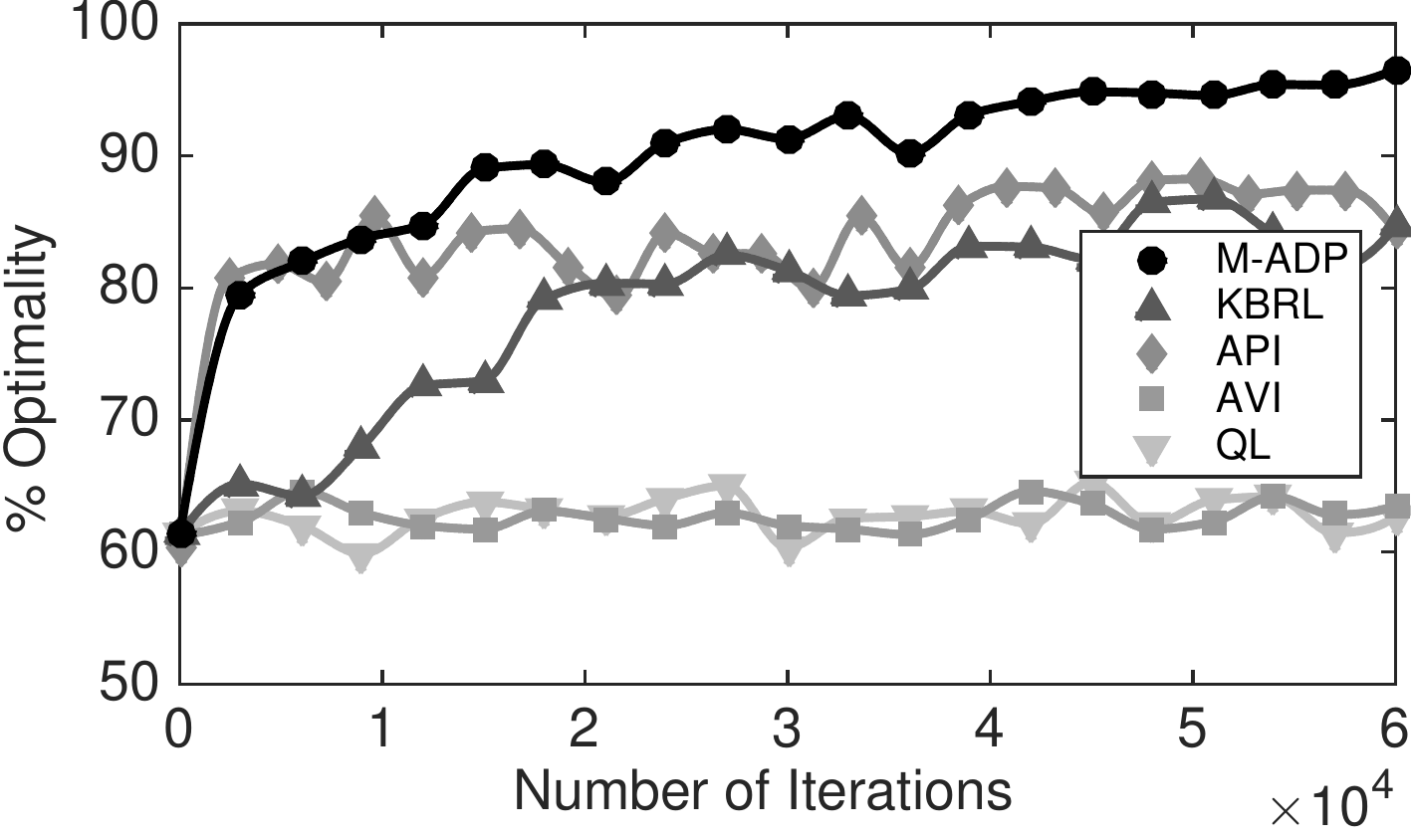}
                \caption{Instance \texttt{R7}}
        \end{subfigure}
        \caption{Comparison of Monotone--ADP to Other ADP/RL Algorithms}
        \label{fig:Riter}
\end{figure}

Question (2) concerns the computation requirement for Monotone--ADP. The optimality of the Monotone--ADP policies versus the computation times needed for each are shown on the semilog plots in Figure \ref{fig:Rcpu} below. In addition, the single point to the right represents the amount of computation needed to produce the exact optimal solution using BDP. The horizontal line represents 90\% optimality (near--optimal). The plots show that for every problem instance, we can reach near--optimality using Monotone--ADP with around an \emph{order of magnitude} less computation than if we used BDP to compute the true optimal. In terms of a percentage, Monotone--ADP required 5.3\%, 5.2\%, 4.5\%, 4.3\%, and 13.1\% of the optimal solution computation time to reach a near--optimal solution in each of the respective problem instances. From Table \ref{table:sizes1}, we see that for larger problems, the amount of computation needed for an exact optimal policy is unreasonable for any real--world application. Combined with the fact that it is extremely easy to find examples of far more complex problems (the relatively small $X_\textnormal{max}$ and $Y_\textnormal{max}^i$ makes this example still somewhat tractable), it should be clear that exact methods are not a realistic option, even given the attractive theory of finite time convergence. 

\begin{figure}[!ht]
        \centering
        \begin{subfigure}[b]{0.32\textwidth}
                \centering
                \includegraphics[width=\textwidth]{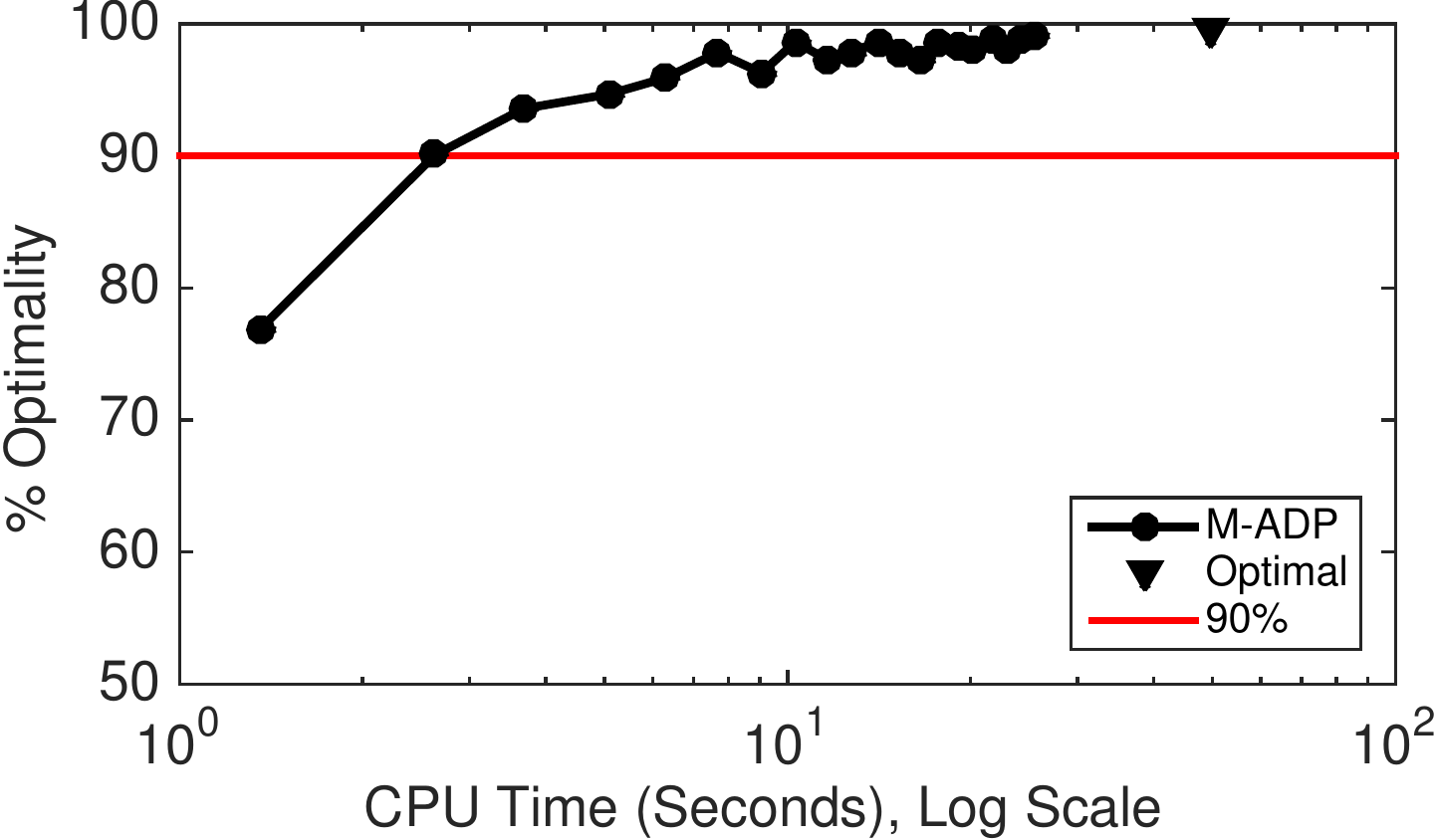}
                \caption{Instance \texttt{R3}}
        \end{subfigure}
        \begin{subfigure}[b]{0.32\textwidth}
                \centering
                \includegraphics[width=\textwidth]{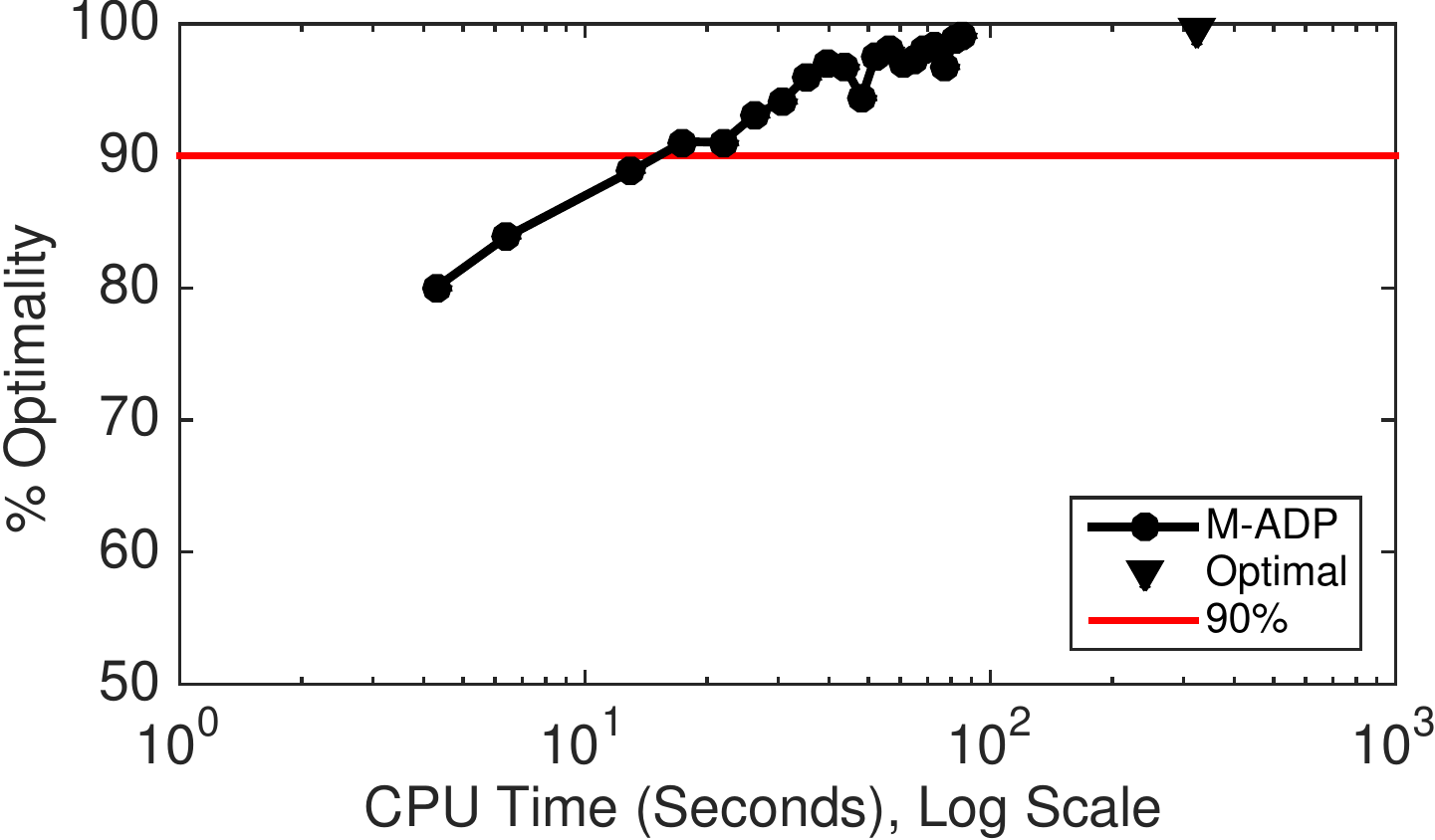}
                \caption{Instance \texttt{R4}}
        \end{subfigure}
        \begin{subfigure}[b]{0.32\textwidth}
                \centering
                \includegraphics[width=\textwidth]{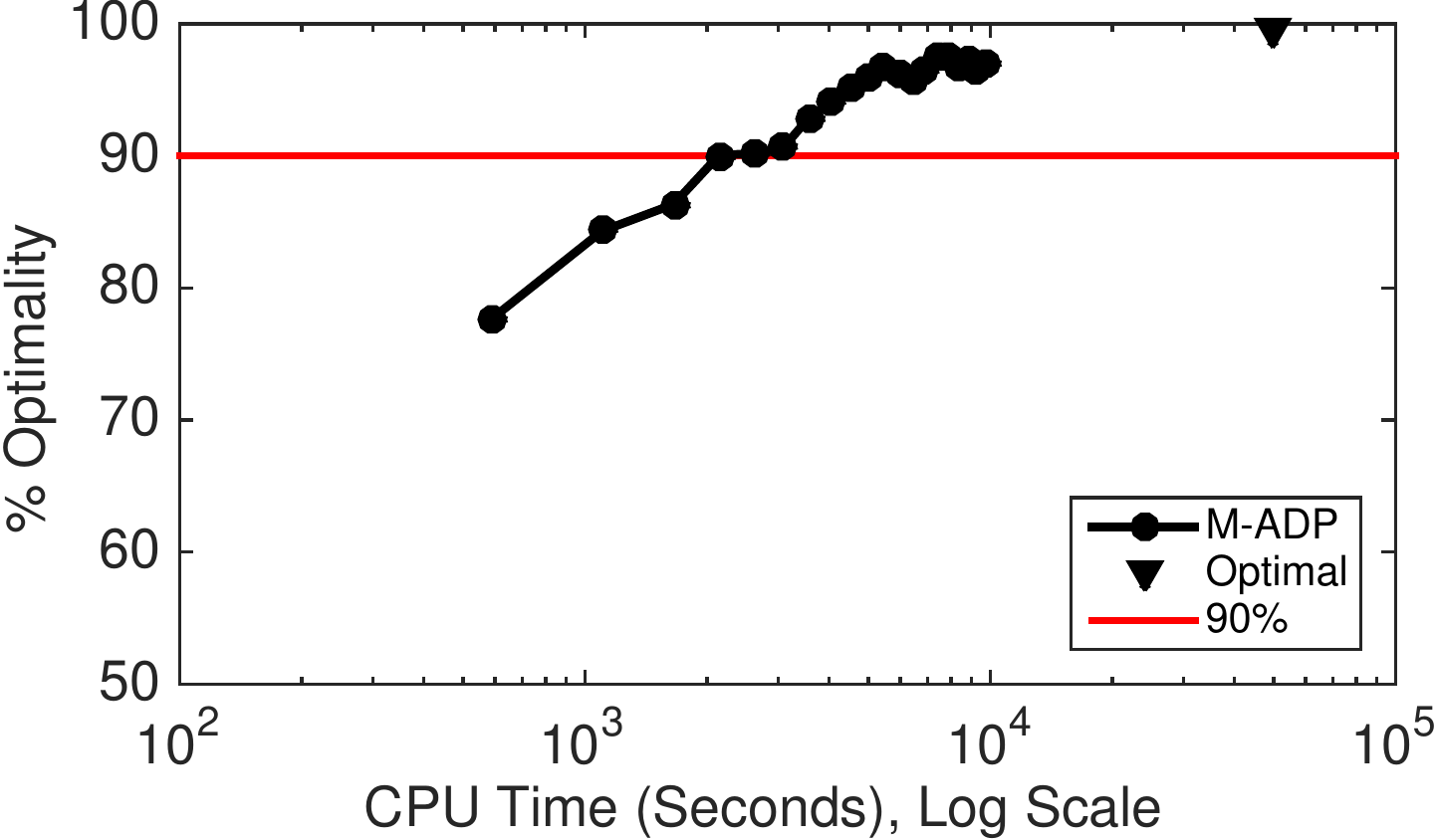}
        		\caption{Instance \texttt{R5}}
        \end{subfigure}\\
       
        \begin{subfigure}[b]{0.32\textwidth}
                \centering
                \includegraphics[width=\textwidth]{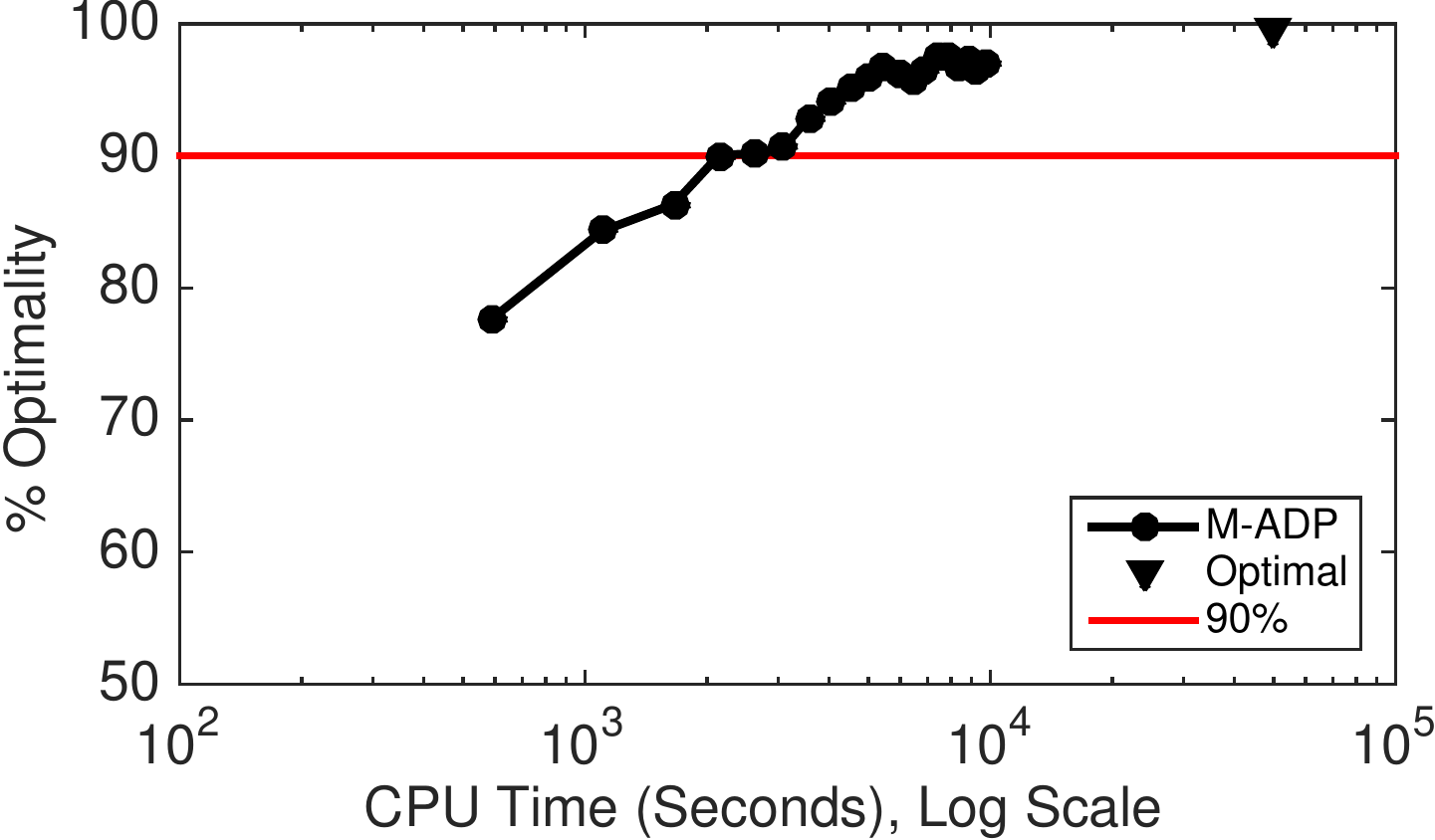}
                \caption{Instance \texttt{R6}}
        \end{subfigure}
        \begin{subfigure}[b]{0.32\textwidth}
                \centering
                \includegraphics[width=\textwidth]{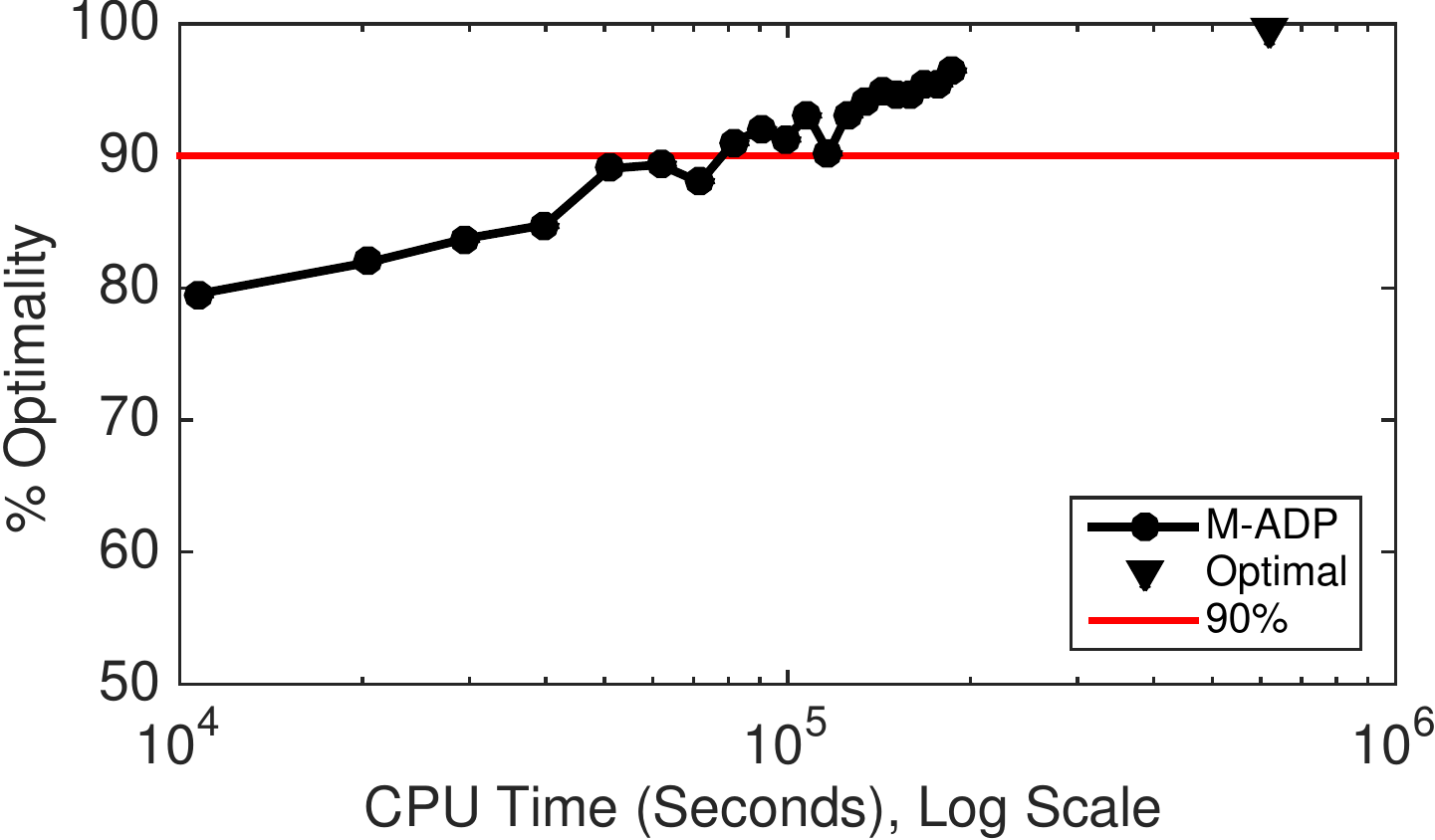}
                \caption{Instance \texttt{R7}}
        \end{subfigure}
        \caption{Computation Times (Seconds) of Monotone--ADP vs. Backward Dynamic Programming}
        \label{fig:Rcpu}
\end{figure}

\subsection{Energy Storage and Allocation}
The recent surge of interest in renewable energy leads us to present a second example application in area energy storage. The goal of this specific problem is to optimize revenues while satisfying demand, in the presence of 1) a storage device, such as a battery, and 2) a (stochastic) renewable source of energy, such as wind or solar. Our action or decision vector is an \emph{allocation decision}, containing five dimensions, that describe how the energy is transferred within our network, consisting of nodes for the storage device, the spot market, demand, and the source of renewable generation (see Figure \ref{fig:storage_diagram}). Similar problems from the literature that share the common theme of storage include \cite{Secomandi2010}, \cite{Carmona2010}, and \cite{Kim2011}, to name a few.
\begin{figure}[h]
	\begin{center}
	\includegraphics[scale=.45]{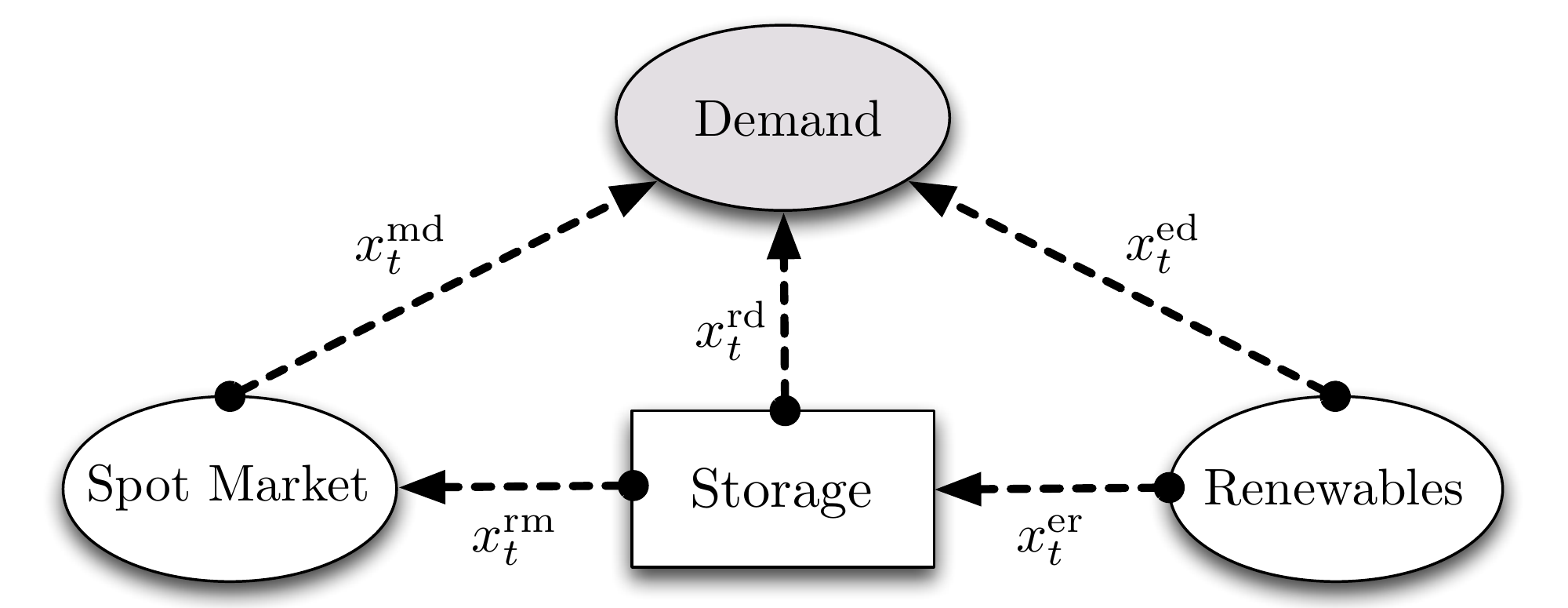}\\
	\end{center}
	\caption{Network Diagram for the Energy Storage Problem}
	\label{fig:storage_diagram}
\end{figure}

Let the state variable $S_t = (R_t, E_t, P_t, D_t) \in \mathcal S$, where $R_t$ is the amount of energy in storage at time $t$, $E_t$ is the amount of renewable generation available at time $t$, $P_t$ is the price of energy on the spot market at time $t$, and $D_t$ is the demand that needs to be satisfied at time $t$. We define the discretized state space $\mathcal S$ by allowing $(R_t, E_t, P_t, D_t)$ to take on all integral values contained within the hyper--rectangle 
\[
[0,R_\textnormal{max}] \times [E_\textnormal{min},E_\textnormal{max}] \times [P_\textnormal{min},P_\textnormal{max}] \times [D_\textnormal{min},D_\textnormal{max}],
\]
where $R_\textnormal{max} \ge 0$, $E_\textnormal{max} \ge E_\textnormal{min} \ge 0$, $P_\textnormal{max} \ge P_\textnormal{min} \ge 0$, and $D_\textnormal{max} \ge D_\textnormal{min} \ge 0$.
Let $\gamma_c$ and $\gamma_d$ be the maximum rates of charge and discharge from the storage device, respectively. The decision vector is given by (refer again to Figure \ref{fig:storage_diagram} for the meanings of the components)
\[
x_t = (\xt[ed],\xt[md],\xt[rd],\xt[er],\xt[rm])^\mathsf{T} \in \mathcal X(S_t),
\]
where the feasible set $\mathcal X(S_t)$ is defined by $x_t \in \mathbb N^5$ intersected with the following, mostly intuitive, constraints:
\begin{align}
(\xt[ed], \xt[md], \xt[rd],\xt[er],\xt[rm])^\mathsf{T} &\ge 0,\label{eq:constraint1}\\
\xt[ed]+ \xt[rd]+\xt[md] &= D_t,\label{eq:constraint2}\\
\xt[rd]+\xt[rm] &\le R_t,\nonumber\\
\xt[er]+\xt[ed] &\le E_t,\nonumber\\
\xt[rd]+\xt[rm] &\le \gamma^d,\nonumber\\
\xt[er] &\le \min \{ \Rmax - R_t, \gamma_c \}\label{eq:constraint6}.
\end{align}
Note that whenever the energy in storage combined with the amount of renewable generation is not enough to satisfy demand, the remainder is purchased from the spot market. The contribution function is given by 
\[
C_t(S_t,x_t) = P_t \, \bigl(D_t + \xt[rm] - \xt[md]\bigr).
\]
To describe the transition of the state variable, first define $\phi = (0,0,-1,1 ,-1)^\mathsf{T}$ to be the flow coefficients for a decision $x_t$ with respect to the storage device. We also assume that the dependence on the past for the stochastic processes $\{E_t\}_{t=0}^T$, $\{P_t\}_{t=0}^T$, and $\{D_t\}_{t=0}^T$ is at most Markov of order one. Let $[\,\cdot\,]^{a}_b = \min(\max(\,\cdot\,,b),a)$. Thus, we can write the transition function for $S_t$ using the following set of equations: 
\begin{equation}
R_{t+1} = R_t + \phi^\mathsf{T}x_t \quad \mbox{and} \quad 
\begin{aligned}
E_{t+1} &= \bigl[ E_t + \hat{E}_{t+1} \bigr]_{E_\textnormal{min}}^{E_\textnormal{max}}\,,\\
P_{t+1} &= \bigl[ P_t + \hat{P}_{t+1} \bigr]_{P_\textnormal{min}}^{P_\textnormal{max}}\,,\\
D_{t+1} &= \bigl[ D_t + \hat{D}_{t+1} \bigr]_{D_\textnormal{min}}^{D_\textnormal{max}}\,,
\end{aligned}
\label{eq:storage_trans}
\end{equation}
where the information process $W_{t+1}=(\hat{E}_{t+1},\hat{P}_{t+1},\hat{D}_{t+1})$ is independent of $S_t$ and $x_t$ (the precise processes used are given in Section \ref{sec:storage_params}). Note that the combined transition function is monotone in the sense of $(i)$ of Proposition \ref{mono_cond_one}, where $\preceq$ is the componentwise inequality.
\begin{restatable}{prop}{storagemono}
Under the energy storage model, define the Bellman operator $H$ as in (\ref{Hdef}), with $\mathcal A$ replaced with the state dependent $\mathcal X(s)$, and let $\preceq$ be the componentwise inequality over all dimensions of the state space. Then, Assumption \ref{ass:Emono} is satisfied. In particular, this implies that the optimal value function is monotone: for each $t \le T$, $V_t^*(R_t,E_t,P_t,D_t)$ is nondecreasing in $R_t$, $E_t$, $P_t$, and $D_t$.
\label{prop:storagemono}
\end{restatable}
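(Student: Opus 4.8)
The plan is to reduce the entire claim to checking the three hypotheses of Proposition \ref{mono_cond_one}, since that proposition already guarantees (via its conclusion together with Assumption \ref{ass:Emono}(i) being equivalent to its hypothesis (i)) that monotonicity propagates backward through $H$. Concretely, with $\preceq$ taken to be componentwise inequality on the full state $S_t = (R_t, E_t, P_t, D_t)$, I would identify the primary variable as the entire state vector (so there is no nontrivial $i$--component, or equivalently $\mathcal{I}$ is a singleton) and verify the three conditions. The terminal contribution $C_T \equiv 0$ is trivially monotone, dispatching Assumption \ref{ass:Emono}(i). The bulk of the work is then hypotheses (i) and (ii) of Proposition \ref{mono_cond_one}, after which the proposition's conclusion delivers Assumption \ref{ass:Emono}(ii), and hence the desired monotonicity of $V_t^*$.

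For the contribution monotonicity (hypothesis (ii) of the proposition), I would examine $C_t(S_t,x_t) = P_t(D_t + \xt[rm] - \xt[md])$. The subtlety here is that the feasible set $\mathcal{X}(S_t)$ depends on the state, so a naive componentwise comparison at \emph{fixed} action is not quite what is needed; one must be careful that the monotonicity statement in the proposition compares contributions at a common action $a$. The cleaner route is to observe that for fixed $x_t$ the contribution is nondecreasing in $P_t$ and $D_t$ (using $P_t \ge 0$) and independent of $R_t, E_t$, which handles the contribution condition directly. The transition condition (hypothesis (i)) is where I expect the main effort: I would verify that $S_{t+1} = f(S_t, x_t, W_{t+1})$ is order--preserving in $S_t$ for each fixed feasible action and fixed noise $W_{t+1} = (\hat{E}_{t+1}, \hat{P}_{t+1}, \hat{D}_{t+1})$. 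The $E,P,D$ components are clamped sums of the form $[\,\cdot + \hat{\cdot}\,]_{\min}^{\max}$, and since the clamp map $[\,\cdot\,]_b^a$ is nondecreasing, these three coordinates are immediately monotone in their own previous values. The $R_{t+1} = R_t + \phi^\mathsf{T} x_t$ component is monotone in $R_t$ at fixed $x_t$.

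The genuine obstacle is the coupling through the \emph{state--dependent feasible set}: to apply the transition hypothesis as stated, one compares $f(s,a,w)$ and $f(s',a,w)$ at a \emph{common} action $a$, but an action feasible at $s$ need not be feasible at $s' \succeq s$ or vice versa. I would resolve this by noting that the constraints \eqref{eq:constraint1}--\eqref{eq:constraint6} defining $\mathcal{X}(S_t)$ relax as the resource/generation coordinates increase: raising $R_t$ or $E_t$ only enlarges the upper bounds in the constraints, and the equality \eqref{eq:constraint2} ties the action to $D_t$, so the feasibility structure is compatible with the partial order. The safest way to present this is to appeal to the stronger almost--sure monotone--transition statement highlighted in the discussion following Assumption \ref{ass:Emono} --- namely that starting with a componentwise--larger state yields a componentwise--larger next state almost surely --- and verify that property directly coordinate by coordinate, rather than forcing the argument through the literal common--action phrasing. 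Since the paper's remark explicitly states this is the situation Proposition \ref{mono_cond_one} describes for resource allocation, I would complete the proof by confirming condition (iii), independence of $W_{t+1}$ from $S_t$ (stated directly in the model setup after \eqref{eq:storage_trans}), and then invoke the proposition. The full coordinate--by--coordinate feasibility bookkeeping I would defer to the appendix, as the excerpt does.
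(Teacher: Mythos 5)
Your proposal correctly identifies the central obstacle --- the state--dependent feasible set $\mathcal X(S_t)$ --- but the resolution you sketch does not work, and the step you defer ``to the appendix'' is in fact the entire content of the proof. First, the reduction to Proposition \ref{mono_cond_one} is not available here: that proposition is stated for a state--independent action set $\mathcal A$, and the paper does not invoke it for this model; instead it verifies Assumption \ref{ass:Emono}(ii) directly. Second, your key claim that ``raising $R_t$ or $E_t$ only enlarges the upper bounds in the constraints'' is false: constraint (\ref{eq:constraint6}), $x_t^{\textnormal{er}} \le \min\{\Rmax - R_t, \gamma_c\}$, \emph{tightens} as $R_t$ increases, and the equality constraint (\ref{eq:constraint2}) means that an action feasible at demand $d$ is outright infeasible at demand $d' > d$. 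So for $s \preceq s'$ the feasible sets do not nest, and a common--action comparison genuinely fails. Appealing to the almost--sure monotone--transition remark after Assumption \ref{ass:Emono} does not escape this, because that statement is also phrased at a \emph{common} action $a$; it suffers from exactly the same feasibility mismatch.

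The missing idea is an action--matching construction. The paper takes $x^*$ optimal at the smaller state $s=(r,e,p,d)$ and splits into two cases. If $x^* \in \mathcal X(s')$, the common--action comparison goes through as you describe. If $x^* \notin \mathcal X(s')$, the only constraints that can be violated are (\ref{eq:constraint2}) and (\ref{eq:constraint6}), and one exhibits a modified action $\bar x \in \mathcal X(s')$ obtained from $x^*$ by adding $\delta^d = d'-d$ to the market--to--demand flow and subtracting $\delta^r = r'-r$ from the renewable--to--storage flow. One then checks $C_t(s,x^*) \le C_t(s',\bar x)$ (using $p' \ge p$ and $d + x^{\textnormal{rm},*} - x^{\textnormal{md},*} \ge 0$) and $r + \phi^\mathsf{T} x^* \le r' + \phi^\mathsf{T} \bar x$, so that monotonicity of the transition (\ref{eq:storage_trans}), independence of $W_{t+1}$, and monotonicity of $V_{t+1}$ give the inequality on the expectation term; since $\bar x$ is feasible at $s'$, the maximum at $s'$ dominates. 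Without this explicit construction, your argument has no way to compare the two optimization problems in precisely the case that makes the proposition nontrivial.
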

\begin{proof}
See Appendix \ref{sec:appendix}.
\end{proof}
\subsubsection{Parameter Choices}
\label{sec:storage_params} In our experiments, a continuous distribution $D$ with density $f_D$ is discretized over a set $\mathcal X_D$ by assigning each outcome $x \in \mathcal X_D$ the probability $f_D(x) / \sum_{x' \in \mathcal X_D} f_D(x')$. We consider two instances of the energy storage problem for $T=25$: the first is labeled \texttt{S1} and has a smaller storage device and relatively low variance in the change in renewable supply $\hat{E}_{t+1}$, while the second, labeled \texttt{S2}, uses a larger storage device and has relatively higher variance in $\hat{E}_{t+1}$. We take $R_\textnormal{min} = 0$ with $R_\textnormal{max} = 30$ for \texttt{S1} and $R_\textnormal{max} = 50$ for \texttt{S2}, and we set $\gamma_c=\gamma_d=5$ for \texttt{S1} and \texttt{S2}. The stochastic renewable supply has characteristics given by $E_\textnormal{min} = 1$ and $E_\textnormal{max}=7$, with $\hat{E}_{t+1}$ being i.i.d. discrete uniform random variables over $\{0,\pm1\}$ for \texttt{S1} and $\hat{E}_{t+1}$ being i.i.d. $\mathcal N(0,3^2)$ discretized over the set $\{0, \pm 1, \pm 2, \ldots, \pm 5\}$. For both cases, we have $P_\textnormal{min} = 30$ and $P_\textnormal{max}=70$ with $\hat{P}_{t+1} = \epsilon_{t+1}^P + \mathbf{1}_{\{U_{t+1} < \,p\}} \, \epsilon_{t+1}^J$, in order to simulate price spikes (or jumps). The noise term $\epsilon_{t+1}^P$ is $\mathcal N(0,2.5^2)$ discretized over $\{0,\pm1,\pm2,\ldots,\pm8\}$; the noise term $\epsilon_{t+1}^J$ is $\mathcal N(0,50^2)$ discretized over $\{0,\pm1,\pm2,\ldots,\pm40\}$; and $U_{t+1}$ is $\mathcal U(0,1)$ with $p=0.031$. Lastly, for the demand process, we take $D_\textnormal{min} = 0$, $D_\textnormal{max}=7$, and $D_t+\hat{D}_{t+1} = \bigl\lfloor 3-4\,\sin(2\pi\,(t+1)/T) \bigr\rfloor + \epsilon^D_{t+1}$, where $\epsilon^D_{t+1}$ is $\mathcal N(0,2^2)$ discretized over $\{0,\pm1,\pm2\}$, in order to roughly model the seasonality that often exists in observed energy demand. For both problems, we use an initial state of an empty storage device and the other dimensions of the state variable set to their minimum values: $S_t = (R_\textnormal{min}, E_\textnormal{min}, P_\textnormal{min}, D_\textnormal{min})$. Table \ref{table:sizes2} summarizes the sizes of these two problems. Since the size of the action space is not constant over the state space, we report the average, i.e., $|\mathcal S|^{-1} \sum_{s} |\mathcal X(s)|$. The maximum size of the feasible set over the state space is also given (and is the same for both \texttt{S1} and \texttt{S2}). Finally, we remark that the greater than linear increase in computation time for \texttt{S2} compared to \texttt{S1} is due to the larger action space and the larger number of random outcomes that need to be evaluated.

\begin{table}[h]
\centering
\scriptsize
\begin{tabular}{@{}crrrr@{}}\toprule
\multicolumn{1}{c}{\textbf{Label}} & \multicolumn{1}{c}{\textbf{State Space}} & \multicolumn{1}{c}{\textbf{Eff. State Space}} & \multicolumn{1}{c}{\textbf{Action Space}} & \multicolumn{1}{c}{\textbf{CPU (Sec.)}}\\
\midrule
\texttt{S1} & 71{,}176 & 1{,}850{,}576 & 165, Max: 623 & 41{,}675\\
\texttt{S2} & 117{,}096 & 3{,}044{,}496 & 178, Max: 623 & 115{,}822\\
\bottomrule
\end{tabular}
\vspace{1em}
\caption{Basic Properties of Energy Storage Problem Instances}
\label{table:sizes2}
\end{table}

\subsubsection{Results}
For this problem, we did not implement QL because of the impracticality of working with state--action pairs in problem domains with vastly larger action space than optimal stopping. The state--dependent action space also introduces implementation difficulties. With regard to KBRL, API, and AVI, the results for energy storage tell a similar story as before, as seen in Figure \ref{fig:Siter}. 
\begin{figure}[!ht]
        \centering
        \begin{subfigure}[b]{0.35\textwidth}
                \centering
                \includegraphics[width=\textwidth]{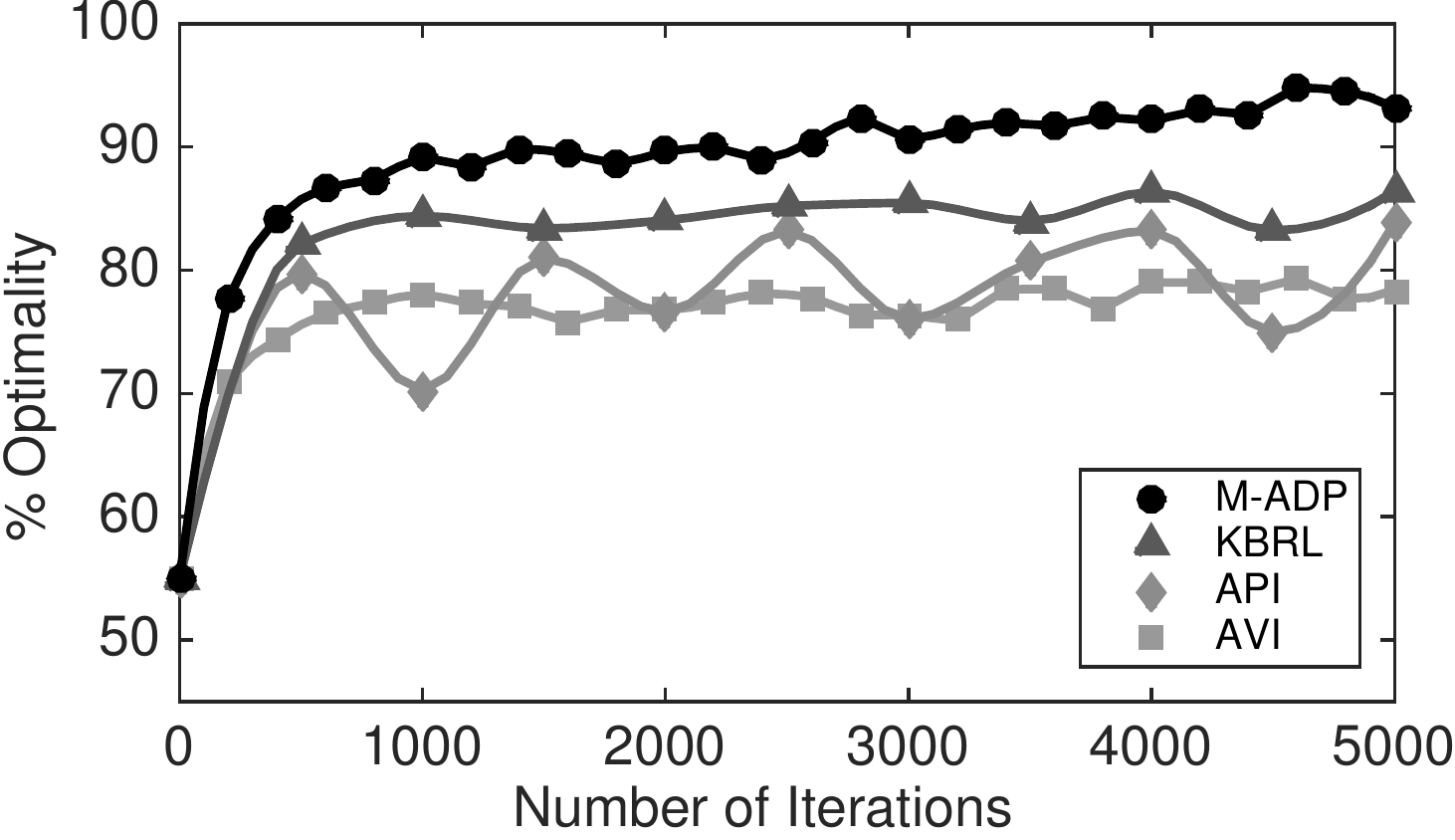}
                \caption{Instance \texttt{S1}}
        \end{subfigure}
        \begin{subfigure}[b]{0.35\textwidth}
                \centering
                \includegraphics[width=\textwidth]{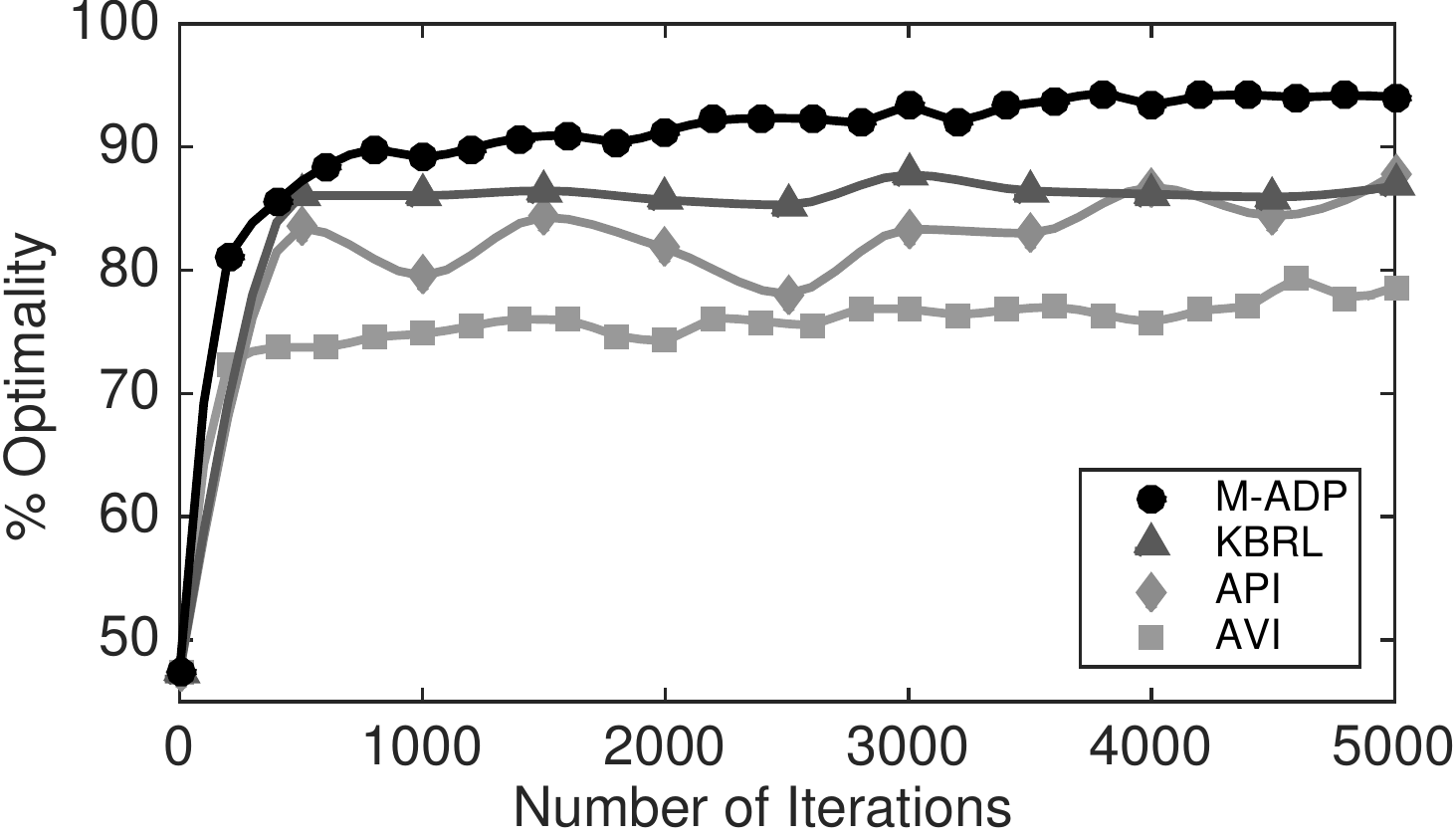}
                \caption{Instance \texttt{S2}}
        \end{subfigure}
        \caption{Comparison of Monotone--ADP to Other ADP/RL Algorithms}
        \label{fig:Siter}
\end{figure}
The computational gap between Monotone--ADP and BDP, however, has increased even further. As illustrated in Figure \ref{fig:Scpu}, the ratio of the amount of computation time for Monotone--ADP to reach near--optimality to the amount of computation needed for BDP stands at 1.9\% for \texttt{S1} and 0.7\% for \texttt{S2}, reaching two orders of magnitude.
\begin{figure}[!ht]
        \centering
        \begin{subfigure}[b]{0.35\textwidth}
                \centering
                \includegraphics[width=\textwidth]{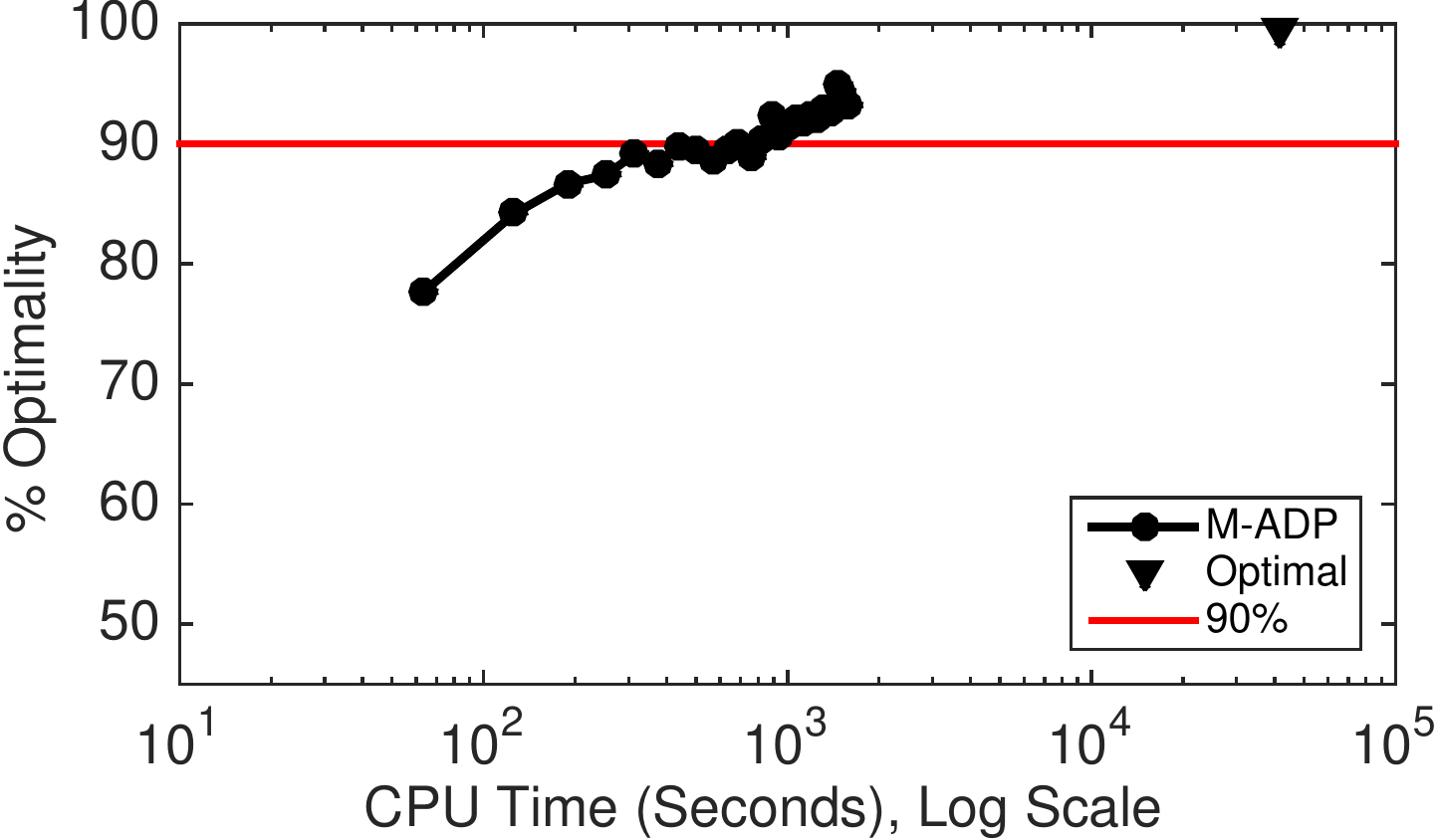}
                \caption{Instance \texttt{S1}}
        \end{subfigure}
        \begin{subfigure}[b]{0.35\textwidth}
                \centering
                \includegraphics[width=\textwidth]{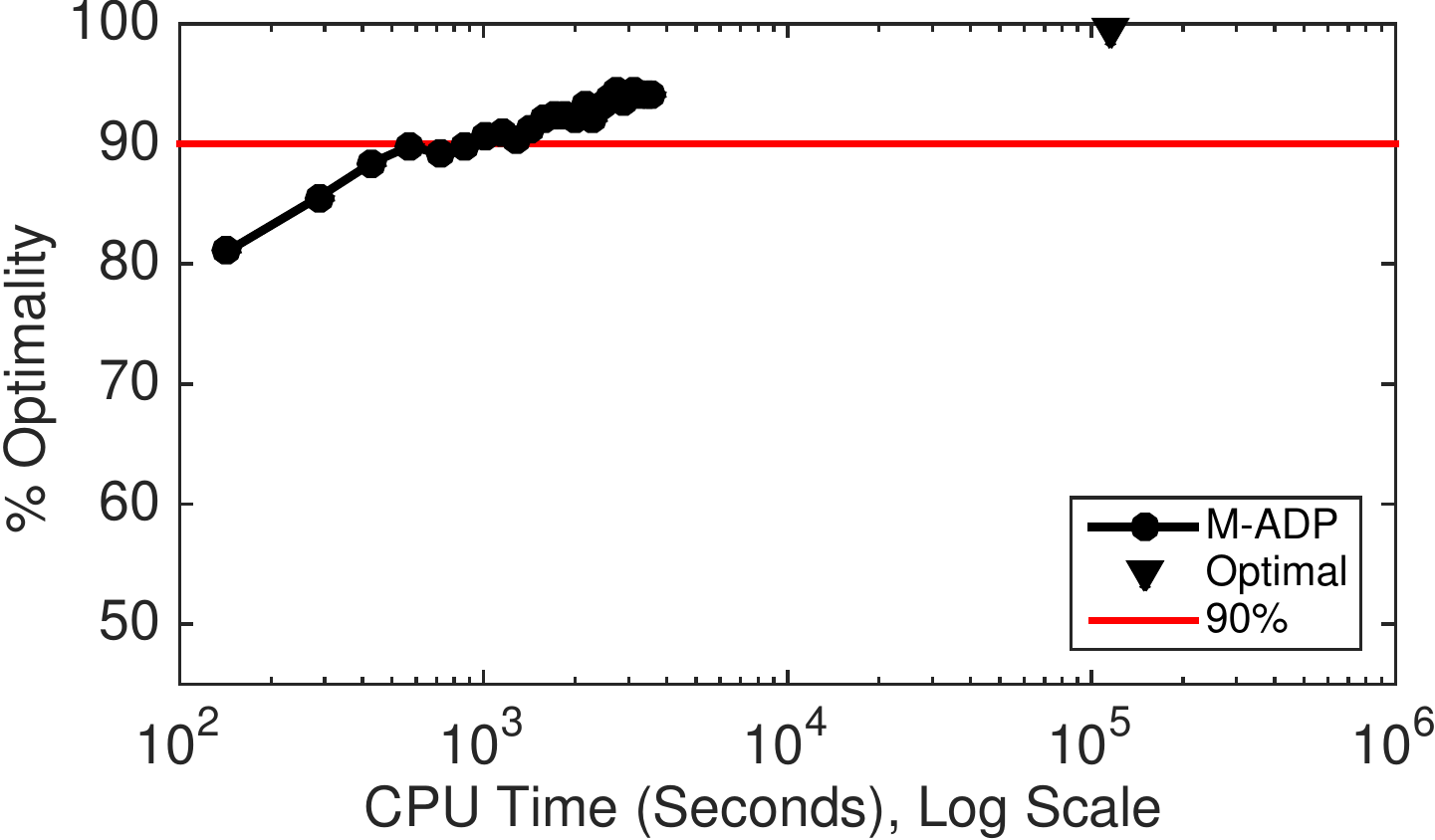}
                \caption{Instance \texttt{S2}}
        \end{subfigure}
        \caption{Computation Times (Seconds) of Monotone--ADP vs. Backward Dynamic Programming}
        \label{fig:Scpu}
\end{figure}

\subsection{Glycemic Control for Diabetes} Our final application is in the area of healthcare, concerning optimal treatment decisions for glycemic control (i.e., the management of blood glucose) in Type 2 diabetes patients over a time horizon $t \in \{0,1,\ldots, T\}$. The model is based primarily on the work \cite{Hsih2010} (with a few extensions) and also exhibits similarities to the ones described in \cite{Kurt2011}, \cite{Mason2012}, and \cite{Mason2014}. Because diabetes is a chronic disease, its patients require long--term, personalized treatment plans that take into account a variety of factors, such as measures of blood glucose, negative side effects, and medical costs. The idea of the model is to maximize a utility function (often chosen to be related to \emph{quality--adjusted life years} or QALYs in the literature) over time. The action at each time period is a treatment decision that has a stochastic effect on the patient's state of health, represented via the state variable $S_t = (H_t^a, H_t^b, H_t^c, H_t^d)$, where $H_t^a \in \mathcal H^a = \{H_\textnormal{min}^a, \ldots, H_\textnormal{max}^a \}$ and $H_t^b\in \mathcal H^b= \{H_\textnormal{min}^b, \ldots, H_\textnormal{max}^b\}$ are measures of blood glucose, $H_t^c \in \mathcal H^c= \{H_\textnormal{min}^c, \ldots, H_\textnormal{max}^c \}$ is a patient's BMI (body mass index), and $H_t^d \in \mathcal H^d = \{H_\textnormal{min}^d, \ldots, H_\textnormal{max}^d \}$ is the severity of side effects (e.g., gastrointestinal discomfort or hypoglycemia). More specifically, let $H_t^a$ be the FPG (fasting plasma glucose) level, a short term indicator of blood glucose and $H_t^b$ be the $\textnormal{HbA}_{\textnormal{1c}}$ (glycated hemoglobin) level, an indicator of average blood glucose over a longer term of a few months. The set of available treatment decisions is $\mathcal X = \{0,1,2,3,4\}$. Below is basic summary of the various treatments:
\begin{itemize}
\item \textbf{No treatment}, $a_t = 0$. It is often the case that the usual recommendations of diet and exercise can be sufficient when the patient's glucose levels are in the normal range; there is no cost and no risk of increasing the severity of side effects.
\item \textbf{Insulin sensitizers}, $a_t = 1$. These drugs increase the patient's sensitivity to insulin; the two most common types are called \emph{biguanides} and \emph{thiazolidinediones} (TZDs). 
\item \textbf{Secretagogues}, $a_t = 2$. Pancreatic $\beta$--cells are responsible for the release of insulin in the human body. Secretagogues act directly on $\beta$--cells to increase insulin secretion. 
\item \textbf{Alpha--glucosidase inhibitors}, $a_t = 3$. As the name suggests, this treatment disrupt the enzyme \emph{alpha--glucosidase}, which break down carbohydrates into simple sugars, and therefore decreases the rate at which glucose is absorbed into the bloodstream.
\item \textbf{Peptide analogs}, $a_t = 4$. By acting on certain \emph{incretins}, which are hormones that affect the insulin production of pancreatic $\beta$--cells, this type of medication is able to regulate blood glucose levels.
\end{itemize}
We make the assumption that the patient under consideration has levels of $H_t^a$, $H_t^b$, $H_t^c$, and $H_t^d$ \emph{higher} (i.e., worse) than the normal range (as is typical of a diabetes patient) and only model this regime. Therefore, assume that we have \emph{nonincreasing} utility functions $u_t^i : \mathcal H^i \rightarrow \mathbb R$ for $i \in \{a,b,c,d\}$ and that the cost of treatment is $P \ge 0$. The contribution function at time $t$ is
\[
C_t(S_t,a_t) = u_t^a(H_t^a) + u_t^b(H_t^b) + u_t^b(H_t^b) +u_t^d(H_t^d) - P \cdot \mathbf{1}_{\{a_t \ne 0\}}.
\]
Furthermore, the information process in this problem is the stochastic effect of the treatment on the patient. This effect is denoted $W_{t+1} = (\hat{H}_{t+1}^a,\hat{H}_{t+1}^b,\hat{H}_{t+1}^c,\hat{H}_{t+1}^d)$ with the transitions given by $\bigl[ H_t^i + \hat{H}^i_{t+1} \bigl]_{H^i_\textnormal{min}}^{H^i_\textnormal{max}}$ for each $i \in \mathcal \{a,b,c,d\}$. Of course, the distribution of $W_{t+1}$ depends on the treatment decision $x_t$, but we assume it is independent of the state variable $S_t$.

Notice that in this problem, the contribution function is nonincreasing with respect to the state variable, reversing the monotonicity in the value function as well.
\begin{restatable}{prop}{glycemicmono}
Under the glycemic control model, define the Bellman operator $H$ as in (\ref{Hdef}), with $\mathcal A$ replaced with the set of treatment decisions $\mathcal X$, and let $\preceq$ be the componentwise inequality over all dimensions of the state space. Then, Assumption \ref{ass:Emono} is satisfied, with the direction of $\preceq$ reversed. In particular, this implies that the optimal value function is monotone: for each $t \le T$, $V_t^*(H_t^a,H_t^b,H_t^c,H_t^d)$ is nonincreasing in $H_t^a$, $H_t^b$, $H_t^c$, and $H_t^d$.
\label{prop:glycemicmono}
\end{restatable}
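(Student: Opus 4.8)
The plan is to deduce the result directly from Proposition~\ref{mono_cond_one} by reversing the partial order. Let $\preceq$ denote the componentwise inequality on $\mathcal S$ and let $\preceq^{\mathrm{r}}$ be its reverse, defined by $s \preceq^{\mathrm{r}} s' \Leftrightarrow s' \preceq s$ (i.e.\ componentwise $\ge$). Saying that $V_t^*$ is nonincreasing with respect to $\preceq$ is precisely the statement that $V_t^*$ is order--preserving with respect to $\preceq^{\mathrm{r}}$, so it suffices to verify the three hypotheses of Proposition~\ref{mono_cond_one} for the order $\preceq^{\mathrm{r}}$, taking the index set $\mathcal I$ to be a singleton (so that $M_t = S_t$ is the entire state and there is no state-of-the-world component). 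Establishing those hypotheses yields Assumption~\ref{ass:Emono} with the direction of $\preceq$ reversed, and hence the monotonicity of $V^*$.

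Hypothesis (i) requires that the transition function preserve $\preceq^{\mathrm{r}}$, i.e.\ that $s \succeq s'$ (componentwise) imply $f(s,x,w) \succeq f(s',x,w)$ for every treatment $x$ and every outcome $w$; equivalently, $f$ is coordinatewise nondecreasing in the usual order. Since each coordinate evolves independently through $H_{t+1}^i = \bigl[H_t^i + \hat H_{t+1}^i\bigr]_{H^i_{\mathrm{min}}}^{H^i_{\mathrm{max}}}$, and the map $h \mapsto [\,h + c\,]_{b}^{a} = \min\bigl(\max(h+c,b),a\bigr)$ is nondecreasing in $h$ for any fixed shift $c$ and bounds $b \le a$, the whole transition is coordinatewise nondecreasing. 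Thus $H_t^i \ge \tilde H_t^i$ for all $i$ forces $H_{t+1}^i \ge \tilde H_{t+1}^i$ for all $i$ on every sample path, giving (i).

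For hypothesis (ii), I observe that the treatment cost $-P\,\mathbf{1}_{\{x \ne 0\}}$ does not depend on the state, while each utility $u_t^i$ is nonincreasing by assumption; hence $C_t(\cdot,x) = \sum_i u_t^i(H_t^i) - P\,\mathbf{1}_{\{x \ne 0\}}$ is nonincreasing in each coordinate of the state, i.e.\ nondecreasing with respect to $\preceq^{\mathrm{r}}$, and the terminal contribution (taken to be identically zero here, or any nonincreasing terminal utility) is likewise $\preceq^{\mathrm{r}}$--nondecreasing. Hypothesis (iii) is immediate because $W_{t+1}$ is independent of the state $S_t$ by the model specification (the distribution of $W_{t+1}$ depends only on the chosen treatment), so in particular $M_t$ and $W_{t+1}$ are independent. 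With (i)--(iii) verified for $\preceq^{\mathrm{r}}$, Proposition~\ref{mono_cond_one} applies and the claim follows.

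The argument is essentially a bookkeeping exercise once the order is reversed; the only point demanding care is the first hypothesis, where one must confirm that the saturation (clipping) operation $[\,\cdot\,]_{b}^{a}$ is monotone, so that starting from a pointwise--larger health state leads, for every realization of $W_{t+1}$, to a pointwise--larger next state. This is exactly the ``starting with more $\Rightarrow$ ending with more'' condition of Proposition~\ref{mono_cond_one}, here applied in the reversed sense.
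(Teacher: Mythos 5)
Your proposal is correct and follows exactly the paper's route: the paper's own proof is the one-line remark that Proposition~\ref{mono_cond_one} applies with the direction of the inequalities reversed, which is precisely your reduction to the reversed order $\preceq^{\mathrm{r}}$. Your verification of the three hypotheses (monotonicity of the clipping map for (i), state-independence of the treatment cost and nonincreasing utilities for (ii), and action-conditional independence of $W_{t+1}$ from $S_t$ for (iii)) simply makes explicit what the paper leaves to the reader.
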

\begin{proof}
Proposition \ref{mono_cond_one} can be applied, with the direction of the inequalities reversed.
\end{proof}

\subsubsection{Parameter Choices} We remark that the parameters used in our experimental work are not realistic or estimated from data, but chosen so that the resulting MDP is interesting and not too easy to solve (we found that the original parameters from \cite{Hsih2010} created a problem that Monotone--ADP could solve to near optimality in as little as 50 iterations, making for a weak comparison). We consider two variations of the glycemic control problem, labeled \texttt{G1} and \texttt{G2}, where the only difference is that for \texttt{G1}, we assume there is no cost of treatment, i.e., $P=0$, and for \texttt{G2}, we set $P=2$. This comparison is made to illustrate that a trivial, seemingly inconsequential change to the problem can create dramatic difficulties for certain ADP algorithms, as we see in the next section.

The finite time horizon for glycemic control is chosen as $T=12$ (time is typically measured in units of a few months for a problem such as this). The lower and upper bounds of the state variable are given by
\begin{equation*}
\begin{aligned}
\bigl(H^a_\textnormal{min}, H^b_\textnormal{min}, H^c_\textnormal{min}, H^d_\textnormal{min}\bigr) &=(68,4,19,0),\\
\bigl(H^a_\textnormal{max}, H^b_\textnormal{max}, H^c_\textnormal{max}, H^d_\textnormal{max}\bigr) &=(300,20,50,10).
\end{aligned}
\end{equation*} 
For each health indicator $i \in \{a,b,c\}$, the utility function is taken to have the form $u_t^i(h) = k^i \, \log(H_\textnormal{max}^i-h)$ for all $t$. The values of the constant are $k^a = 4.586$, $k^b=7.059$, and $k^c = 5.771$. Additionally, let the utility function for side effects be given by $u_t^d(h) = -10h$.

Next, we assume that the distribution of $(\hat{H}_{t+1}^a,\hat{H}_{t+1}^b,\hat{H}_{t+1}^c)^\mathsf{T}$ conditional on $x_t = x$ is multivariate normal for all $t$, with mean vector $\mu^x$ and covariance matrix $\Sigma^x$:
\[
\mu^x = \left[ \begin{array}{c}
\mu^{x,a} \\
\mu^{x,b} \\
\mu^{x,c} \end{array} \right] \quad \mbox{and} \quad
\Sigma^x = \left[ \begin{array}{ccc}
\sigma^{x,aa} & \sigma^{x,ab} & \sigma^{x,ac} \\
\sigma^{x,ab} & \sigma^{x,bb} & \sigma^{x,bc} \\
\sigma^{x,ac} & \sigma^{x,bc} & \sigma^{x,cc} \end{array} \right].
\]
Discretization of these continuous distributions is performed in the same way as described in Section \ref{sec:storage_params}. The set of values onto which we discretize is a hyperrectangle, where dimension $i$ takes values between $\mu^{x,i} \pm 3 \, \sqrt{\sigma^{x,ii}}$, for $i \in \{a,b,c\}$. The distribution of $\hat{H}_{t+1}^d$ conditional on $x_t=x$ (change in side effect severity for treatment $x$) is a discrete distribution that takes values $\hat{h}^{x,d}$ and with probabilities $p^{x,d}$ (both vectors), for all $t$. The numerical values of these parameters are given in Table \ref{table:glycemic_params}.
\begin{table}[h]
\centering
\scriptsize
\begin{tabular}{@{}crrrrrrrrrrr@{}}\toprule
\textbf{Treatment} & \multicolumn{1}{c}{$\mu^{x,a}$} & \multicolumn{1}{c}{$\mu^{x,b}$} & \multicolumn{1}{c}{$\mu^{x,c}$} & \multicolumn{1}{c}{$\sigma^{x,aa}$} & \multicolumn{1}{c}{$\sigma^{x,bb}$} & \multicolumn{1}{c}{$\sigma^{x,cc}$} & \multicolumn{1}{c}{$\sigma^{x,ab}$} & \multicolumn{1}{c}{$\sigma^{x,ac}$} & \multicolumn{1}{c}{$\sigma^{x,bc}$} & \multicolumn{1}{c}{$\hat{h}^{x,d}$} & \multicolumn{1}{c}{$p^{x,d}$}\\
\midrule
$x_t = 0$ & 30 & 3 & 2 & 25 & 8 & 8 & 0.8 & 0.5 & 0.2 & $[-1,0]$ & $[0.8,0.2]$\\
$x_t = 1$ & $-25$ & $-1$ & 3 & 100 & 16 & 25 & 1.2 & 0.5 & 0.2 & $[0,1,2]$ & $[0.8,0.1,0.1]$\\
$x_t = 2$ & $-45$ & $-3$ & 5 & 100 & 16 & 25 & 1.2 & 0.5 & 0.1 & $[0,1,2]$ & $[0.75,0.15,0.1]$\\
$x_t = 3$ & $-10$ & $-1$ & $-1$ & 81 & 10 & 16 & 0.6 & 0.5 & 0.5 & $[0,1,2]$ & $[0.8,0.1,0.1]$\\
$x_t = 4$ & $-10$ & $-1$ & $-4$ & 81 & 10 & 16 & 1.2 & 0.5 & 0.5 & $[0,1,2]$ & $[0.7,0.2,0.1]$\\
\bottomrule
\end{tabular}
\vspace{1em}
\caption{Parameter Values for Glycemic Control Problem}
\label{table:glycemic_params}
\end{table}
Lastly, as we did for the previous two problems, Table \ref{table:sizes3} shows state space and computation time information for the glycemic control problem. The initial state is set to $S_0 = (H_\textnormal{max}^a,H_\textnormal{max}^b,H_\textnormal{max}^c,H_\textnormal{min}^d)$, to represent an unhealthy diabetes patient who has not undergone any treatment (and therefore, no side effects).
\begin{table}[h]
\centering
\scriptsize
\begin{tabular}{@{}crrrr@{}}\toprule
\multicolumn{1}{c}{\textbf{Label}} & \multicolumn{1}{c}{\textbf{State Space}} & \multicolumn{1}{c}{\textbf{Eff. State Space}} & \multicolumn{1}{c}{\textbf{Action Space}} & \multicolumn{1}{c}{\textbf{CPU (Sec.)}}\\
\midrule
\texttt{G1}/\texttt{G2} & 1{,}312{,}256 & 17{,}059{,}328 & 5 & 201{,}925\\
\bottomrule
\end{tabular}
\vspace{1em}
\caption{Basic Properties of Glycemic Control Problem Instances}
\label{table:sizes3}
\end{table}

\subsubsection{Results}
In the numerical work for glycemic control, we show a slightly different algorithmic phenomenon. Recall that in \texttt{G1}, there no cost of treatment, and consequently, the contribution function is independent of the treatment decision. It turns out that after relatively few iterations, \emph{all} of the ADP algorithms are able to learn that there is \emph{some value to be gained} by applying treatment. Figure \ref{subfig:GiterA} shows that they end up achieving policies between 75\% and 90\% of optimality, with Monotone--ADP outperforming KBRL by roughly 10\% and AVI (the worst performing) by only 15\%. What if we add in a seemingly minor treatment cost of $P=2$? Figure \ref{subfig:GiterB}, on the other hand, shows a dramatic failure of AVI: it never improves beyond 10\% of optimality. API shows similar behavior, and KBRL performed slightly worse (approx. 5\%) than it did on \texttt{G1} and reached a plateau in the middle iterations. The reason for AVI's poor performance is that it updates values too slowly from the initialization of $\widebar{V}^0$ (usually constant over $\mathcal S$) --- in other words, the future value term of Bellman's equation, $\mathbf{E} \bigl[ \widebar{V}^n_{t+1}(S_{t+1})\,|\,S_t = s, \, a_t=a \bigr]$, is unable to compensate for the treatment cost of $P=2$ quickly enough. We therefore conclude that it can be crucially important to update large swathes of states at a time, while observing the structural behavior. Even though KBRL and API do generalize to the entire state space, our observations from comparing \texttt{G1} and \texttt{G2} point to the additional value gained from using structural information.
\begin{figure}[!ht]
        \centering
        \begin{subfigure}[b]{0.35\textwidth}
                \centering
                \includegraphics[width=\textwidth]{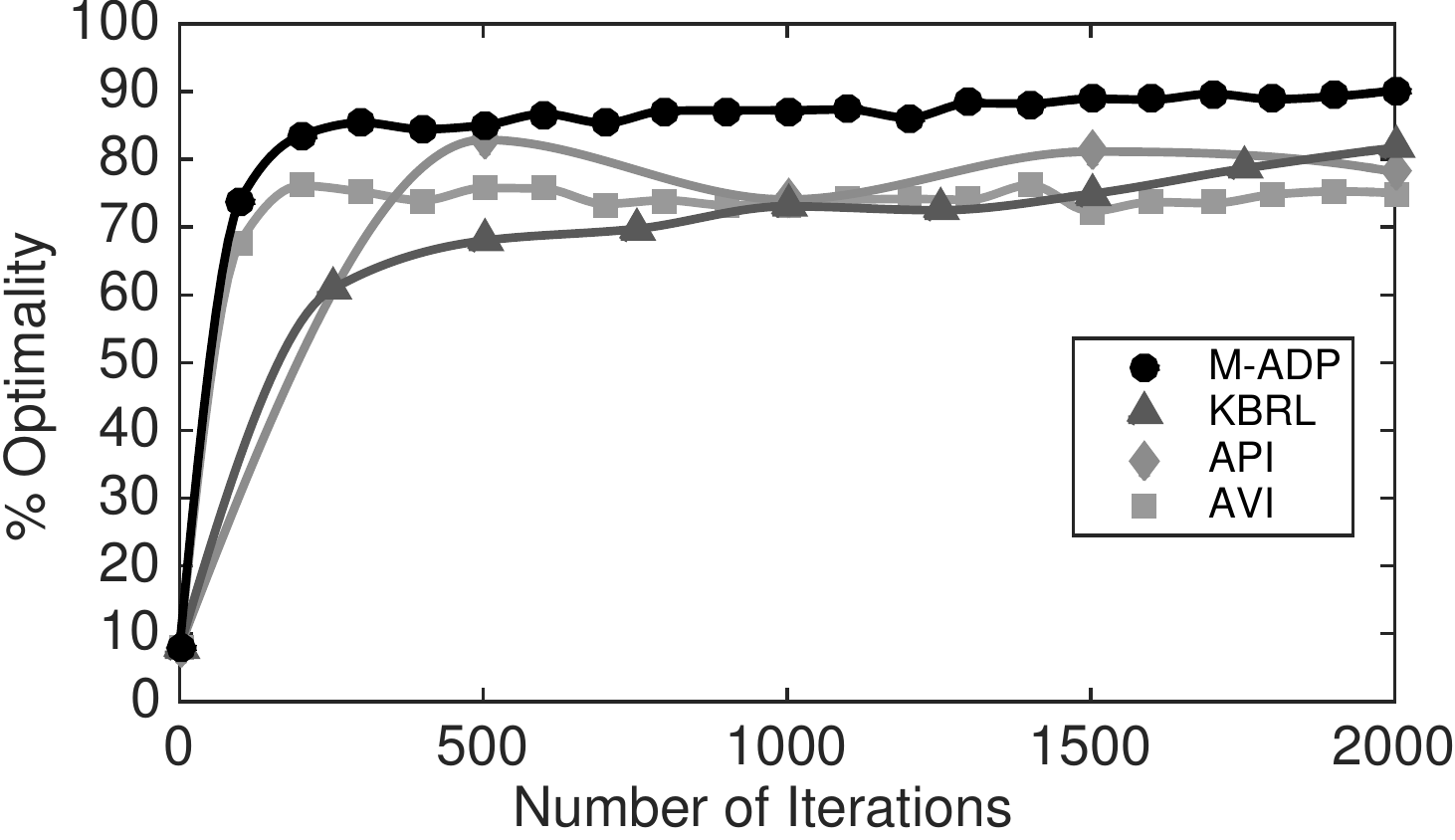}
                \caption{Instance \texttt{G1}}
                \label{subfig:GiterA}
        \end{subfigure}
        \begin{subfigure}[b]{0.35\textwidth}
                \centering
                \includegraphics[width=\textwidth]{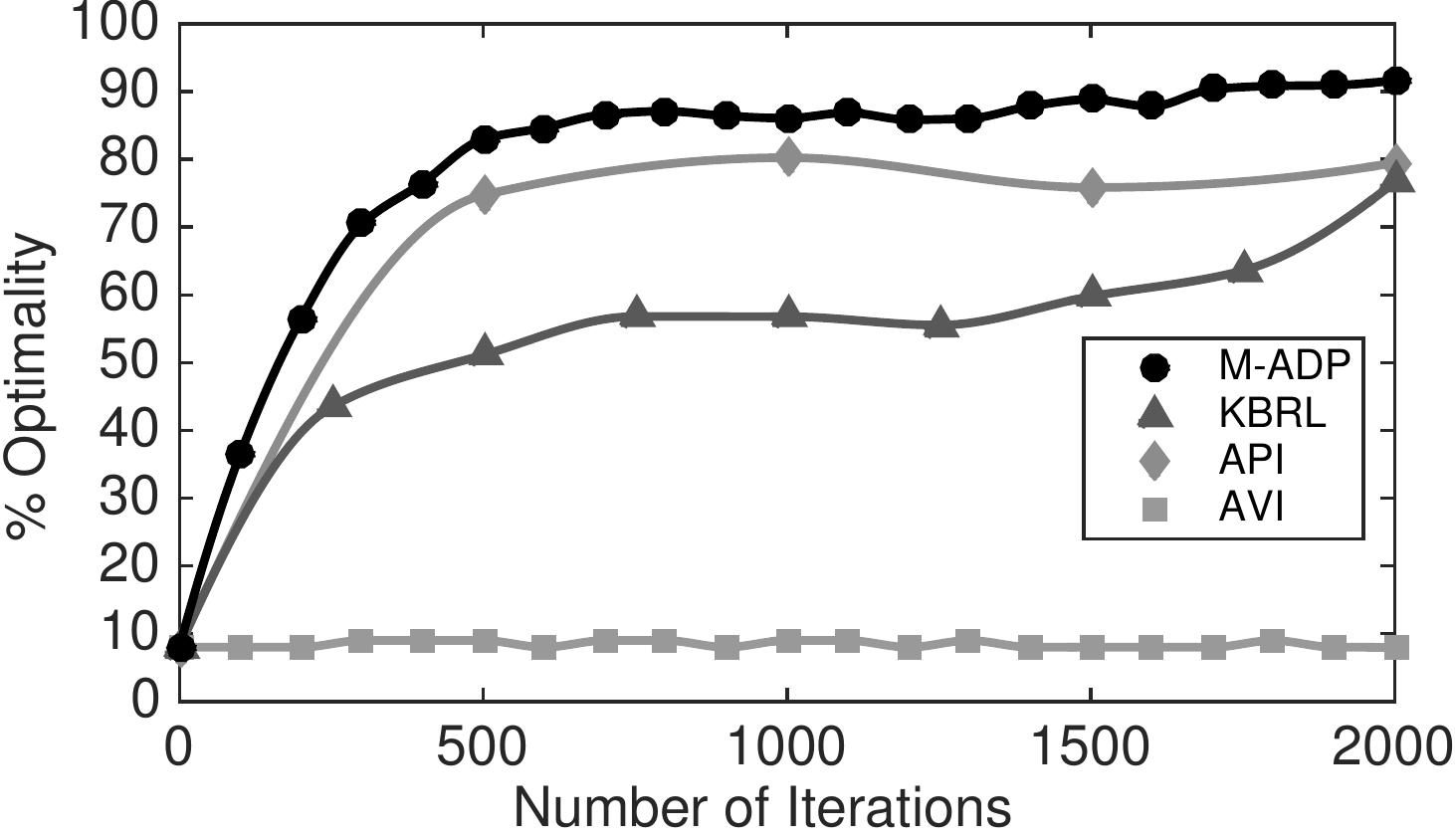}
                \caption{Instance \texttt{G2}}
                \label{subfig:GiterB}
        \end{subfigure}
        \caption{Comparison of Monotone--ADP to Other ADP/RL Algorithms}
\end{figure}
The computation time results of Figure \ref{fig:Gcpu} are similar to that of the previous examples. Monotone--ADP once again produces near--optimal solutions with significantly less computation: a ratio of 1.5\% for \texttt{G1} and 1.2\% for \texttt{G2}.
\begin{figure}[!ht]
        \centering
        \begin{subfigure}[b]{0.35\textwidth}
                \centering
                \includegraphics[width=\textwidth]{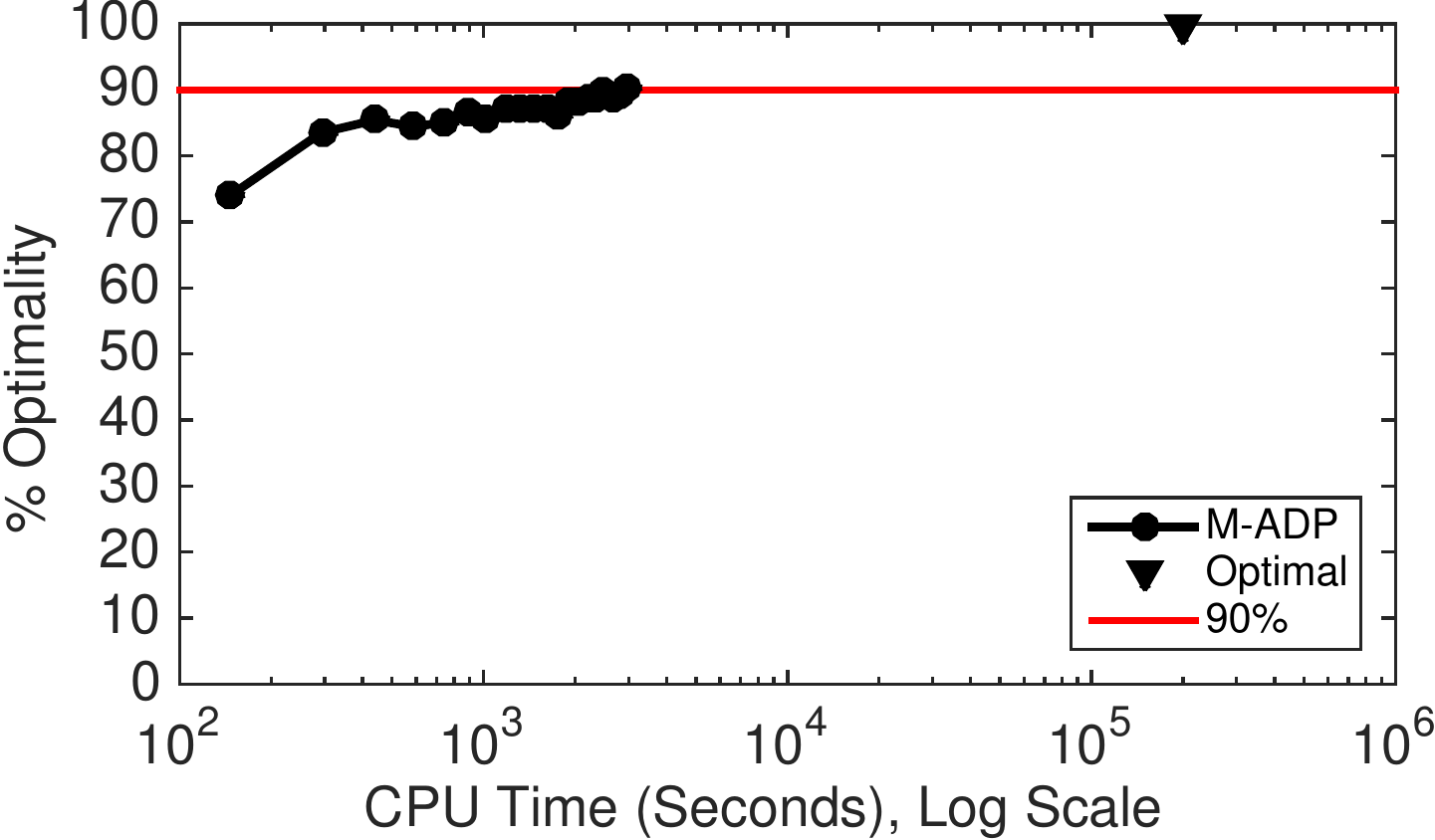}
                \caption{Instance \texttt{G1}}
        \end{subfigure}
        \begin{subfigure}[b]{0.35\textwidth}
                \centering
                \includegraphics[width=\textwidth]{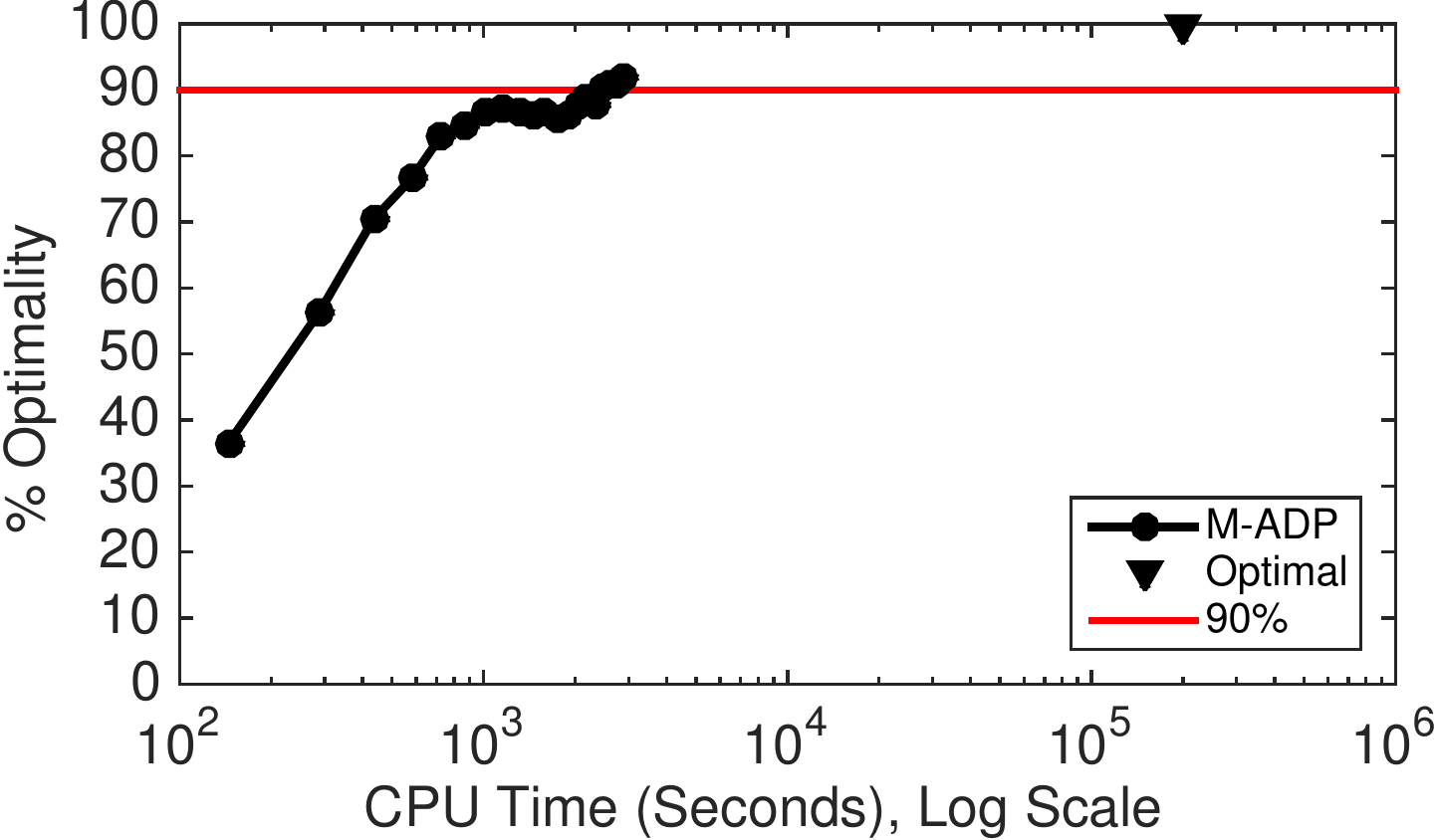}
                \caption{Instance \texttt{G2}}
        \end{subfigure}
        \caption{Computation Times (Seconds) of Monotone--ADP vs. Backward Dynamic Programming}
        \label{fig:Gcpu}
\end{figure}

\section{Conclusion}
\label{sec:conclusion}
In this paper, we formulated a general sequential decision problem with the property of a monotone value function. We then formally described an ADP algorithm, Monotone--ADP, first proposed in \cite{Papadaki2002} as a heuristic for one dimensional state variables, that exploits the structure of the value function by performing a monotonicity preserving operation at each iteration to increase the information gained from each random observation. The algorithm can be applied in the context of three common formulations of Bellman's equation, the pre--decision version, post--decision version and the $Q$--function (or state--action) version. Under several technical assumptions, we prove the almost sure convergence of the estimates produced by Monotone--ADP to the optimal value function. The proof draws upon techniques used in \cite{Tsitsiklis1994a} and \cite{Nascimento2009a}. However, in \cite{Nascimento2009a}, where concavity was assumed, pure exploitation could be used, but only in one dimension. This paper requires a full exploration policy, but exploits monotonicity in multiple dimensions, dramatically accelerating the rate of convergence. We then presented three example applications: regenerative optimal stopping, energy storage/allocation, and glycemic control for diabetes patients. Our empirical results show that in these problem domains, Monotone--ADP outperforms several popular ADP/RL algorithms (kernel--based reinforcement learning, approximate policy iteration, asynchronous value iteration, and $Q$--learning) by exploiting the additional structural information. Moreover, it can produce near--optimal policies using up to \emph{two orders of magnitude} less computational resources than backward dynamic programming. In an application where the optimal solution cannot be computed exactly due to a large state space, we expect that utilizing monotonicity can bring significant advantages. 
\clearpage
\appendix
\section{Proofs}
\label{sec:appendix}
\monocondone*
\begin{proof}
This is easily shown using backwards induction starting from the base case of $V^*_T$, which satisfies (\ref{monogen}) by definition. Consider two states $s = (m,i)$ and $s' = (m',i')$ with $s \preceq s'$ (note that this implies $i=i'$). Given that $V_{t+1}^*$ satisfies (\ref{monogen}), applying $(i)$, $(iii)$, and the monotonicity of the conditional expectation, we see that for any $a \in \mathcal A$,
\begin{align*}
\mathbf{E} \Bigl [  V^*_{t+1}\bigl(f(s,a,W_{t+1})\bigr)\,|\, S_t=s, \, a_t=a\Bigr] &= \mathbf{E} \Bigl [  V^*_{t+1}\bigl(f(s,a,W_{t+1})\bigr)\,|\, I_t=i, \, a_t=a \Bigr]\nonumber\\
&\le  \mathbf{E} \Bigl [  V^*_{t+1}\bigl(f(s',a,W_{t+1})\bigr)\,|\, I_t=i', \, a_t=a \Bigr]\nonumber\\
&= \mathbf{E} \Bigl [  V^*_{t+1}\bigl(f(s',a,W_{t+1})\bigr)\,|\, S_t=s', \, a_t=a \Bigr]. 
\end{align*}
Summing with the inequality in $(ii)$, we have for any $a \in \mathcal A$,
\[
C_t(s,a) + \mathbf{E} \bigl [  V^*_{t+1}(S_{t+1})\,|\, S_t=s, \, a_t=a \bigr] \le C_t(s',a) + \mathbf{E} \bigl [  V^*_{t+1}(S_{t+1})\,|\, S_t=s', \, a_t=a \bigr],
\]
and to complete the proof, we take a supremum over $\mathcal A$ to see that $V_t^*(s) \le V_t^*(s')$.
\end{proof}

\Hprops*
\begin{proof}
The following arguments apply to all three definitions of $H$. $(i)$ is true due to the monotonicity of the supremum (or maximum) and expectation operators. $(ii)$ follows by the definition of $H$ (for each of the three cases) and $(iii)$ follows from the well--known fact that our finite horizon MDP has a unique solution. $(iii)$ is easily deduced by applying the definition of $H$.
\end{proof}

\Propproj*
\begin{proof}
Let $\widebar{V}_t^\Pi = \Pi_M\bigl(S_t^n, z_t^n(S_t^n), \widebar{V}_t^{n-1}\bigr)$ and consider an arbitrary feasible solution $\tilde{V}_t \in V_\mathcal{M}\bigl(S_t^n,z_t^n(S_t^n)\bigr)$ to (\ref{prop:argmin}). To prove the statement of the proposition, we need to argue that
\[
\bigl \| \widebar{V}_t^\Pi - \widebar{V}_t^{n-1} \bigr \|_2 \le \bigl \| \tilde{V}_t - \widebar{V}_t^{n-1} \bigr \|_2.
\]
We do so by verifying that for each state $s \in \mathcal S$,
\begin{equation}
\bigl|\widebar{V}^\Pi_{t}(s)-\widebar{V}^{n-1}_{t}(s)\bigr|^2 \le \bigl|\tilde{V}_{t}(s)-\widebar{V}^{n-1}_{t}(s)\bigr|^2.
\label{toverify}
\end{equation}
There are four cases:
\begin{enumerate}
\item If $s$ and $S_t^n$ are incomparable (neither $s \preceq S_t^n$ nor $S_t^n \preceq s$) then monotonicity does not apply and $\widebar{V}_t^\Pi(s) = \widebar{V}_t^{n-1}(s)$, trivially satisfying (\ref{toverify}).
\item If $s = S_t^n$, then the definition of the feasible region gives $\widebar{V}_t^\Pi(s) = \tilde{V}_t(s)$. Once again, (\ref{toverify}) is satisfied.
\item Consider the case where $s \succeq S_t^n$ and $s \ne S_t^n$. First, if monotonicity is not violated with the new observation $z_t^n(S_t^n)$, then $\Pi_M$ does not alter the value at $s$. Therefore, $\widebar{V}_t^\Pi(s) = \widebar{V}_t^{n-1}(s)$ and (\ref{toverify}) holds. Now suppose monotonicity is violated, meaning that $\widebar{V}_{t}^{n-1}(s) \le z_t^n(S_t^n)$. After applying $\Pi_M$, we have that $\widebar{V}_{t}^\Pi(s) = z_t^n(S_t^n)$. Since $\tilde{V}_t$ is in the feasible set, it must be the case that $\tilde{V}_{t}(s) \ge z_t^n(S_t^n)$, since $\tilde{V}_t(S_t^n) = z_t^n(S_t^n)$. Combining these relationships, it is clear that (\ref{toverify}) holds.
\item The case where $s \preceq S_t^n$, and $s \ne S_t^n$ is handled in an analogous way.
\end{enumerate}
Since this holds for any feasible solution $\tilde{V}_t$, the proof is complete.
\end{proof}

\LemmaTwo*
\begin{proof}
The proof is by induction on $k$. We note that by definition and (\ref{monogen}), $U^0$ and $L^0$ satisfy this property. By the definition of $H$ and Assumption \ref{ass:Emono}, it is easy to see that if $U^k$ satisfies the property, then $HU^k$ does as well. Thus, by the definition of $U^{k+1}$ being the average of the two, we see that $U^{k+1}$ also satisfies the monotonicity property. 
\end{proof}
\notempty*
\begin{proof}
Consider $\mathcal S_t^-$ (the other case is symmetric). Since $\mathcal S$ is finite, there exists a state $\underline{s}$ such that there is no state $s' \in \mathcal S$ where $s' \preceq \underline{s}$. An increase from the projection operator must originate from a violation of monotonicity during an observation of a state $s'$ where $s' \preceq \underline{s}$ and $s' \ne \underline{s}$, but such a state does not exist. Thus, $\underline{s} \in \mathcal S_t^-$.
\end{proof}
\lowerimmediate*
\begin{proof}
Define:
\begin{equation*}
A = \{s'': s'' \preceq s,\, s'' \ne s \} \quad \mbox{and} \quad B = \! \!\! \!\bigcup_{s_l \in \mathcal S_L(s)} \{s'': s'' \preceq s_l\}.
\end{equation*}
We argue that $A \subseteq B$. Choose $s_1 \in A$ and suppose for the sake of contradiction that $s_1 \not\in B$. In particular, this means that $s_1 \not\in \mathcal S_L(s)$ because $\mathcal S_L(s) \subseteq B$. By Definition \ref{defn:lower}, it follows that there must exist $s_2$ such that $s_1 \preceq s_2 \preceq s$ where $s_2 \ne s_1$ and $s_2 \ne s$. It now follows that $s_2 \not\in \mathcal S_L(s)$ because if it were, then $s_1 \preceq s_2$ would imply that $s_1$ is an element of $B$. This argument can be repeated to produce other states $s_3, s_4, \ldots$, each \emph{different} from the rest, such that
\begin{equation}
s_1 \preceq s_2 \preceq s_3 \preceq s_4 \preceq \ldots \preceq s,
\label{chain}
\end{equation}
where each state $s_k$ is not an element $\mathcal S_L(s)$. However, because $\mathcal S$ is a finite set, eventually we reach a point where we cannot produce another state to satisfy Definition \ref{defn:lower} and we will have that the final state, call it $s_K$, is an element of $\mathcal S_L$. Here, we reach a contradiction because (\ref{chain}) (specifically the fact that $s_1 \preceq s_K$) implies that $s_1 \in B$. Thus, $s_1 \in B$ and we have shown that $A \subseteq B$.

Because the value of $s$ was increased, a violation of monotonicity must have occurred during the observation of $S_t^n$, implying that $S_t^n \in A$. Therefore, $S_t^n \in B$, and we know that $S_t^n \preceq s'$ for some $s' \in \mathcal S_L(s)$. Since $\widebar{V}_t^{n-1}$ is monotone over $\mathcal S$ and $\widebar{V}_t^n(s) = z_t^n(S_t^n)$, we can write
\begin{equation*}
\widebar{V}_t^{n-1}(S_t^n) \le \widebar{V}_t^{n-1}(s') \le \widebar{V}_t^{n-1}(s) < z_t^n(S_t^n) = \widebar{V}_t^n(s),
\end{equation*}
meaning that $\Pi_M$ acts on $s'$ and we have
\begin{equation*}
\widebar{V}_t^{n}(s')= z_t^n(S_t^n) = \widebar{V}_t^n(s),
\end{equation*}
the desired result.
\end{proof}
\alphaprod*
\begin{proof}We first notice that the product inside the limit is nonnegative because the stepsizes $\alpha_t^n \le 1$. Also the sequence is monotonic; therefore, the limit exists. Now,
\begin{equation*}
\begin{aligned}
\prod_{n=1}^m \bigl(1-\alpha_t^n(s)\bigr) = \exp \left[ \sum_{n = 1}^m \log \bigl(1-\alpha_t^n(s)\bigr)\right] \le \exp \left[ -\sum_{n=1}^m\alpha_t^n(s)  \right],
\end{aligned}
\end{equation*}
where the inequality follows from $\log(1-x) \le -x$. Since
$$\sum_{n=1}^\infty \alpha_t^n(s) = \infty \quad a.s.,$$
the result follows by appropriately taking limits.
\end{proof}
\XW*
\begin{proof}
First, we state an inequality needed later in the proof. Since $n \ge \tilde{N}_t^{k}(s)$, we know that $n \ge N_{t+1}^k$ by the induction hypothesis for $k$. Now by the induction hypothesis for $t+1$, we know that $\widebar{V}_{t+1}^n(s) \le U_{t+1}^k(s)$ holds. Therefore, using $(ii)$ of Lemma \ref{Hprops}, we see that
\begin{equation}
\bigl(H\widebar{V}^n\bigr)_t(s) \le \bigl(HU^k\bigr)_t(s).
\label{ineq:1}
\end{equation}
To show the statement of the lemma, we induct forwards on $n$.
\subsubsection*{Base case, $n=\tilde{N}_t^{k}(s)$} By the induction hypothesis for $k$ (which we can safely use in this proof because of the placement of the lemma after the induction hypothesis), we have that $\widebar{V}_t^{\tilde{N}_t^{k}(s)}(s) \le U_t^k(s)$. Combined with the fact that
\begin{equation*}
W_t^{\tilde{N}_t^{k}(s),\,\tilde{N}_t^{k}(s)}(s)=0 \textnormal{ and } U_t^k(s) = X_t^{\tilde{N}_t^{k}(s)} (s),
\end{equation*}
we see that the statement of the lemma holds for the base case.

\subsubsection*{Induction hypothesis, $n$} Suppose the statement of the lemma holds for $n$.

\subsubsection*{Inductive step from $n$ to $n+1$} Suppose $S_t^{n+1}=s$, meaning a direct update happened on iteration $n+1$. Thus, we have that
\begin{align}
\widebar{V}_t^{n+1}(s) &=  z_t^{n+1}(s) \nonumber\\
&= \bigl(1-\alpha_t^{n}(s)\bigr) \, \widebar{V}_t^n(s) + \alpha_t^{n}(s) \, \hat{v}_t^{n+1}(s) \nonumber \\
&= \bigl(1-\alpha_t^{n}(s)\bigr) \, \widebar{V}_t^n(s) + \alpha_t^{n}(s) \, \Bigl[\bigl(H\widebar{V}^n\bigr)_t(s) + w_t^{n+1}(s)\Bigr] \nonumber \\
&\le \bigl(1-\alpha_t^{n}(s)\bigr) \, \bigl(X_t^{n}(s) + W_t^{n,\tilde{N}_t^{k}(s)}(s)\bigr) + \alpha_t^{n}(s) \, \Bigl[\bigl(H\widebar{V}^n\bigr)_t(s) + w_t^{n+1}(s)\Bigr] \label{indhyp}\\
&\le \bigl(1-\alpha_t^{n}(s)\bigr) \, \bigl(X_t^{n}(s) + W_t^{n,\tilde{N}_t^{k}(s)}(s)\bigr) + \alpha_t^{n}(s) \, \Bigl[\bigl(HU^k\bigr)_t(s) + w_t^{n+1}(s)\Bigr] \label{ineq:2}\\
&= X_t^{n+1}(s) + W_t^{n+1,\tilde{N}_t^{k}(s)} (s).\nonumber
\end{align}
where (\ref{indhyp}) follows from the induction hypothesis for $n$, (\ref{ineq:2}) follows from (\ref{ineq:1}).
Now let us consider the second case, that $S_t^{n+1} \ne s$. This means that the stepsize $\alpha_t^{n+1}(s)=0$ and thus,
\begin{equation}
\begin{aligned}
X_t^{n+1}(s) &=X_t^{n}(s),\\
W_t^{n+1,\tilde{N}_t^{k}(s)}(s) &= W_t^{n,\tilde{N}_t^{k}(s)}(s).
\end{aligned}
\label{eq:1}
\end{equation}
Because $s \in \mathcal S_t^-$ and $n+1 \ge \tilde{N}_t^{k}(s) \ge N^\Pi$ (induction hypothesis for $k$), we know that the projection operator did not increase the value of $s$ on this iteration (though a decrease is possible). Hence,
\begin{equation*}
\widebar{V}_t^{n+1}(s) \le \widebar{V}_t^n(s) \le X_t^{n}(s) + W_t^{n,\tilde{N}_t^{k}(s)}(s) \le X_t^{n+1}(s) + W_t^{n+1,\tilde{N}_t^{k}(s)}(s),
\end{equation*}
by the induction hypothesis for $n$ and (\ref{eq:1}).
\end{proof}

\replmono*
\begin{proof}
The proof of the inductive step of Proposition \ref{mono_cond_one} can be used, but we must take care of a special case.
Consider any $t < T$ and any $V \in \mathbb R^D$ such that $V_{t+1}$ is monotone over $\mathcal S$. We wish to show that $(HV)_t(s) \le (HV)_t(s')$ for any two states $s \preceq s'$. If $s =(x,y)$ and $s' = (x',y')$ with $0<x \le x'$ and $y \le y'$, then the inductive step of the proof of Proposition \ref{mono_cond_one} can be applied. Similarly, if $x = x' = 0$, then automatic regeneration of the system happens in both cases and we can again apply the inductive step in the proof of Proposition \ref{mono_cond_one}.

However, we now notice that Part (i) of Proposition \ref{mono_cond_one} is not satisfied whenever we have $s=(0,y)$ and $s'=(x,y')$ for $y,\,y' \in \mathcal Y$ and $x \in \mathcal X$ with $x >0$ and $y \le y'$ (due to the automatic regeneration of the system after failure). When in state $s$, regardless of the action taken, the next state is always $(X_\textnormal{max},Y_\textnormal{max})$, where $Y_\textnormal{max} = (Y^i_\textnormal{max})_{i=1}^n$. The contribution that we incur is $-F-r(0,y)$. On the other hand, if we take action $a_t = 1$ at the current state of $s'$, the next state is also $(X_\textnormal{max},Y_\textnormal{max})$, but the contribution incurred is $P-r(x,y')$. As $r(\cdot,\cdot)$ is nonincreasing, we have that $P-r(x,y_2) \ge -F-r(0,y_1)$. Since there is an action that leads $s'$ to the same next state with a higher contribution, we have shown that $(HV)_t(s) \le (HV)_t(s')$.
\end{proof}
\storagemono*
\begin{proof}
Consider any $t < T$ and any $V \in \mathbb R^D$ such that $V_{t+1}$ is monotone over $\mathcal S$. Let $s = (r,e,p,d)$ and $s' = (r',e',p',d')$ be two states in $\mathcal S$ with $s \preceq s'$. The main difficulty we need to work around is the fact that the feasible sets $\mathcal X(s)$ and $\mathcal X(s')$ are different. The goal is to show $(HV)_t(s) \le (HV)_t(s')$, which we can rewrite as
\begin{equation}
\begin{aligned}
\max_{x \in \mathcal X(s)} \Bigl [ C_t(s,x) &+\textbf{E}\bigl[ V_{t+1}(S_{t+1}) \,| \, S_t=s, \, x_t=x \bigr] \Bigr ]\\ &\le 
\max_{x \in \mathcal X(s')} \Bigl [ C_t(s',x)+\textbf{E}\bigl[ V_{t+1}(S_{t+1}) \,| \, S_t=s', \, x_t=x \bigr] \Bigr ],
\label{eq:storage_ineq}
\end{aligned}
\end{equation}
and let $x^* = (\xstart[ed]\!,\xstart[md]\!,\xstart[rd]\!,\xstart[er]\!,\xstart[rm])^\mathsf{T}$ be an optimal solution to the left--hand--side maximization. There are now two cases. First, consider the case where $x^* \in \mathcal X(s')$. Since $x^*$ is an element of $\mathcal X(s)$, we know that by the constraints (\ref{eq:constraint1}) and (\ref{eq:constraint2}),
\[
d' + \xstart[rm] - \xstart[md] \ge d + \xstart[rm] - \xstart[md] \ge 0,
\]
which implies that $C_t(s,x^*) \le C_t(s',x^*)$. Now, using the fact that the transition function is monotone and the independence of $W_{t+1}$ from $S_t$, the same argument as in the proof of Proposition \ref{mono_cond_one} gives us 
\[
\textbf{E}\bigl[ V_{t+1}(S_{t+1}) \,| \, S_t=s, \, x_t=x^* \bigr] \le \textbf{E}\bigl[ V_{t+1}(S_{t+1}) \,| \, S_t=s', \, x_t=x^* \bigr].
\]
Therefore, we have a feasible solution that has an objective value at least as good as the optimal objective value from the left hand side, which is enough to conclude (\ref{eq:storage_ineq}). Next, we consider the case where $x^* \not\in \mathcal X(s')$. Since $s \preceq s'$, the only two possible violations by $x^*$ are constraints (\ref{eq:constraint2}) and (\ref{eq:constraint6}) in the definition of $\mathcal X(s')$. Let $\delta^d = d'-d$ and $\delta^r = r'-r$. It follows that the decision $\bar{x}$ defined by
\[\bar{x} = \bigl(\xstart[ed]\!,\,\xstart[md]+\delta^d\!,\,\xstart[rd]\!,\,\xstart[er]-\delta^r\!,\,\xstart[rm]\bigr)^\mathsf{T}\]
is in the feasible set $\mathcal X(s')$. In addition, we see that the choice of this specific $\bar{x}$ guarantees that $C_t(s,x^*) \le C_t(s',\bar{x})$ and $r + \phi^\mathsf{T} x^* \le r' + \phi^\mathsf{T} \bar{x}$. The monotonicity of (\ref{eq:storage_trans}) and the independence of $W_{t+1}$ from $S_t$ and $x_t$ allow us to conclude 
\[
\textbf{E}\bigl[ V_{t+1}(S_{t+1}) \,| \, S_t=s, \, x_t=x^* \bigr] \le \textbf{E}\bigl[ V_{t+1}(S_{t+1}) \,| \, S_t=s', \, x_t=\bar{x} \bigr].
\]
As in the previous case, the existence of $\bar{x}$ implies (\ref{eq:storage_ineq}).
\end{proof}

\clearpage
\bibliographystyle{abbrvnat}
\bibliography{/Users/drjiang/Documents/Dropbox/Princeton/Princeton_Research/Bibtex/Bib}

\begin{thebibliography}{53}
\providecommand{\natexlab}[1]{#1}
\providecommand{\url}[1]{\texttt{#1}}
\expandafter\ifx\csname urlstyle\endcsname\relax
  \providecommand{\doi}[1]{doi: #1}\else
  \providecommand{\doi}{doi: \begingroup \urlstyle{rm}\Url}\fi

\bibitem[Asamov and Powell(2015)]{Asamov2015}
T.~Asamov and W.~B. Powell.
\newblock {Regularized decomposition of high-dimensional multistage stochastic
  programs with Markov uncertainty}.
\newblock \emph{arXiv preprint arXiv:1505.02227}, 2015.

\bibitem[Ayer et~al.(1955)Ayer, Brunk, and Ewing]{Ayer1955}
M.~Ayer, H.~D. Brunk, and G.~M. Ewing.
\newblock {An empirical distribution function for sampling with incomplete
  information}.
\newblock \emph{The Annals of Mathematical Statistics}, 26\penalty0
  (4):\penalty0 641--647, 1955.

\bibitem[Barlow et~al.(1972)Barlow, Bartholomew, Bremner, and Brunk]{Barlow}
R.~E. Barlow, D.~J. Bartholomew, J.~M. Bremner, and H.~D. Brunk.
\newblock \emph{{Statistical inference under order restrictions: The theory and
  application of isotonic regression}}.
\newblock Wiley, New York, 1972.

\bibitem[Bertsekas(2007)]{Bertsekas2007}
D.~P. Bertsekas.
\newblock \emph{{Dynamic Programming and Optimal Control, Vol. II}}.
\newblock Athena Scientific, Belmont, MA, 4 edition, 2007.

\bibitem[Bertsekas(2011)]{Bertsekas2011}
D.~P. Bertsekas.
\newblock {Approximate policy iteration: A survey and some new methods}.
\newblock \emph{Journal of Control Theory and Applications}, 2011.

\bibitem[Bertsekas and Tsitsiklis(1996)]{Bertsekas1996}
D.~P. Bertsekas and J.~N. Tsitsiklis.
\newblock \emph{{Neuro--Dynamic Programming}}.
\newblock Athena Scientific, Belmont, MA, 1996.

\bibitem[Birge(1985)]{Birge1985}
J.~R. Birge.
\newblock {Decomposition and partitioning methods for multistage stochastic
  linear programs}, 1985.

\bibitem[Breiman(1992)]{Breiman1992}
L.~Breiman.
\newblock \emph{{Probability}}.
\newblock Society of Industrial and Applied Mathematics, Philadelphia, PA,
  1992.

\bibitem[Brunk(1955)]{Brunk1955}
H.~D. Brunk.
\newblock {Maximum likelihood estimates of monotone parameters}.
\newblock \emph{The Annals of Mathematical Statistics}, pages 607--616, 1955.

\bibitem[Carmona and Ludkovski(2010)]{Carmona2010}
R.~Carmona and M.~Ludkovski.
\newblock {Valuation of energy storage: An optimal switching approach}.
\newblock \emph{Quantitative Finance}, 10\penalty0 (4):\penalty0 359--374,
  2010.

\bibitem[Dette et~al.(2006)Dette, Neumeyer, and Pilz]{Dette2006}
H.~Dette, N.~Neumeyer, and K.~F. Pilz.
\newblock {A simple nonparametric estimator of a strictly monotone regression
  function}.
\newblock \emph{Bernoulli}, 12\penalty0 (3):\penalty0 469--490, 2006.

\bibitem[Ekstr\"{o}m(2004)]{Ekstrom2004}
E.~Ekstr\"{o}m.
\newblock {Properties of American option prices}.
\newblock \emph{Stochastic Processes and their Applications}, 114\penalty0
  (2):\penalty0 265--278, 2004.

\bibitem[Feldstein and Rothschild(1974)]{Feldstein1974}
M.~S. Feldstein and M.~Rothschild.
\newblock {Towards an economic theory of replacement investment}.
\newblock \emph{Econometrica: Journal of the Econometric Society}, 42\penalty0
  (3):\penalty0 393--424, 1974.

\bibitem[George and Powell(2006)]{George2006}
A.~P. George and W.~B. Powell.
\newblock {Adaptive stepsizes for recursive estimation with applications in
  approximate dynamic programming}.
\newblock \emph{Machine Learning}, 65\penalty0 (1):\penalty0 167--198, 2006.

\bibitem[Hsih(2010)]{Hsih2010}
K.~W. Hsih.
\newblock \emph{{Optimal Dosing Applied to Glycemic Control of Type 2
  Diabetes}}.
\newblock Senior thesis, Princeton University, 2010.

\bibitem[Jiang and Powell(2015)]{Jiang2013a}
D.~R. Jiang and W.~B. Powell.
\newblock {Optimal hour-ahead bidding in the real-time electricity market with
  battery storage using approximate dynamic programming}.
\newblock \emph{INFORMS Journal on Computing}, 27\penalty0 (3):\penalty0
  525--543, 2015.

\bibitem[Kaplan and Violante(2014)]{Kaplan2014}
G.~Kaplan and G.~L. Violante.
\newblock {A model of the consumption response to fiscal stimulus payments}.
\newblock \emph{Econometrica}, 82\penalty0 (4):\penalty0 1199--1239, 2014.

\bibitem[Kim and Powell(2011)]{Kim2011}
J.~H. Kim and W.~B. Powell.
\newblock {Optimal energy commitments with storage and intermittent supply}.
\newblock \emph{Operations Research}, 59\penalty0 (6):\penalty0 1347--1360,
  2011.

\bibitem[Kleywegt et~al.(2002)Kleywegt, Shapiro, and Homem-de
  Mello]{Kleywegt2002}
A.~J. Kleywegt, A.~Shapiro, and T.~Homem-de Mello.
\newblock {The sample average approximation method for stochastic discrete
  optimization}.
\newblock \emph{SIAM Journal on Optimization}, 12\penalty0 (2):\penalty0
  479--502, 2002.

\bibitem[Kurt and Kharoufeh(2010)]{Kurt2010a}
M.~Kurt and J.~P. Kharoufeh.
\newblock {Monotone optimal replacement policies for a Markovian deteriorating
  system in a controllable environment}.
\newblock \emph{Operations Research Letters}, pages 1--17, 2010.

\bibitem[Kurt et~al.(2011)Kurt, Denton, Schaefer, Shah, and Smith]{Kurt2011}
M.~Kurt, B.~T. Denton, A.~J. Schaefer, N.~D. Shah, and S.~A. Smith.
\newblock {The structure of optimal statin initiation policies for patients
  with type 2 diabetes}.
\newblock \emph{IIE Transactions on Healthcare Systems Engineering}, 1\penalty0
  (1):\penalty0 49--65, 2011.

\bibitem[Luenberger(1998)]{Luenberger1998}
D.~G. Luenberger.
\newblock \emph{{Investment Science}}.
\newblock Oxford University Press, New York, 1998.

\bibitem[Mammen(1991)]{Mammen1991}
E.~Mammen.
\newblock {Estimating a smooth monotone regression function}.
\newblock \emph{The Annals of Statistics}, 19\penalty0 (2):\penalty0 724--740,
  1991.

\bibitem[Mason et~al.(2012)Mason, England, Denton, Smith, Kurt, and
  Shah]{Mason2012}
J.~E. Mason, D.~A. England, B.~T. Denton, S.~A. Smith, M.~Kurt, and N.~D. Shah.
\newblock {Optimizing statin treatment decisions for diabetes patients in the
  presence of uncertain future adherence}.
\newblock \emph{Medical Decision Making}, 32\penalty0 (1):\penalty0 154--166,
  2012.

\bibitem[Mason et~al.(2014)Mason, Denton, Shah, and Smith]{Mason2014}
J.~E. Mason, B.~T. Denton, N.~D. Shah, and S.~A. Smith.
\newblock {Optimizing the simultaneous management of blood pressure and
  cholesterol for type 2 diabetes patients}.
\newblock \emph{European Journal of Operational Research}, 233\penalty0
  (3):\penalty0 727--738, 2014.

\bibitem[McCall(1970)]{McCall1970}
J.~J. McCall.
\newblock {Economics of information and job search}.
\newblock \emph{The Quarterly Journal of Economics}, 84\penalty0 (1):\penalty0
  113--126, 1970.

\bibitem[Mukerjee(1988)]{Mukerjee1988}
H.~Mukerjee.
\newblock {Monotone Nonparametric Regression}.
\newblock \emph{The Annals of Statistics}, 16\penalty0 (2):\penalty0 741--750,
  1988.

\bibitem[M\"{u}ller(1997)]{Muller1997}
A.~M\"{u}ller.
\newblock {How does the value function of a Markov decision process depend on
  the transition probabilities?}
\newblock \emph{Mathematics of Operations Research}, 22\penalty0 (4):\penalty0
  872--885, 1997.

\bibitem[Nadaraya(1964)]{Nadaraya1964}
E.~A. Nadaraya.
\newblock {On estimating regression}.
\newblock \emph{Theory of Probability \& Its Applications}, 9\penalty0
  (1):\penalty0 141--142, 1964.

\bibitem[Nascimento and Powell(2009)]{Nascimento2009a}
J.~M. Nascimento and W.~B. Powell.
\newblock {An optimal approximate dynamic programming algorithm for the lagged
  asset acquisition problem}.
\newblock \emph{Mathematics of Operations Research}, 34\penalty0 (1):\penalty0
  210--237, 2009.

\bibitem[Nascimento and Powell(2010)]{Nascimento2010}
J.~M. Nascimento and W.~B. Powell.
\newblock {Dynamic programming models and algorithms for the mutual fund cash
  balance problem}.
\newblock \emph{Management Science}, 56\penalty0 (5):\penalty0 801--815, 2010.

\bibitem[Ormoneit and Sen(2002)]{Ormoneit2002}
D.~Ormoneit and A.~Sen.
\newblock {Kernel-based reinforcement learning}.
\newblock \emph{Machine Learning}, 49\penalty0 (2-3):\penalty0 161--178, 2002.

\bibitem[Papadaki and Powell(2002)]{Papadaki2002}
K.~P. Papadaki and W.~B. Powell.
\newblock {Exploiting structure in adaptive dynamic programming algorithms for
  a stochastic batch service problem}.
\newblock \emph{European Journal of Operational Research}, 142\penalty0
  (1):\penalty0 108--127, 2002.

\bibitem[Papadaki and Powell(2003{\natexlab{a}})]{Papadaki}
K.~P. Papadaki and W.~B. Powell.
\newblock {A discrete online monotone estimation algorithm}.
\newblock \emph{Operational Research Working Papers, LSEOR 03.73},
  2003{\natexlab{a}}.

\bibitem[Papadaki and Powell(2003{\natexlab{b}})]{Papadaki2003}
K.~P. Papadaki and W.~B. Powell.
\newblock {An adaptive dynamic programming algorithm for a stochastic
  multiproduct batch dispatch problem}.
\newblock \emph{Naval Research Logistics}, 50\penalty0 (7):\penalty0 742--769,
  2003{\natexlab{b}}.

\bibitem[Pereira and Pinto(1991)]{Pereira1991a}
M.~V.~F. Pereira and L.~M. V.~G. Pinto.
\newblock {Multi-stage stochastic optimization applied to energy planning}.
\newblock \emph{Mathematical Programming}, 52\penalty0 (1-3):\penalty0
  359--375, 1991.

\bibitem[Pierskalla and Voelker(1976)]{Pierskalla1976}
W.~P. Pierskalla and J.~A. Voelker.
\newblock {A survey of maintenance models: the control and surveillance of
  deteriorating systems}.
\newblock \emph{Naval Research Logistics Quarterly}, 23\penalty0 (3):\penalty0
  353--388, 1976.

\bibitem[Powell(2011)]{Powell2011}
W.~B. Powell.
\newblock \emph{{Approximate Dynamic Programming: Solving the Curses of
  Dimensionality}}.
\newblock Wiley, 2nd edition, 2011.

\bibitem[Powell et~al.(2004)Powell, Ruszczynski, and Topaloglu]{Powell2004}
W.~B. Powell, A.~Ruszczynski, and H.~Topaloglu.
\newblock {Learning algorithms for separable approximations of discrete
  stochastic optimization problems}.
\newblock \emph{Mathematics of Operations Research}, 29\penalty0 (4):\penalty0
  814--836, 2004.

\bibitem[Puterman(1994)]{Puterman}
M.~L. Puterman.
\newblock \emph{{Markov Decision Processes: Discrete Stochastic Dynamic
  Programming}}.
\newblock Wiley, New York, 1994.

\bibitem[Ramsay(1998)]{Ramsay1998}
J.~O. Ramsay.
\newblock {Estimating smooth monotone functions}.
\newblock \emph{Journal of the Royal Statistical Society: Series B (Statistical
  Methodology)}, 60\penalty0 (2):\penalty0 365--375, 1998.

\bibitem[Ross(1983)]{Ross1983}
S.~M. Ross.
\newblock \emph{{Introduction to Stochastic Dynamic Programming}}.
\newblock Academic Press, New York, 1983.

\bibitem[Rust(1987)]{Rust1987}
J.~Rust.
\newblock {Optimal replacement of GMC bus engines: An empirical model of Harold
  Zurcher}.
\newblock \emph{Econometrica: Journal of the Econometric Society}, 55\penalty0
  (5):\penalty0 999--1033, 1987.

\bibitem[Salas and Powell(2013)]{Salas2013}
D.~Salas and W.~B. Powell.
\newblock {Benchmarking a scalable approximation dynamic programming algorithm
  for stochastic control of multidimensional energy storage problems}.
\newblock \emph{(working paper)}, 2013.

\bibitem[Scott(2009)]{Scott1992}
D.~W. Scott.
\newblock \emph{{Multivariate Density Estimation: Theory, Practice, and
  Visualization}}, volume 383.
\newblock 2009.

\bibitem[Scott and Powell(2012)]{Scott2012}
W.~Scott and W.~B. Powell.
\newblock {Approximate dynamic programming for energy storage with new results
  on instrumental variables and projected Bellman errors}.
\newblock \emph{(working paper)}, 2012.

\bibitem[Secomandi(2010)]{Secomandi2010}
N.~Secomandi.
\newblock {Optimal commodity trading with a capacitated storage asset}.
\newblock \emph{Management Science}, 56\penalty0 (3):\penalty0 449--467, 2010.

\bibitem[Smith and McCardle(2002)]{Smith2002}
J.~E. Smith and K.~F. McCardle.
\newblock {Structural properties of stochastic dynamic programs}.
\newblock \emph{Operations Research}, 50\penalty0 (5):\penalty0 796--809, 2002.

\bibitem[Stockey and {Lucas, Jr.}(1989)]{Stockey1989}
N.~Stockey and R.~E. {Lucas, Jr.}
\newblock \emph{{Recursive Methods in Economic Dynamics}}.
\newblock Harvard University Press, Cambridge, Massachusetts and London,
  England, 1989.

\bibitem[Sutton and Barto(1998)]{Sutton1998}
R.~Sutton and A.~Barto.
\newblock \emph{{Reinforcement Learning: An Introduction}}.
\newblock 1998.

\bibitem[Topaloglu and Powell(2003)]{Topaloglu2003}
H.~Topaloglu and W.~B. Powell.
\newblock {An algorithm for approximating piecewise linear concave functions
  from sample gradients}.
\newblock \emph{Operations Research Letters}, 31\penalty0 (1):\penalty0 66--76,
  2003.

\bibitem[Tsitsiklis(1994)]{Tsitsiklis1994a}
J.~N. Tsitsiklis.
\newblock {Asynchronous stochastic approximation and Q-learning}.
\newblock \emph{Machine Learning}, 16\penalty0 (3):\penalty0 185--202, 1994.

\bibitem[Watkins and Dayan(1992)]{Watkins1992}
C.~J. Watkins and P.~Dayan.
\newblock {Q-learning}.
\newblock \emph{Machine Learning}, 8\penalty0 (3-4):\penalty0 279--292, 1992.

\end{thebibliography}
\end{document}